\newcommand{\eps}{\varepsilon}
\newcommand{\paren}[1]{\left (#1\right )}
\newcommand{\argmin}{\mathop{\rm argmin}}
\newcommand{\mcal}[1]{\mathcal{#1}}
\newcommand{\dv}{\partial}
\newcommand{\gradi}{\nabla}
\newcommand{\R}{\mathbb{R}}
\newcommand{\Z}{\mathbb{Z}}
\newcommand{\N}{\mathbb{N}}
\newcommand{\T}{\mathcal{T}}
\newcommand{\PP}{\mathcal{P}}
\newcommand{\M}{\mathcal{M}}
\newcommand{\E}{\mathcal{E}}
\newcommand{\U}{\mathbb{U}}
\newcommand{\W}{\mathbb{W}}
\newcommand{\udd}{u^{\sharp}}
\newcommand{\tdd}{\tau^{\sharp}}
\newcommand{\pidd}{\pi^{\sharp}}
\newcommand{\Me}{\mathcal{M}^{*}}
\newcommand{\vect}[1]{\mathbb{#1}}
\renewcommand{\r}{\mcal{R}}
\newcommand{\w}{\mcal{W}}
\newcommand{\bfa}{{\bf a}}
\newcommand{\bigzero}{\mbox{\normalfont\Large\bfseries 0}}
\newcommand{\rvline}{\hspace*{-\arraycolsep}\vline\hspace*{-\arraycolsep}}
\newcommand{\dx}{\,{\rm d}x}
\def\Ind{\mathds{1}}
\newtheorem{theorem}{Theorem}[section]
\newtheorem{prop}[theorem]{Proposition}
\newtheorem{definition}{Definition}[section]
\newtheorem{remark}{Remark}[section]
\title{\textbf{\Large
A RELAXATION SCHEME FOR A HYPERBOLIC MULTIPHASE FLOW MODEL. PART I: BAROTROPIC EOS}}
\author{Khaled Saleh$^{1}$}
\date{}
\begin{document}

\maketitle

{\centering
${}^{1}$ Universit\'e de Lyon, CNRS UMR 5208, Universit\'e Lyon 1, Institut Camille Jordan, 43 bd 11 novembre 1918; F-69622 Villeurbanne cedex, France.\\
}

\begin{abstract}
This article is the first of two in which we develop a relaxation finite volume scheme for the convective part of the multiphase flow models introduced in the series of papers \cite{her-16-cla,Herard,bou-18-rel}. In the present article we focus on barotropic flows where in each phase the pressure is a given function of the density. The case of general equations of state will be the purpose of the second article. We show how it is possible to extend the relaxation scheme designed in \cite{coq-13-rob} for the barotropic Baer-Nunziato two phase flow model to the multiphase flow model with $N$ - where $N$ is arbitrarily large - phases. The obtained scheme inherits the main properties of the relaxation scheme designed for the Baer-Nunziato two phase flow model. It applies to general barotropic equations of state. It is able to cope with arbitrarily small values of the statistical phase fractions. The approximated phase fractions and phase densities are proven to remain positive and a fully discrete energy inequality is also proven under a classical CFL condition.
For $N=3$, the relaxation scheme is compared with Rusanov's scheme, which is the only numerical scheme presently available for the three phase flow model (see \cite{bou-18-rel}).
For the same level of refinement, the relaxation scheme is shown to be much more accurate than Rusanov's scheme, and for a given level of approximation error, the relaxation scheme is shown to perform much better in terms of computational cost than Rusanov's scheme.
Moreover, contrary to Rusanov's scheme which develops strong oscillations when approximating vanishing phase solutions, the numerical results show that the relaxation scheme remains stable in such regimes.
\end{abstract}

\noindent {\bf Keywords:}
Multiphase flows, Compressible flows, Hyperbolic PDEs, Entropy-satisfying methods, Relaxation techniques, Riemann problem, Riemann solvers, Godunov-type schemes, Finite volumes.

\medskip
\noindent {\bf Mathematics Subject Classification:} 76T30, 76T10, 76M12, 65M08, 35L60, 35Q35, 35F55.

\section{Introduction}\label{sec:intro}

A multiphase flow is a flow involving the simultaneous presence of materials with different states or phases (for instance gas-liquid-solid mixtures) or materials in the same state or phase but with different chemical properties (for instance non miscible liquid-liquid mixtures).
The modeling and numerical simulation of multiphase flows is a relevant approach for a detailed investigation of some patterns occurring in many industrial sectors. In the oil and petroleum industry for instance, multiphase flow modeling is needed for the understanding of pipe flows where non miscible oil, liquid water and gas and possibly solid particles are involved. In the chemical industry some synthesis processes are based on three phase chemical reactors. In the metalworking industry, some cooling processes involve multiphase flows. 

\medskip
In the nuclear industry, many applications involve multiphase flows such as accidental configurations that may arise in pressurized water reactors, among which the Departure from Nucleate Boiling (DNB) \cite{dnb}, the Loss
of Coolant Accident (LOCA) \cite{loca}, the re-flooding phase following a LOCA or the Reactivity Initiated Accident (RIA) \cite{ria}, that all involve two phase liquid-vapor flows. Other accidental configurations involve three phase flows such as the steam explosion, a phenomenon consisting in violent boiling or flashing of water into steam, occurring when the water is in contact with hot molten metal particles of ``corium'': a liquid mixture of nuclear fuel, fission products, control rods, structural materials, etc.. resulting from a core meltdown. The corium is fragmented in droplets the order of magnitude of which is 100$\mu m$. This allows a very rapid heat transfer to the surrounding water in a time less than the characteristic time of the pressure relaxation associated with water evaporation, hence a dramatic increase of pressure and the possible apparition of shock and rarefaction pressure waves that may also damage the reactor structure and cause a containment failure. The passage of the pressure wave through the pre-dispersed metal creates flow forces which further fragment the melt, increasing the interfacial area between corium droplets and liquid water, hence resulting in rapid heat transfer, and thus sustaining the process. We refer the reader to \cite{ber-00-vap} and the references therein in order to have a better understanding of that problem, and also to the recent paper \cite{mei-14-cha}.

\medskip
The modeling and numerical simulation of the steam explosion is an open topic up to now. Since the sudden increase of vapor concentration results in huge pressure waves including shock and rarefaction waves, compressible multiphase flow models with unique jump conditions and for which the initial-value problem is well posed are mandatory. Some modeling efforts have been provided in this direction in \cite{her-16-cla,Herard,bou-18-rel,Muller-Hantke}. The $N$-phase flow models developed therein consist in an extension to $N\geq 3$ phases of the well-known Baer-Nunziato two phase flow model \cite{bae-86-two}. They consist in $N$ sets of partial differential equations (PDEs) accounting for the evolution of phase fraction, density, velocity and energy of each phase. As in the Baer-Nunziato model, the PDEs are composed of a hyperbolic first order convective part consisting in $N$ Euler-like systems coupled through non-conservative terms and zero-th order source terms accounting for pressure, velocity and temperature relaxation phenomena between the phases. It is worth noting that the latter models are quite similar to the classical two phase flow models in \cite{bo-08-com,bdz-99-two,gav-02-mat}. We emphasize that the models considered here are only suitable for non miscible fluids. We refer to \cite{mat-18-the,mat-19-thr} and the references therein for the modeling of flows involving miscible fluids such as gas-gas mixtures.

\medskip
The existing approach to approximate the admissible weak solutions of these models consists in a fractional step method that treats separately convective effects and relaxation source terms. The present work is only concerned with the numerical treatment of the convective part. For the numerical treatment of the relaxation source terms in the framework of barotropic equations of state, we refer the reader to \cite{bou-18-rel}. Up to now, Rusanov's scheme is the only numerical scheme available for the simulation of the convective part of the considered multiphase flow model (see \cite{bou-18-rel}). Rusanov's scheme is well known for its simplicity but also for its poor accuracy due to the very large associated numerical viscosity. Another drawback of Rusanov's scheme observed when simulating two phase flows is its lack of robustness in the regimes of vanishing phases occurring when one (or more) phase is residual and the associated phase fraction is close to zero. Since accuracy and robustness are critical for the reliable simulation of 2D and 3D phenomena arising in multiphase flows such as the steam explosion, one must develop dedicated Riemann solver for these models.

\medskip
The aim of this work is to develop a relaxation finite volume scheme for the barotropic multiphase flow model introduced in \cite{her-16-cla}. In particular, we show how it is possible to extend the relaxation scheme designed in \cite{coq-13-rob} for the barotropic Baer-Nunziato two phase flow model to the multiphase flow model with $N$ - where $N$ is arbitrarily large - phases. The obtained scheme inherits the main properties of the relaxation scheme designed for the Baer-Nunziato model. Since it is a Suliciu type relaxation scheme, it applies to any barotropic equation of state, provided that the pressure is an increasing function of the density (see \cite{bou-04-non,Suliciu1,Suliciu2}). The scheme is able to cope with arbitrarily small values of the statistical phase fractions, which are proven to remain positive as well as the phase densities. Finally, a fully discrete energy inequality is also proven under a classical CFL condition.

\medskip
In \cite{coq-17-pos}, the relaxation scheme has shown to compare well with two of the most popular existing schemes available for the full Baer-Nunziato model (with energy equations), namely Schwendeman-Wahle-Kapila's Godunov-type scheme \cite{SWK} and Tokareva-Toro's HLLC scheme \cite{TT}. Still for the Baer-Nunziato model, the relaxation scheme also has shown a higher precision and a lower computational cost (for comparable accuracy) than Rusanov's scheme.
Regarding the multiphase flow model considered in the present paper, the relaxation scheme is compared to Rusanov's scheme, the only scheme presently available.
As expected, for the same level of refinement, the relaxation scheme is shown to be much more accurate than Rusanov's scheme, and for a given level of approximation error, the relaxation scheme is shown to perform much better in terms of computational cost than Rusanov's scheme.
Moreover, the numerical results show that the relaxation scheme is much more stable than Rusanov's scheme which develops strong oscillations in vanishing phase regimes.

\medskip
The relaxation scheme described here is restricted to the simulations of flows with subsonic relative speeds, \emph{i.e.} flows for which the difference between the material velocities of the phases is less than the monophasic speeds of sound. For multiphase flow simulations in the nuclear industry context, this is not a restriction, but it would be interesting though to extend the present scheme to sonic and supersonic flows.

\medskip
For the sake of concision and simplicity, this work is only concerned with barotropic equations of states. However, as it was done for the two phase Baer-Nunziato model in \cite{coq-17-pos}, an extension of the relaxation scheme to the full multiphase flow model with energy equations is within easy reach. This will be the purpose of a companion paper.

\medskip
The paper is organized as follows. Section \ref{sec:model} is devoted to the presentation of the multiphase flow model. In Section \ref{sec:riemsolv} we explain how to extend the relaxation Riemann solver designed in \cite{coq-13-rob} to the multiphase flow model and Section \ref{secnumtest} is devoted to the numerical applications on the three phase flow model. In addition to a convergence and CPU cost study in Test-case 1, we simulate in Test-cases 2 and 3 vanishing phase configurations where two of the three phases have nearly disappeared in some space region. In particular, Test-case 3 is dedicated to the interaction of a gas shock wave with a lid of rigid particles.

\section{The multiphase flow model}\label{sec:model}

We consider the following system of partial differential equations (PDEs) introduced in \cite{her-16-cla} for the modeling of the evolution of $N$ distinct compressible phases in a one dimensional space: for $k=1,..,N$, $x\in\R$ and $t>0$:  
\begin{subequations}
\label{sys:multi0}
\begin{align}
\label{sys:multi:void}
& \dv_t \alpha_k + \mathscr{V}_I(\U)\dv_x \alpha_k = 0,  \\
\label{sys:multi:mass}
& \dv_t \paren{\alpha_k \rho_k} + \dv_x \paren{\alpha_k \rho_k u_k}= 0,  \\
\label{sys:multi:mom}
& \textstyle \dv_t \paren{\alpha_k \rho_k u_k} + \dv_x \paren{\alpha_k \rho_k u_k^2+\alpha_k p_k(\rho_k)}+\sum_{\substack{l=1 \\ l\neq k}}^N \mathscr{P}_{kl}(\U)\dv_x \alpha_l= 0.
\end{align}
\end{subequations}

The model consists in $N$ coupled Euler-type systems.
The quantities $\alpha_k$, $\rho_k$ and $u_k$ represent the mean statistical fraction, the mean density and the mean velocity in phase $k$ (for $k=1,..,N$). The quantity $p_k$ is the pressure in phase $k$. We assume barotropic pressure laws for each phase so that the pressure is a given function of the density $\rho_k \mapsto p_k(\rho_k)$ with the classical assumption that $p_k'(\rho_k)>0$. The mean statistical fractions and the mean densities are positive and the following saturation constraint holds everywhere at every time: $\sum_{k=1}^N \alpha_k =1$.
Thus, among the $N$ equations \eqref{sys:multi:void}, $N-1$ are independent and the main unknown $\U$ is expected to belong to the physical space:
\begin{multline*}
\Omega_{\U}=
\Big \lbrace 
\U=\paren{\alpha_1,..,\alpha_{N-1},\alpha_1\rho_1,..,\alpha_N\rho_N,\alpha_1\rho_1 u_1,..,\alpha_N\rho_N u_N}^T\in\R^{3N-1}, \\
\text{such that} \ 0 < \alpha_1,..,\alpha_{N-1} < 1 \ \text{and} \ \alpha_k\rho_k >0 \ \text{for all} \ k=1,..,N  
\Big \rbrace.
\end{multline*}
Following \cite{her-16-cla}, several closure laws can be given for the so-called interface velocity $\mathscr{V}_I(\U)$ and interface pressures $\mathscr{P}_{kl}(\U)$. Throughout the whole paper, we make the following choice :
\begin{equation}
 \label{multi:vi:pi}
\mathscr{V}_I(\U) = u_1, \quad \text{and} \quad
\left \lbrace 
 \begin{array}{lll}
  \text{for $k=1$}, \quad & \mathscr{P}_{1l}(\U) = p_l(\rho_l), \quad & \text{for $l=2,..,N$}\\
  \text{for $k\neq 1$}, \quad & \mathscr{P}_{kl}(\U) = p_k(\rho_k),  \quad& \text{for $l=1,..,N, \, l\neq k$}.
 \end{array}
\right.
\end{equation}

With this particular choice, observing that the saturation constraint gives $\sum_{l=1,l\neq k}^N\dv_x\alpha_l=-\dv_x\alpha_k$ for all $k=1,..,N$ the momentum equations \eqref{sys:multi:mom} can be simplified as follows:
\begin{align}
\label{multi:mom1} &
 \textstyle \dv_t \paren{ \alpha_1\rho_1 u_1} + \dv_x  \paren{\alpha_1\rho_1 u_1^2+ \alpha_1 p_1(\rho_1)} + \sum_{l=2}^N  p_l(\rho_l)  \dv_x \alpha_l =0, \\[2ex]
\label{multi:momk} & \dv_t \paren{ \alpha_k\rho_k u_k} + \dv_x  \paren{\alpha_k\rho_k u_k^2+ \alpha_k p_k(\rho_k)} - p_k(\rho_k)  \dv_x \alpha_k =0, & k=2,..,N.
\end{align}

\medskip
\begin{remark}
When $N=2$, system \eqref{sys:multi0} is the convective part of the Baer-Nunziato two phase flow model \cite{bae-86-two}.
This model is thus an extension of the Baer-Nunziato two phase flow model to $N$ (possibly $\geq 3$) phases. 
As for the Baer-Nunziato model, in the areas where all the statistical fractions $\alpha_k$ are constant in space, system \eqref{sys:multi0} consists in $N$ independent Euler systems weighted by the statistical fraction of the corresponding phase. These Euler systems are coupled through non-conservative terms which are active only in the areas where the statistical fractions gradients are non zero. 
\end{remark}

\begin{remark}
The choice $\mathscr{V}_I(\U) = u_1$ is classical for the two phase flow model when phase 1 is dispersed and phase 2 prevails in the fluid. It is then natural to take an interfacial velocity which is equal to the material velocity of the dispersed phase. For three phase flows, the choice $\mathscr{V}_I(\U) = u_1$ has been made in \cite{bou-18-rel}. When simulating the steam explosion phenomenon \cite{ber-00-vap,mei-14-cha}, it corresponds to taking an interfacial velocity equal to the material velocity of the corium particles. 
\end{remark}

\medskip
The following proposition characterizes the wave structure of system \eqref{sys:multi0}:
\begin{prop}
\label{prop:spectre:multi}
With the closure laws \eqref{multi:vi:pi}, system \eqref{sys:multi0} is weakly hyperbolic on $\Omega_{\U}$ : it admits the following $3N-1$ real eigenvalues: $\sigma_1(\U) =..=\sigma_{N-1}(\U) =u_1$, $\sigma_{N-1+k}(\U) =u_k-c_k(\rho_k)$ for $k=1,..,N$ and $\sigma_{2N-1+k}(\U) =u_k+c_k(\rho_k)$ for $k=1,..,N$, 
where $c_k(\rho_k) = \sqrt{p_k'(\rho_k)}$. The corresponding right eigenvectors are linearly independent if, and only if,
\begin{equation}
\label{hypfail}
\alpha_k \neq 0, \quad \forall k=1,..,N \qquad \text{and} \qquad |u_1-u_k| \neq c_k(\rho_k), \quad \forall k=2,..,N.
\end{equation}
The characteristic field associated with $\sigma_1(\U) ,..,\sigma_{N-1}(\U) $ is linearly degenerate while the characteristic fields associated with $\sigma_{N-1+k}(\U) $ and $\sigma_{2N-1+k}(\U) $ for $k=1,..,N$ are genuinely non-linear.
\end{prop}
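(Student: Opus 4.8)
The plan is to pass to a set of primitive variables in which the nonconservative products acquire a transparent block structure, read off the spectrum from a factorized characteristic polynomial, and then analyze the eigenvectors and the nature of each field directly. Since the map $\U \mapsto \mathbf{W} := (\alpha_1,\dots,\alpha_{N-1},\rho_1,\dots,\rho_N,u_1,\dots,u_N)$ is a smooth diffeomorphism on $\Omega_{\U}$ (where $\rho_k>0$ and hence $\alpha_k>0$), and since the eigenvalues, the linear (in)dependence of the right eigenvectors, and the quantities $\nabla\sigma_i\cdot r_i$ governing the nature of the fields are all invariant under such a change of variables, it suffices to establish the proposition for the quasilinear system satisfied by $\mathbf{W}$.

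First I would derive this quasilinear form $\partial_t\mathbf{W}+B(\mathbf{W})\partial_x\mathbf{W}=0$. Expanding the conservative derivatives in \eqref{sys:multi:mass} and in the simplified momentum equations \eqref{multi:mom1}--\eqref{multi:momk}, and eliminating $\partial_t\alpha_k$ and $\partial_t(\alpha_k\rho_k)$ by means of \eqref{sys:multi:void} and \eqref{sys:multi:mass}, one obtains for each phase a density equation $\partial_t\rho_k+u_k\partial_x\rho_k+\rho_k\partial_x u_k+\tfrac{\rho_k(u_k-u_1)}{\alpha_k}\partial_x\alpha_k=0$ and -- this is the key simplification produced by the closure \eqref{multi:vi:pi} -- a velocity equation $\partial_t u_k+u_k\partial_x u_k+\tfrac{c_k^2}{\rho_k}\partial_x\rho_k=0$ that is free of any $\partial_x\alpha$ coupling for every $k\geq2$ (for $k=1$ a term $\sum_{l\geq2}\tfrac{p_l-p_1}{\alpha_1\rho_1}\partial_x\alpha_l$ survives, while the phase-$1$ density equation has no $\alpha$-coupling because $u_1-u_1=0$). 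Ordering the unknowns as the $\alpha$-block followed by the $(\rho,u)$-block, the matrix $B$ is block lower triangular: its upper-left block is $u_1 I_{N-1}$, and after regrouping each pair $(\rho_k,u_k)$ the $(\rho,u)$-part is block diagonal with the $2\times2$ acoustic blocks $\left(\begin{smallmatrix}u_k & \rho_k\\ c_k^2/\rho_k & u_k\end{smallmatrix}\right)$ on the diagonal, plus a strictly lower-triangular coupling coming from the $\partial_x\alpha$ terms.

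This triangular structure immediately yields $\det(B-\lambda I)=(u_1-\lambda)^{N-1}\prod_{k=1}^N\big[(u_k-\lambda)^2-c_k^2\big]$, whose roots are exactly $u_1$ (with multiplicity $N-1$) together with $u_k\pm c_k$ for $k=1,\dots,N$; these are the $3N-1$ real eigenvalues of the statement, so the system is weakly hyperbolic on $\Omega_{\U}$. For the eigenvectors I would solve $(B-\sigma I)r=0$ case by case. For a simple acoustic speed $u_k\pm c_k$ one finds $r_\alpha=0$, all off-phase components vanish, and the eigenvector is carried by the phase-$k$ acoustic block, giving a one-dimensional eigenspace. For the repeated speed $u_1$ the eigenspace is parametrized by $r_\alpha\in\R^{N-1}$, and solving the phase-$k$ ($k\geq2$) equations for $(r_{\rho,k},r_{u,k})$ requires inverting the $2\times2$ acoustic block at $\lambda=u_1$, whose determinant is $c_k^2-(u_1-u_k)^2$; that inversion also brings in the factor $1/\alpha_k$ from the density-equation coupling. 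Assembling the $3N-1$ eigenvectors into a matrix and computing its determinant, one gets -- up to a nonvanishing factor -- a product over $k\geq2$ of $c_k^2-(u_1-u_k)^2$ times a factor that degenerates precisely when some $\alpha_k$ vanishes; hence the eigenvectors form a basis if, and only if, $\alpha_k\neq0$ for all $k$ and $|u_1-u_k|\neq c_k$ for all $k\geq2$, which is \eqref{hypfail}. Within $\Omega_{\U}$ the first condition is automatic, so only the non-resonance condition can actually fail.

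Finally, the nature of the fields follows from the invariant quantities $\nabla\sigma\cdot r$. For the $u_1$-field, every eigenvector has vanishing $u_1$-component -- the phase-$1$ density equation carries no $\alpha$-coupling, so the relation forcing $r_{u,1}=0$ holds identically -- whence $\nabla(u_1)\cdot r=r_{u,1}=0$ and the field is linearly degenerate. For the acoustic fields, the eigenvector is proportional to $(\rho_k,\pm c_k)$ in its $(\rho_k,u_k)$ components, so $\nabla(u_k\pm c_k)\cdot r=\pm\big(\rho_k c_k'(\rho_k)+c_k\big)=\pm\tfrac{d}{d\rho_k}\big(\rho_k c_k(\rho_k)\big)$, which is nonzero under the classical genuine-nonlinearity assumption on the barotropic EOS (namely $\tfrac{d}{d\rho_k}(\rho_k c_k)\neq0$, satisfied by the usual pressure laws); hence these fields are genuinely nonlinear. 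I expect the main obstacle to be the eigenvector bookkeeping for the multiple eigenvalue $u_1$: one must track exactly how the $2\times2$ determinants $c_k^2-(u_1-u_k)^2$ and the $1/\alpha_k$ factors enter the eigenvector matrix in order to obtain the sharp ``if and only if'' in \eqref{hypfail}, whereas the eigenvalue computation and the field classification are comparatively routine.
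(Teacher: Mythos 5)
Your proposal is correct and follows essentially the same strategy the paper itself uses for the analogous relaxation-system result in Appendix \ref{app:sec:spectre:multi:relax} (the paper defers the proof of Proposition \ref{prop:spectre:multi} to \cite{Note-multi}): pass to primitive variables $(\alpha,\rho,u)$, exploit the block lower-triangular structure to factor the characteristic polynomial as $(u_1-\lambda)^{N-1}\prod_{k}\bigl[(u_k-\lambda)^2-c_k^2\bigr]$, and read off the degeneracy of the eigenvector basis from the singular $2\times2$ acoustic blocks at $\lambda=u_1$ and the $1/\alpha_k$ couplings. Your computations of the primitive-form equations, of $r_{u,1}=0$ for the $u_1$-eigenspace (hence linear degeneracy), and of $\nabla(u_k\pm c_k)\cdot r=\pm\tfrac{d}{d\rho_k}(\rho_k c_k)$ all check out, and you are right to flag that genuine nonlinearity of the acoustic fields requires $\tfrac{d}{d\rho_k}(\rho_k c_k)\neq 0$ (equivalently $2p_k'+\rho_k p_k''\neq 0$), which is a standard strengthening of the paper's stated assumption $p_k'>0$.
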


\medskip

\begin{proof}
 The proof can be found in \cite{Note-multi}.
\end{proof}

\medskip

\begin{remark}
The system is not hyperbolic in the usual sense because when (\ref{hypfail}) is not satisfied, the right eigenvectors do not span the whole space $\R^{3N-1}$. Two possible phenomena may cause a loss of the strict hyperbolicity: an interaction between the linearly degenerate field of velocity $u_1$ with one of the acoustic fields of the phase $k$ for $k=2,..,N$, and vanishing values of one of the phase fractions $\alpha_k$, $k=1,..,N$. In the physical configurations of interest in the present work (such as three phase flows in nuclear reactors), the flows have strongly subsonic relative velocities, \emph{i.e.} a relative Mach number much smaller than one:
\begin{equation}
M_k=\frac{|u_1-u_k|}{c_k(\rho_k)} << 1, \qquad \forall k=2,..,N,
\end{equation}
so that resonant configurations corresponding to wave interaction between acoustic fields and the $u_1$-contact discontinuity are unlikely to occur.
In addition, following the definition of the admissible physical space $\Omega_{\U}$, one never has $\alpha_k=0$. However,  $\alpha_k=0$ is to be understood in the sense $\alpha_k\to 0$ since one aim of this work is to construct a robust enough numerical scheme that could handle all the possible values of $\alpha_k\in(0,1)$, $k=1,..,N$, especially, arbitrarily small values.
\end{remark}

An important consequence of the closure law $\mathscr{V}_I(\U) = u_1$ is the linear degeneracy of the field associated with the eigenvalue $\sigma_1(\U)=..=\sigma_{N-1}(\U)=u_1$. This allows to define solutions with discontinuous phase fractions through the Riemann invariants of this linear field. Indeed, as proven in \cite{Note-multi}, there are $2N$ independent Riemann invariants associated with this field which is enough to parametrize the integral curves of the field since the multiplicity of the eigenvalue is $N-1$. This can be done as long as the system is hyperbolic \emph{i.e.} as long as \eqref{hypfail} is satisfied, which prevents the interaction between shock waves and the non conservative products in the model. 

\medskip
An important consequence of the closure law \eqref{multi:vi:pi} for the interface pressures  $\mathscr{P}_{kl}(\U)$ is the existence of an additional conservation law for the smooth solutions of \eqref{sys:multi0}.
Defining the specific internal energy of phase $k$, $e_k$ by $e_k'(\rho_k)=p_k(\rho_k)/\rho_k^2$ and the specific total energy of phase $k$ by $E_k=u_k^2/2 +  e_k(\rho_k)$, the smooth solutions of \eqref{sys:multi0} satisfy the following identities:
\begin{align}
\label{multi:ener1} &
 \textstyle \dv_t \paren{ \alpha_1\rho_1 E_1} + \dv_x  \paren{\alpha_1\rho_1 E_1 u_1+ \alpha_1 p_1(\rho_1)u_1} + u_1 \sum_{l=2}^N  p_l(\rho_l)  \dv_x \alpha_l =0, \\[2ex]
\label{multi:enerk} & \dv_t \paren{ \alpha_k\rho_k E_k} + \dv_x  \paren{\alpha_k\rho_k E_k u_k+ \alpha_k p_k(\rho_k)u_k} - u_1 p_k(\rho_k)  \dv_x \alpha_k =0, & k=2,..,N.
\end{align}

In \cite{Note-multi}, the mappings $\U\mapsto(\alpha_k\rho_k E_k)(\U)$ are proven to be (non strictly) convex for all $k=1,..,N$. Since, as long as the system is hyperbolic, the gradients of $\alpha_k$ are supported away from shock waves, it is natural to use theses mappings as mathematical entropies of system \eqref{sys:multi0} and select the physical non-smooth weak solutions of \eqref{sys:multi0} as those which satisfy he following entropy inequalities in the weak sense:
\begin{align}
\label{multi:ener1:ineq} &
 \textstyle \dv_t \paren{ \alpha_1\rho_1 E_1} + \dv_x  \paren{\alpha_1\rho_1 E_1 u_1+ \alpha_1 p_1(\rho_1)u_1} + u_1 \sum_{l=2}^N  p_l(\rho_l)  \dv_x \alpha_l \leq 0, \\[2ex]
\label{multi:enerk:ineq} & \dv_t \paren{ \alpha_k\rho_k E_k} + \dv_x  \paren{\alpha_k\rho_k E_k u_k+ \alpha_k p_k(\rho_k)u_k} - u_1 p_k(\rho_k)  \dv_x \alpha_k \leq 0, & k=2,..,N.
\end{align}
If a shock appears in phase 1, inequality \eqref{multi:ener1:ineq} is strict and if a shock appears in phase $k$ for some $k\in\lbrace2,..,N\rbrace$ inequality \eqref{multi:enerk:ineq} is strict.
Summing for $k=1,..,N$, the entropy weak solutions of \eqref{sys:multi0} are seen to satisfy the following total energy inequality:
\begin{equation}
 \label{multi:enertot}
 \textstyle \dv_t \paren{\sum_{k=1}^N \alpha_k\rho_k E_k} + \dv_x  \paren{\sum_{k=1}^N \paren{\alpha_k\rho_k E_k u_k+ \alpha_k p_k(\rho_k)u_k}}  \leq 0. 
\end{equation}
Obviously, for smooth solutions, \eqref{multi:enertot} is an equality.

\section{A relaxation approximate Riemann solver}
\label{sec:riemsolv}
As for the Baer-Nunziato two phase flow model, system \eqref{sys:multi0} has genuinely non-linear fields associated with the phasic acoustic waves, which makes the construction of an exact Riemann solver very difficult.
Following similar steps as in \cite{coq-13-rob}, we introduce a relaxation approximation of the multiphase flow model \eqref{sys:multi0} which is an enlarged system involving $N$ additional unknowns $\T_k$, associated with linearizations of the phasic pressure laws. These linearizations are designed to get a quasilinear enlarged system, shifting the initial non-linearity from the convective part to a stiff relaxation source term. The relaxation approximation is based on the idea that the solutions of the original system are formally recovered as the limit of the solutions of the proposed enlarged system, in the regime of a vanishing relaxation coefficient $\eps>0$. For a general framework on relaxation schemes we refer to \cite{CGPIR,CGS,bou-04-non}.

\medskip
We propose to approximate the solutions of \eqref{sys:multi0} by the solutions of the following Suliciu relaxation type model (see \cite{bou-04-non,Suliciu1,Suliciu2}) in the limit $\eps\to0$:
\begin{equation}
\label{sys:multi:relax0}
 \dv_t \W^\eps + \dv_x {\bf g}(\W^\eps)+ {\bf d}(\W^\eps)\dv_x\W^\eps =\dfrac{1}{\eps} \mathcal{R}(\W^{\eps}), \quad x\in\R, \, t>0,
\end{equation}
where the relaxation unknown is now expected to belong to the following enlarged phase space: 
\begin{multline*}
\Omega_{\W}=
\Big \lbrace 
\W=\paren{\alpha_1,..,\alpha_{N-1},\alpha_1\rho_1,..,\alpha_N\rho_N,\alpha_1\rho_1 u_1,..,\alpha_N\rho_N u_N,\alpha_1\rho_1 \T_1,..,\alpha_N\rho_N \T_N}^T\in\R^{4N-1}, \\
\text{such that} \ 0 < \alpha_1,..,\alpha_{N-1} < 1,  \ \alpha_k\rho_k >0 \ \text{and} \ \alpha_k\rho_k\T_k >0 \  \text{for} \ k=1,..,N  
\Big \rbrace,
\end{multline*}
and where:
\begin{equation*}
{\bf g}(\W)=\left [
\begin{matrix}
 0 \\
 \vdots \\
 0 \\
 \alpha_1 \rho_1 u_1 \\
 \vdots \\
 \alpha_N \rho_N u_N\\
 \alpha_1\rho_1 u_1^2 + \alpha_1 \pi_1\\
 \vdots \\
 \alpha_N\rho_N u_N^2 + \alpha_N \pi_N\\
 \alpha_1 \rho_1 \T_1 u_1\\
 \vdots \\
 \alpha_N \rho_N \T_N u_N
\end{matrix}  \right ], \quad {\bf d}(\W)\dv_x\W=
\left [
\begin{matrix}
 u_1 \dv_x\alpha_1\\
 \vdots \\
 u_1 \dv_x\alpha_{N-1}\\
 0 \\
 \vdots \\
 0 \\
 \sum_{\substack{l=1 \\ l\neq 1}}^N\Pi_{1l}(\W)\dv_x\alpha_l\\
 \vdots \\
 \sum_{\substack{l=1 \\ l\neq N}}^N\Pi_{Nl}(\W)\dv_x\alpha_l\\
 0\\
 \vdots \\
 0
\end{matrix}  \right ]\quad \mathcal{R}(\W)=
\left [
\begin{matrix}
 0\\
 \vdots \\
 0\\
 0 \\
 \vdots \\
 0 \\
 0\\
 \vdots \\
 0\\
 \alpha_1 \rho_1 (\tau_1 -\T_1)\\
 \vdots \\
 \alpha_N \rho_N (\tau_N -\T_N)
\end{matrix}  \right ].
\end{equation*}
The saturation constraint is still valid:
\begin{equation}
 \label{multi:saturation}
 \sum_{k=1}^N \alpha_k =1.
\end{equation}
For each phase $k=1,..,N$, $\tau_k=\rho_k^{-1}$ is the specific volume of phase $k$ and the pressure $\pi_k$ is a (partially) linearized pressure $\pi_k(\tau_k,\T_k)$, the equation of state of which is defined by:
\begin{equation}
\label{multi:press_relax}
\pi_k(\tau_k,\T_k)=\PP_k(\T_k) + a_k^2(\T_k-\tau_k), \quad k=1,..,N,
\end{equation}
where $\tau \mapsto \PP_k(\tau) := p_k(\tau^{-1})$ is the pressure of phase $k$ seen as a function of the specific volume $\tau=\rho^{-1}$. Accordingly with \eqref{multi:vi:pi} the relaxation interface pressure $\Pi_{kl}(\W)$ is defined by:
\begin{equation}
 \label{multi:relax:pi}
 \left \lbrace 
 \begin{array}{lll}
  \text{for $k=1$}, \quad & \Pi_{1l}(\W) = \pi_l(\tau_l,\T_l), \quad & \text{for $l=2,..,N$}\\
  \text{for $k\neq 1$}, \quad & \Pi_{kl}(\W) = \pi_k(\tau_k,\T_k),  \quad& \text{for $l=1,..,N, \, l\neq k$}.
 \end{array}
 \right.
\end{equation}

When $N=2$, system \eqref{sys:multi:relax0} is exactly the same relaxation approximation introduced in \cite{coq-13-rob} for the Baer-Nunziato two phase flow model. In the formal limit $\eps \to 0$, the additional variable $\T_k$ tends towards the specific volume $\tau_k$, and the linearized pressure law $\pi_k(\tau_k,\T_k)$ tends towards the original non-linear pressure law $p_k(\rho_k)$, thus recovering system \eqref{sys:multi0} in the first $3N-1$ equations of \eqref{sys:multi:relax0}. 
The constants $(a_k)_{k=1,..,N}$ in (\ref{multi:press_relax}) are positive parameters that must be taken large enough so as to satisfy the following sub-characteristic condition (also called Whitham's condition):
\begin{equation}
\label{whitham}
a_k^2 > -\frac{d\PP_k}{d\tau_k}(\T_k), \quad k=1,..,N,
\end{equation}
for all the values $\T_k$ encountered in the solution of \eqref{sys:multi:relax0}. Performing a Chapman-Enskog expansion, one can see that Whitham's condition expresses that system \eqref{sys:multi:relax0} is a viscous perturbation of system \eqref{sys:multi0} in the regime of small $\eps$.

\medskip
At the numerical level, a fractional step method is commonly used in the implementation of relaxation methods: the first step is a time-advancing step using the solution of the Riemann problem for the convective part of \eqref{sys:multi:relax0}:
\begin{equation}
\label{sys:multi:relax}
 \dv_t \W + \dv_x {\bf g}(\W)+ {\bf d}(\W)\dv_x\W =0, \quad x\in\R, \, t>0,
\end{equation}
while the second step consists in an instantaneous relaxation towards
the equilibrium system by imposing $\T_k = \tau_k$ in the solution
obtained by the first step. This second step is equivalent to sending
$\eps$ to $0$ instantaneously. As a consequence, we now focus on constructing an exact Riemann solver for the homogeneous convective system \eqref{sys:multi:relax}. Let us first state the main mathematical properties of system \eqref{sys:multi:relax}.

\medskip
The linearization \eqref{multi:press_relax} is designed so that system \eqref{sys:multi:relax} has only linearly degenerate fields, thus making the resolution of the Riemann problem for \eqref{sys:multi:relax} easier than for the original system \eqref{sys:multi0}. We have the following proposition:

\begin{prop}
\label{prop:spectre:multi:relax}
System \eqref{sys:multi:relax} is weakly hyperbolic on $\Omega_\W$ in the following sense. It admits the following $4N-1$ real eigenvalues: $\sigma_1(\W)=..=\sigma_{N-1}(\W)=u_1$, $\sigma_{N-1+k}(\W)=u_k-a_k \tau_k$, $\sigma_{2N-1+k}(\W)=u_k+a_k\tau_k$ and $\sigma_{3N-1+k}(\W)=u_k$ for $k=1,..,N$. All the characteristic fields are linearly degenerate and the corresponding right eigenvectors are linearly independent if, and only if,
\begin{equation}
\label{multi:hypfail}
\alpha_k \neq 0, \quad \forall k=1,..,N \qquad \text{and} \qquad |u_1-u_k| \neq a_k\tau_k, \quad \forall k=2,..,N.
\end{equation}
\end{prop}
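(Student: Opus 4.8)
The plan is to write the convective system \eqref{sys:multi:relax} in quasilinear form $\dv_t\W + A(\W)\dv_x\W = 0$ and to analyze the spectrum of $A(\W)$ after passing to the non-conservative (primitive) variables $(\alpha_1,\dots,\alpha_{N-1},\tau_1,u_1,\T_1,\dots,\tau_N,u_N,\T_N)$, where $\tau_k=\rho_k^{-1}$. Using the fraction equations $\dv_t\alpha_k+u_1\dv_x\alpha_k=0$ together with the saturation constraint \eqref{multi:saturation} and the mass equations to eliminate the time derivatives, I would derive the evolution equations for each phasic triple $(\tau_k,u_k,\T_k)$. The structural facts to extract are: (i) the $N-1$ fractions are merely transported at speed $u_1$, and their equations involve the gradient of no phasic variable; (ii) every relaxation variable obeys $\dv_t\T_k+u_k\dv_x\T_k=0$; and (iii) thanks to the closure \eqref{multi:relax:pi}, the non-conservative products reorganize so that no phase couples to the gradients of another phase's variables — phase $1$'s momentum equation couples only to the $\dv_x\alpha_l$, while phase $k\geq 2$ couples (through its mass equation) only to $\dv_x\alpha_k$, with a coefficient proportional to $(u_k-u_1)/\alpha_k$.

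In these variables $A(\W)$ is block lower-triangular: an autonomous diagonal block $u_1 I_{N-1}$ for the fractions, together with one $3\times 3$ phasic block $C_k$ in $(\tau_k,u_k,\T_k)$ per phase, the phasic blocks being coupled to the $\alpha$-block but not to one another. The characteristic polynomial therefore factorizes as $(u_1-\lambda)^{N-1}\prod_{k=1}^N\det(C_k-\lambda I)$, and evaluating each factor — using $\dv_x\pi_k=-a_k^2\dv_x\tau_k+(\PP_k'+a_k^2)\dv_x\T_k$ — gives $(u_k-\lambda)[(u_k-\lambda)^2-a_k^2\tau_k^2]$. This produces exactly the $4N-1$ real eigenvalues claimed, namely $u_1$ with multiplicity $N-1$, and $u_k$, $u_k\pm a_k\tau_k$ for each $k$. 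For linear degeneracy I would exhibit, for each field, an explicit right eigenvector and check $\nabla_\W\sigma\cdot r=0$: the acoustic eigenvectors of $C_k$ are proportional to $(1,\mp a_k,0)$ in $(\tau_k,u_k,\T_k)$, whence $\nabla\sigma\cdot r=r^{u_k}\pm a_k r^{\tau_k}=0$ — here one uses crucially that the $a_k$ are \emph{constant} parameters — while the $u_k$- and $u_1$-eigenvectors all have vanishing velocity component, so that $\nabla(u_k)\cdot r=0$ and $\nabla(u_1)\cdot r=0$ trivially.

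For the independence criterion I would solve $(A-\lambda I)r=0$ blockwise. For an acoustic or middle eigenvalue $\lambda$ of phase $k$ that is distinct from $u_1$, the $\alpha$-rows force $r_\alpha=0$ and every non-resonant phasic block forces $r_j=0$ whenever $\lambda\notin\mathrm{spec}(C_j)$, so the eigenvector is supported in the resonant phase(s); since such vectors live in disjoint phasic blocks they are automatically independent. The delicate eigenvalue is $\lambda=u_1$: here $r_\alpha$ is free, the phase-$1$ block is solvable because the coupling $B_1$ only hits the momentum row, whose image lies in the range of $C_1-u_1 I$, and each block $C_k-u_1 I$ with $k\geq 2$ is invertible precisely when $u_1\neq u_k$ and $|u_1-u_k|\neq a_k\tau_k$, so that $r_k$ is then determined by $r_\alpha$. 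Counting the free parameters gives geometric multiplicity equal to the algebraic multiplicity, so $A$ is diagonalizable.

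The main obstacle, and the heart of the statement, is the converse. When $|u_1-u_k|=a_k\tau_k$ for some $k\geq 2$ (with $\alpha_k\neq 0$, so that the coupling coefficient $(u_k-u_1)/\alpha_k$ does not vanish), I expect the Fredholm solvability condition for the phase-$k$ block $(C_k-u_1 I)r_k=-B_k r_\alpha$ to force the $\alpha_k$-component of $r_\alpha$ to vanish; this strictly lowers the geometric multiplicity of $u_1$ below its algebraic multiplicity and makes $A$ defective, which is exactly the acoustic–contact resonance. Establishing this solvability obstruction cleanly — computing the left null vector of $C_k-u_1 I$ at resonance and checking that it pairs nontrivially with the image of $B_k$ — is the technical crux. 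The remaining breakdown, $\alpha_k=0$, I would treat by noting that it lies on the boundary of $\Omega_\W$, where the change of variables to the primitive form is singular (the coupling coefficient blows up), and by checking directly in the conservative variables that two eigenvectors coalesce there.
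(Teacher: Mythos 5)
Your proposal follows essentially the same route as the paper's proof in Appendix \ref{app:sec:spectre:multi:relax}: pass to primitive (phasic) variables, exploit the block lower-triangular structure of the quasilinear matrix to read off the eigenvalues as $u_1$ (from the fraction block) together with the spectra of the decoupled $3\times 3$ phasic blocks, exhibit explicit eigenvectors to verify linear degeneracy (using that the $a_k$ are constants), and characterize the independence of the eigenvectors by the invertibility of the eigenvector matrix, which degenerates exactly at the resonance $|u_1-u_k|=a_k\tau_k$ or as $\alpha_k\to 0$. The one point to treat with care is the case $u_1=u_k$ for some $k\geq 2$, which is \emph{not} excluded by \eqref{multi:hypfail}: there $C_k-u_1 I$ is singular, but the coupling block $B_k$ is proportional to $u_k-u_1$ and hence vanishes, so the $u_1$-eigenproblem remains solvable and the eigenvectors stay independent --- the paper notes this explicitly, and your phrasing ``invertible precisely when $u_1\neq u_k$ and $|u_1-u_k|\neq a_k\tau_k$'' should not be allowed to suggest a loss of diagonalizability at $u_1=u_k$.
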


\medskip

\begin{proof}
 The proof is given in Appendix \ref{app:sec:spectre:multi:relax}.
\end{proof}

\medskip
\begin{remark}
Here again, one never has $\alpha_k=0$ for $\W\in\Omega_\W$. However, $\alpha_k=0$ is to be understood in the sense $\alpha_k\to 0$.
\end{remark}

We also have the following properties:
\begin{prop}\label{prop:ener:multi:relax}
The smooth solutions as well as the entropy weak solutions of \eqref{sys:multi:relax} satisfy the following phasic energy equations:
\begin{align}
\label{multi:relax:ener1} &
 \textstyle \dv_t \paren{ \alpha_1\rho_1 \E_1} + \dv_x  \paren{\alpha_1\rho_1 \E_1 u_1+ \alpha_1 \pi_1 u_1} + u_1 \sum_{l=2}^N  \pi_l  \dv_x \alpha_l =0, \\[2ex]
\label{multi:relax:enerk} & \dv_t \paren{ \alpha_k\rho_k \E_k} + \dv_x  \paren{\alpha_k\rho_k \E_k u_k+ \alpha_k \pi_k u_k} - u_1 \pi_k  \dv_x \alpha_k =0, & k=2,..,N,
\end{align}
where
\begin{equation*}
 \mathcal{E}_k:= \mathcal{E}_k(u_k,\tau_k,\T_k) = \dfrac{u_k^2}{2} + e_k(\T_k) + \dfrac{\pi_k^2(\tau_k,\T_k) - \PP_k^2(\T_k)}{2a_k^2}, \qquad k=1,..,N.
\end{equation*}
Summing for $k=1,..,N$, the smooth solutions and the entropy weak solutions of \eqref{sys:multi:relax} are seen to satisfy an additional conservation law:
\begin{equation}
 \label{multi:relax:enertot}
 \textstyle \dv_t \paren{\sum_{k=1}^N \alpha_k\rho_k \E_k} + \dv_x  \paren{\sum_{k=1}^N \alpha_k\rho_k \E_k u_k+ \alpha_k \pi_k u_k}  =0. 
\end{equation}
Under Whitham's condition \eqref{whitham}, to be met for all the $\T_k$ under consideration, the following Gibbs principles are satisfied for $k=1,..,N$:
\begin{equation}
\label{gibbsAE}
 \tau_k = \argmin_{\T_k} \lbrace \mathcal{E}_k(u_k,\tau_k,{\T_k})\rbrace, \quad \text{and} \quad \mathcal{E}_k(u_k,\tau_k,{\tau_k})=E_k(u_k,\tau_k),
 \end{equation}
where $E_k(u_k,\tau_k)=u_k^2/2+e_k(\rho_k)$.
\end{prop}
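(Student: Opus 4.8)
The plan is to treat the three assertions in turn: first the phasic energy balances \eqref{multi:relax:ener1}--\eqref{multi:relax:enerk} for smooth solutions, then the total conservation law \eqref{multi:relax:enertot} by summation (together with the extension to weak solutions), and finally the Gibbs principles \eqref{gibbsAE} by a one-variable calculus argument.

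For the main part I would work with the material derivative $D_k := \dv_t + u_k \dv_x$ along the phase-$k$ velocity. Reading off the last $N$ components of \eqref{sys:multi:relax} and using the mass balance $\dv_t(\alpha_k\rho_k) + \dv_x(\alpha_k\rho_k u_k) = 0$, the relaxation variable obeys the pure transport $D_k \T_k = 0$. Combining mass conservation with the transport equation $\dv_t\alpha_k + u_1\dv_x\alpha_k = 0$ — which gives $D_k\alpha_k = (u_k-u_1)\dv_x\alpha_k$, vanishing for $k=1$ — yields the specific-volume balances $D_1\tau_1 = \tau_1\dv_x u_1$ and, for $k\neq 1$, $D_k\tau_k = \tau_k\dv_x u_k + \tfrac{\tau_k}{\alpha_k}(u_k-u_1)\dv_x\alpha_k$. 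Expanding the momentum components of \eqref{sys:multi:relax} with the closure \eqref{multi:relax:pi} and the saturation constraint $\sum_l \dv_x\alpha_l = 0$ produces the velocity balances $\rho_k D_k u_k + \dv_x\pi_k = 0$ for $k\geq 2$, together with the phase-$1$ balance carrying its extra interfacial pressure differences. Differentiating the definition of $\E_k$ and using $D_k\T_k = 0$ collapses its material derivative to $D_k\E_k = u_k D_k u_k + a_k^{-2}\pi_k D_k\pi_k$, while $\pi_k = \PP_k(\T_k) + a_k^2(\T_k-\tau_k)$ with $D_k\T_k=0$ gives the key identity $D_k\pi_k = -a_k^2 D_k\tau_k$. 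Substituting the velocity and specific-volume balances, multiplying by $\alpha_k\rho_k = \alpha_k/\tau_k$, and matching against the expanded flux divergence $\dv_x(\alpha_k\pi_k u_k)$ reproduces exactly \eqref{multi:relax:ener1} and \eqref{multi:relax:enerk}; the difference between the two cases is accounted for precisely by the $(u_k-u_1)\dv_x\alpha_k$ contribution present only for $k\neq 1$.

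The summation step is immediate: adding \eqref{multi:relax:ener1} to the sum over $k=2,\dots,N$ of \eqref{multi:relax:enerk}, the nonconservative remainders $u_1\sum_{l=2}^N \pi_l\dv_x\alpha_l$ and $-u_1\sum_{k=2}^N \pi_k\dv_x\alpha_k$ are identical sums and cancel, leaving \eqref{multi:relax:enertot}. To pass from smooth to entropy weak solutions I would invoke Proposition \ref{prop:spectre:multi:relax}: since every characteristic field of \eqref{sys:multi:relax} is linearly degenerate, the admissible weak solutions are superpositions of contact discontinuities across which the nonconservative products are unambiguous and the phasic energies behave as Riemann invariants, so the balances continue to hold with equality in the weak sense.

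For the Gibbs principles I would regard $\E_k$ as a function of the single variable $\T_k$ with $u_k$ and $\tau_k$ frozen. Using the thermodynamic relation $\tfrac{d}{d\T_k}e_k(\T_k) = -\PP_k(\T_k)$ (equivalent to $e_k'(\rho_k) = p_k/\rho_k^2$) and $\partial_{\T_k}\pi_k = \PP_k'(\T_k) + a_k^2$, a short computation collapses the derivative to $\partial_{\T_k}\E_k = (\T_k-\tau_k)\bigl(\PP_k'(\T_k) + a_k^2\bigr)$. Whitham's condition \eqref{whitham} makes the second factor strictly positive, so the derivative changes sign exactly at $\T_k = \tau_k$, which is therefore the unique global minimizer; evaluating there gives $\pi_k = \PP_k(\tau_k)$, whence $\E_k(u_k,\tau_k,\tau_k) = u_k^2/2 + e_k(\tau_k) = E_k(u_k,\tau_k)$. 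The only genuine obstacle in the whole argument is bookkeeping: correctly propagating the nonconservative products through the saturation constraint and keeping the $k=1$ closure distinct from the $k\neq 1$ closure in \eqref{multi:relax:pi}. Once the material-derivative identities for $\T_k$, $\tau_k$, $\pi_k$, and $u_k$ are in hand, each remaining step is a routine verification.
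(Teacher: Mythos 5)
Your proposal is correct and follows essentially the same route as the paper's (much more condensed) proof: derive the kinetic-energy balance from the phasic mass and momentum equations, derive the evolution of $\pi_k$ from the mass equation together with the advection of $\T_k$ (your identity $D_k\pi_k=-a_k^2D_k\tau_k$), combine, and obtain the Gibbs principle from a one-variable study of $\T_k\mapsto\E_k$, whose derivative $(\T_k-\tau_k)\bigl(\PP_k'(\T_k)+a_k^2\bigr)$ is exactly the point of Whitham's condition. Your bookkeeping of the $k=1$ versus $k\neq 1$ closures and of the saturation constraint checks out, so there is nothing to add.
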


\medskip

\begin{proof}
The proof of \eqref{multi:relax:ener1} and \eqref{multi:relax:enerk} follows from classical manipulations. From the phasic mass and momentum equations we first derive the evolution equation satisfied by $u_k^2/2$. We then derive an equation for $\pi_k(\tau_k,\T_k)$, using the mass equation of phase $k$ and the advection equation of $\T_k$. Combining these two equations and the fact that $\T_k$ is advected, we obtain \eqref{multi:relax:ener1} and \eqref{multi:relax:enerk}. The proof of Gibbs principle follows from an easy study of the function $\T_k \mapsto \mathcal{E}_k(u_k,\tau_k,\T_k)$.
\end{proof}

\medskip

\begin{remark}
Since all the characteristic fields of system \eqref{sys:multi:relax} are linearly degenerate, the mixture energy equation \eqref{multi:relax:enertot} is expected to be satisfied for not only smooth but also weak solutions. However, as we will see later when constructing the solutions of the Riemann problem, in the stiff cases of vanishing phases where one of the left or right phase fractions $\alpha_{k,L}$ or $\alpha_{k,R}$ is close to zero for some $k\in\lbrace 1,..,N\rbrace$, ensuring positive values of the densities of phase $k$ requires an extra dissipation of the mixture energy by the computed solution.
\end{remark}

\subsection{The relaxation Riemann problem}

Let $(\W_L,\W_R)$ be two constant states in $\Omega_\W$ and consider the Riemann problem for system \eqref{sys:multi:relax} with the following initial condition:
\begin{equation}
 \label{sys:multi:relax:CI}
   \W(x,t=0)= 
  \left \lbrace
 \begin{array}{ll}
  \W_L, & \text{if $x<0$},\\
  \W_R, & \text{if $x>0$}.
 \end{array}
\right.
 \end{equation}

\subsubsection{Definition of the solutions to the Riemann problem}

Following Proposition \ref{prop:spectre:multi:relax}, a solution to the Riemann problem \eqref{sys:multi:relax}-\eqref{sys:multi:relax:CI} is expected to be a self-similar function composed of constant intermediate states separated by waves which are contact discontinuities associated with the system's eigenvalues. Since the phase fractions are transported by the material velocity of phase 1, the non-conservative products involving the phase fraction gradients are only active across this wave and the phases are independent away from this wave. In particular, for a fixed $k$ in $\lbrace 2,..,N \rbrace$, the phase $k$ quantities may change across the contact discontinuities associated with the eigenvalues $u_k-a_k\tau_k$, $u_k$, $u_k-a_k\tau_k$ and $u_1$ and are constant across the other waves. For the applications aimed at by this work, we are only interested in solutions which have a subsonic wave ordering:
\[
 |u_1-u_k| < a_k\tau_k, \qquad \forall k=2,..,N.
\]
Consequently, in the self-similar Riemann solution, the propagation velocity $u_1$ lies in-between the acoustic waves of all the other phases. Moreover, ensuring the positivity of the phase 1 densities also requires the material wave $u_1$ to lie in between the acoustic waves of phase 1.

\begin{center}
\begin{tikzpicture}[scale=2.5]
\small
\tikzstyle{axes}=[thin,>=latex]
\begin{scope}[axes]
\draw (-1.6,0)   node {
	\begin{tikzpicture}[scale=2.5]
	\draw[->] (-1,0)--(1,0) node[right=3pt] {$x$};
        \draw[->] (0,0)--(0,1) node[left=3pt] {$t$};
	\draw [very thick,color=red] (0,0) -- (30:1cm) node[color=black,above] {$u_{1}+a_1\tau_{1}$};
        \draw [very thick,color=red] (0,0) -- (110:1cm) node[color=black,above] {$u_1$};
	\draw [very thick,color=red] (0,0) -- (155:1cm) node[color=black,above] {$u_{1}-a_1\tau_{1}$};
	\draw (0,-0.2) node[] {Wave structure for phase $1$.};
	\end{tikzpicture}
};
\draw (1.6,0)   node {
	\begin{tikzpicture}[scale=2.5]
	\draw[->] (-1,0)--(1,0) node[right=3pt] {$x$};
        \draw[->] (0,0)--(0,1) node[left=3pt] {$t$};
	\draw [very thick,color=red] (0,0) -- (30:1cm) node[color=black,above] {$u_{k}+a_k\tau_{k}$};
        \draw [very thick,color=red] (0,0) -- (80:1cm) node[color=black,above] {$u_k$};
	\draw [dashed,very thick,color=red] (0,0) -- (110:1cm) node[color=black,above] {$u_1$};
	\draw [very thick,color=red] (0,0) -- (155:1cm) node[color=black,above] {$u_{k}-a_k\tau_{k}$};
	\draw (0,-0.2) node[] {Wave structure for phase $k$, $k=2,..,N$.};%
	\end{tikzpicture}
}; 
\end{scope}
\normalsize
\end{tikzpicture}
\end{center}

We now introduce the definition of solutions to the Riemann problem \eqref{sys:multi:relax}-\eqref{sys:multi:relax:CI}, which is a straightforward extension of the definition of solutions in the case of two phase flows (see \cite[Definition 4.1]{coq-13-rob}).
\begin{definition}
 \label{def_sol}
Let $(\W_{L},\W_{R})$ be two states in $\Omega_{\W}$. A solution to the Riemann problem \eqref{sys:multi:relax}-\eqref{sys:multi:relax:CI} \textbf{with subsonic wave ordering} is a self-similar mapping $\W(x,t)=\W_{\rm Riem}(x/t;\W_{L},\W_{R})$ where the function $\xi \mapsto \W_{\rm Riem}(\xi;\W_{L},\W_{R})$ satisfies the following properties: 
\begin{enumerate}
 \item[(i)] $\W_{\rm Riem}(\xi;\W_{L},\W_{R})$ is a piecewise constant function, composed of $3N$ waves (associated with the eigenvalues $u_k-a_k\tau_k$, $u_k$ and $u_k+a_k\tau_k$ for $k=1,..,N$) separating $3N+1$ constant intermediate states belonging to $\Omega_{\W}$, and such that:
\begin{equation}
\label{limits}
\begin{aligned}
 & \xi < \min\limits_{k=1,..,N} \left \lbrace u_{k,L}-a_{k}\tau_{k,L}\right \rbrace \Longrightarrow 
 \W_{\rm Riem}(\xi;\W_{L},\W_{R})=\W_L, \\
 & \xi > \max\limits_{k=1,..,N} \left \lbrace u_{k,R}+a_{k}\tau_{k,R}\right \rbrace \Longrightarrow \W_{\rm Riem}(\xi;\W_{L},\W_{R})=\W_R.
\end{aligned}
\end{equation}
 \item[(ii)] There exist $u_1^*\in\R$ and $\Pi^*=(\pi_2^*,..,\pi_N^*)\in\R^{N-1}$ which depend on $(\W_L,\W_R)$ such that the function $\W(x,t)=\W_{\rm Riem}(x/t;\W_{L},\W_{R})$ satisfies the following system of PDEs in the distributional sense:
\begin{equation}
\label{sys:multi:relax:solve}
\dv_t \W + \dv_x {\bf g}(\W)+ {\bf D}^*(\W_L,\W_R)\delta_0(x-u_1^*t)=0,
\end{equation}
with  
$$
\begin{array}{r@{\,=\,(\,}c@{\,,\,}c@{\,,\,}c@{\,,\,}c@{\,,\,}c@{\,,\,}c@{\,,\,}c@{\,,\,}c@{\,,\,}c@{\,,\,}c@{\,,\,}c@{\,,\,}c@{\,,\,}l@{\,)^T}}
{\bf D}^*(\W_L,\W_R) & u_1^* \Delta\alpha_1 & .. & u_1^* \Delta\alpha_{N-1} & 0 & .. & 0 & \sum_{l=2}^N\pi_l^*\Delta\alpha_l & -\pi_2^*\Delta\alpha_2& .. & -\pi_N^*\Delta\alpha_N & 0 & .. & 0
\end{array}
$$
where for $k=1,..,N$, $\Delta \alpha_k=\alpha_{k,R}-\alpha_{k,L}$.
 
\item[(iii)] Furthermore, the function $\W(x,t)=\W_{\rm Riem}(x/t;\W_{L},\W_{R})$ also satisfies the following energy equations in the distributional sense:
\begin{align}
\label{multi:relax:ener1:bis} &
 \textstyle \dv_t \paren{ \alpha_1\rho_1 \E_1} + \dv_x  \paren{\alpha_1\rho_1 \E_1 u_1+ \alpha_1 \pi_1 u_1} + u_1^* \sum_{l=2}^N  \pi_l^*  \dv_x \alpha_l =0, \\[1ex]
\label{multi:relax:enerk:bis} & \dv_t \paren{ \alpha_k\rho_k \E_k} + \dv_x  \paren{\alpha_k\rho_k \E_k u_k+ \alpha_k \pi_k u_k} - u_1^* \pi_k^*  \dv_x \alpha_k =-\mathcal{Q}_k(u_1^*,\W_L,\W_R)\delta_0(x-u_1^*t), & k=2,..,N.
\end{align}
where $\mathcal{Q}_k(u_1^*,\W_L,\W_R)$ is a nonnegative number. 

\item[(iv)] The solution has a \textbf{subsonic wave ordering} in the following sense: 
\begin{equation}
 \label{subsol}
 u_{k,L}-a_k\tau_{k,L} < u_1^* < u_{k,R}+a_k\tau_{k,R}, \qquad \forall k=2,..,N.
\end{equation}
\end{enumerate}
\end{definition}

\begin{remark}
Following \eqref{multi:relax:pi}, there are $N-1$ interface pressures corresponding to the phase pressures $(\pi_2,..,\pi_N)$. Moreover, the saturation constraint \eqref{multi:saturation} gives $\sum_{l=1,l\neq k}^N\dv_x\alpha_l=-\dv_x\alpha_k$ for all $k=1,..,N$, which justifies the simplified form of the non-conservative product ${\bf D}^*(\W_L,\W_R)\delta_0(x-u_1^*t)$.
\end{remark}

\begin{remark}
\label{remark:kinetic1}
Equation \eqref{multi:relax:enerk:bis} is a relaxed version of \eqref{multi:relax:enerk} in which the energy of phase $k$ is allowed to be dissipated across the $u_1$-wave despite the linear degeneracy of the associated field. As explained in \cite{coq-13-rob} for the relaxation approximation of the Baer-Nunziato two phase flow model, allowing such a dissipation may be necessary when an initial phase fraction $\alpha_{k,L}$ or $\alpha_{k,R}$ is close to zero, in order to ensure the positivity of all the intermediate densities. 
\end{remark}

\subsubsection{The resolution strategy: a fixed-point research}

Following the ideas developed in \cite{coq-13-rob}, the resolution of the Riemann problem \eqref{sys:multi:relax}-\eqref{sys:multi:relax:CI} is based on a fixed-point research which formally amounts to iterating on a two step procedure involving the pair  $\left ( u_1^*,\Pi^* \right )\in\R\times\R^{N-1}$. We first remark that system \eqref{sys:multi:relax:solve} can be written in the following form: 
\begin{equation*}
\begin{aligned}
& \dv_t \alpha_1 + u_1^* \dv_x \alpha_1 = 0,  \\
(\mcal{S}_1) \qquad & \dv_t \paren{\alpha_1 \rho_1} + \dv_x \paren{\alpha_1 \rho_1 u_1}= 0,  \\
& \textstyle \dv_t \paren{\alpha_1 \rho_1 u_1} + \dv_x \paren{\alpha_1 \rho_1 u_1^2+\alpha_1 \pi_1(\tau_1,\T_1)}+\sum_{l=2}^N \pi_l^*\dv_x \alpha_l= 0,\\
& \dv_t \paren{\alpha_1 \rho_1 \T_1} + \dv_x \paren{\alpha_1 \rho_1 \T_1 u_1}= 0,\\[2ex]
\text{and for $k=2,..,N$:} & \\[1ex]
& \dv_t \alpha_k + u_1^* \dv_x \alpha_k = 0,  \\
(\mcal{S}_k) \qquad & \dv_t \paren{\alpha_k \rho_k} + \dv_x \paren{\alpha_k \rho_k u_k}= 0,  \\
& \textstyle \dv_t \paren{\alpha_k \rho_k u_k} + \dv_x \paren{\alpha_k \rho_k u_k^2+\alpha_k \pi_k(\tau_k,\T_k)}- \pi_k^*\dv_x \alpha_k= 0,\\
& \dv_t \paren{\alpha_k \rho_k \T_k} + \dv_x \paren{\alpha_k \rho_k \T_k u_k}= 0.
\end{aligned}
\end{equation*}
 
\bigskip

\noindent \textbf{First step:} The family of interface pressures $\Pi^*=(\pi_2^*,..,\pi_N^*)\in\R^{N-1}$ defining the non-conservative products $\pi_k^*\dv_x\alpha_k=\pi_k^* \Delta \alpha_k \delta_0(x-u_1^*t)$ for $k=2,..,N$ are assumed to be known. Hence, system $(\mcal{S}_1)$, which gathers the governing equations for phase 1, is completely independent of the other phases since the non-conservative terms can now be seen as a \emph{known source term} and the Riemann problem for $(\mcal{S}_1)$ can therefore be solved independently of the other phases. Observe that system $(\mcal{S}_1)$ is very similar to \cite[System (4.16)]{coq-13-rob} in the two phase flow framework. This Riemann problem is easily solved since $(\mcal{S}_1)$ is a hyperbolic system with a source term which is a Dirac mass supported by the kinematic wave of velocity $u_1^*$. Hence, there is no additional wave due to the source term.

\medskip
Consequently, knowing a prediction of the interface pressures $\Pi^*=(\pi_2^*,..,\pi_N^*)\in\R^{N-1}$, one can explicitly compute the value of the kinematic speed $u_1^*$ by solving the Riemann problem associated with phase 1. This first step enables to define a function:
\begin{equation}
\label{fonction_f}
\mathscr{F}[\W_{L},\W_{R};a_1]:
\left \lbrace
\begin{array}{lll}
  \R^{N-1} & \longrightarrow & \R \\
  \Pi^*=(\pi_2^*,..,\pi_N^*)& \longmapsto & u_1^*.
\end{array}
\right.
\end{equation}

\noindent \textbf{Second step:} The advection velocity $u_1^*$ of the phase fractions $\alpha_k$ is assumed to be a known constant. Thus, for all $k=2,..,N$, the governing equations for phase $k$, gathered in system $(\mcal{S}_k)$, form a system which is independent of all the other systems $(\mcal{S}_l)$ with $l=1,..,N$, $l\neq k$. 
In addition to the kinematic velocity $u_k$ and the acoustic speeds $u_k\pm a_k\tau_k$, the Riemann problem for $(\mcal{S}_k)$ involves an \textit{additional wave} whose \textit{known} constant velocity is $u_1^*$. This wave is weighted with an \textit{unknown} weight $\pi_k^* \Delta \alpha_k$ (only for the momentum equation) which is calculated by solving the Riemann problem for $(\mcal{S}_k)$ and then applying Rankine-Hugoniot's jump relation to the momentum equation for the traveling wave of speed $u_1^*$. Here again, we observe that system $(\mcal{S}_k)$ is the exact same system already solved in the two phase flow framework (see \cite[System (4.20)]{coq-13-rob}). This is what justifies the straightforward extension of the relaxation scheme to the multiphase flow model. Solving all these Riemann problems for $(\mcal{S}_k)$ with $k=2,..,N$, this second step allows to define a function :
\begin{equation}
\label{fonction_g}
\mathscr{G}[\W_{L},\W_{R};(a_k)_{k=2,..,N}]:
\left \lbrace
\begin{array}{lll}
  \R & \longrightarrow & \R^{N-1} \\
   u_1^* & \longmapsto & \Pi^*=(\pi_2^*,..,\pi_N^*)  .
\end{array}
\right.
\end{equation}

\noindent \textbf{Fixed-point research:} Performing an iterative procedure on these two steps actually boils down to the following fixed-point research.

\medskip
\textit{Find $u_1^*$ in the interval $\paren{\max\limits_{k=1,..,N} \left \lbrace u_{k,L}-a_{k}\tau_{k,L}\right \rbrace,\min\limits_{k=1,..,N} \left \lbrace u_{k,R}+a_{k}\tau_{k,R}\right \rbrace } $ such that} 
\begin{equation}
\label{fp0}
u_1^* = \Big ( \mathscr{F}[\W_{L},\W_{R};a_1] \circ  \mathscr{G}[\W_{L},\W_{R};(a_k)_{k=2,..,N}] \Big )(u_1^* ).
\end{equation}
The interval in which $u_1^*$ must be sought corresponds to the subsonic condition (\ref{subsol}) on the one hand, and to the positivity of the intermediate states of phase 1 on the other hand.

\medskip
Let us now introduce some notations which depend explicitly and solely on the initial states $(\W_L,\W_R)$ and on the relaxation parameters $(a_k)_{k=1,..,N}$. For $k=1,..,N$:
\begin{equation}
\label{diese}
\begin{aligned}
\udd_k &= \dfrac{1}{2} \left (u_{k,L}+u_{k,R} \right )-\dfrac{1}{2a_k} \left (\pi_k(\tau_{k,R},\T_{k,R}) - \pi_k(\tau_{k,L},\T_{k,L}) \right ),
\\
\pidd_k &= \dfrac{1}{2} \left ( \pi_k(\tau_{k,R},\T_{k,R}) + \pi_k(\tau_{k,L},\T_{k,L}) \right )- \dfrac{a_k}{2} \left (u_{k,R}- u_{k,L} \right ),
\\
\tdd_{k,L} &= \tau_{k,L} + \dfrac{1}{a_k}(\udd_k - u_{k,L}), 
\\
\tdd_{k,R} &= \tau_{k,R} - \dfrac{1}{a_k}(\udd_k - u_{k,R}).
\end{aligned}
\end{equation} 

\begin{remark}
\label{rem:diese}
Observe that with these definitions, $u_{k,L}-a_{k}\tau_{k,L}=\udd_k-a_k \tdd_{k,L}$ and $u_{k,R}+a_{k}\tau_{k,R}=\udd_k+a_k \tdd_{k,R}$.
\end{remark}

\subsubsection{First step of the fixed-point procedure: solving phase 1}
\label{subsec:multi:step1}
In this first step, the interface pressures $\Pi^*=(\pi_2^*,..,\pi_N^*)\in\R^{N-1}$ defining the non-conservative products $\pi_k^*\dv_x\alpha_k=\pi_k^* \Delta \alpha_k \delta_0(x-u_1^*t)$ for $k=2,..,N$ are first assumed to be known and one solves the Riemann problem for $(\mcal{S}_1)$ with the initial condition
\begin{equation}
\label{multi:step1:CI}
\W_1(x,t=0)=
\left\lbrace
\begin{array}{ll}
\W_{1,L}\qquad \textnormal{if} \qquad x<0,\\
\W_{1,R} \qquad \textnormal{if} \qquad x>0,
\end{array}
        \right.
\end{equation}
where $\W_1 = (\alpha_1,\alpha_1\rho_1,\alpha_1\rho_1 u_1,\alpha_1\rho_1\T_1)$ denotes the state vector for phase 1, and $(\W_{1,L},\W_{1,R})$ are the restriction of the complete initial data $(\W_{L},\W_{R})$ to the phase 1 variables.
Observe that the non-conservative products $\pi_k^*\dv_x\alpha_k$ are not ambiguous here since for all $k=2,..,N$, $\pi_k^*$ is a \textbf{known constant}. System $(\mcal{S}_1)$ is very similar to \cite[System (4.16)]{coq-13-rob} encountered in the two phase flow framework, and the resolution of the corresponding Riemann problem follows from the exact same steps. Therefore, we only state the main results and the reader is referred to \cite[Section 4.4]{coq-13-rob} for the proofs.

\medskip
The following proposition characterizes the convective behavior of system $(\mcal{S}_1)$.
\begin{prop}
System $(\mcal{S}_1)$ is a hyperbolic system with linearly degenerate fields associated with the eigenvalues $u_1-a_1 \tau_1$, $u_1$ and $u_1+a_1 \tau_1$. The eigenvalue $u_{1}$ has multiplicity 2.
\end{prop}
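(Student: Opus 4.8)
The plan is to put $(\mcal{S}_1)$ into quasilinear form in well-chosen nonconservative variables and read off the spectral structure directly. First I would observe that in this step $u_1^*$ and the $(\pi_l^*)_{l=2,..,N}$ are frozen constants, so the term $\sum_{l=2}^N \pi_l^*\dv_x\alpha_l$ involves only phase fractions \emph{external} to phase $1$: it is a source supported on the wave $\lbrace x=u_1^* t\rbrace$ and does not enter the Jacobian of the phase-$1$ subsystem. Hence the eigenstructure is that of the homogeneous quasilinear system carried by $\W_1=(\alpha_1,\alpha_1\rho_1,\alpha_1\rho_1 u_1,\alpha_1\rho_1\T_1)$, the only genuine nonconservative contribution being the $\pi_1\dv_x\alpha_1$ piece hidden inside $\dv_x(\alpha_1\pi_1)$.

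Next I would pass to the variables $(\alpha_1,u_1,\pi_1,\T_1)$ and derive their transport equations, writing $D_t=\dv_t+u_1\dv_x$. Combining the mass equation with the advection $\dv_t\alpha_1+u_1\dv_x\alpha_1=0$ gives the Lagrangian mass law $D_t\tau_1=\tau_1\dv_x u_1$; combining the mass equation with the $\alpha_1\rho_1\T_1$ equation gives $D_t\T_1=0$; subtracting $u_1$ times the mass equation from the momentum equation gives $\rho_1 D_t u_1+\alpha_1^{-1}\dv_x(\alpha_1\pi_1)=0$, that is $D_t u_1+\tau_1\dv_x\pi_1+\tfrac{\pi_1\tau_1}{\alpha_1}\dv_x\alpha_1=0$; and finally, differentiating the linearized law $\pi_1=\PP_1(\T_1)+a_1^2(\T_1-\tau_1)$ and inserting $D_t\T_1=0$ together with the mass law yields the clean relation $D_t\pi_1+a_1^2\tau_1\dv_x u_1=0$. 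The gain from this change of variables is that $\alpha_1$ and $\T_1$ are each simply advected at speed $u_1$, while $(u_1,\pi_1)$ obey an acoustic $2\times2$ subsystem coupled to $\alpha_1$ only through one lower-triangular entry.

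Assembling these four equations into $\dv_t Y+A(Y)\dv_x Y=0$ with $Y=(\alpha_1,u_1,\pi_1,\T_1)$ produces a matrix whose characteristic polynomial factors as $(u_1-\lambda)^2\,[(u_1-\lambda)^2-a_1^2\tau_1^2]$, giving at once the four real eigenvalues $u_1-a_1\tau_1$, $u_1$ (double) and $u_1+a_1\tau_1$. I would then exhibit the eigenvectors $(0,1,\pm a_1,0)$ for the acoustic fields $u_1\pm a_1\tau_1$, and the two independent vectors $(\alpha_1,0,-\pi_1,0)$ and $(0,0,0,1)$ for $\lambda=u_1$; since $a_1\tau_1>0$ keeps the three distinct eigenvalues strictly ordered and the double eigenvalue carries geometric multiplicity $2$, the full set of four eigenvectors spans $\R^4$ and the system is hyperbolic. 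Linear degeneracy of every field then follows from checking $\nabla_Y\lambda\cdot r=0$: for $\lambda=u_1$ one has $\nabla_Y u_1=(0,1,0,0)$, orthogonal to both associated eigenvectors; for the acoustic fields one uses $\tau_1=\tau_1(\pi_1,\T_1)$ with $\dv_{\pi_1}\tau_1=-a_1^{-2}$, so that $\nabla_Y(u_1\pm a_1\tau_1)\cdot(0,1,\pm a_1,0)=1-1=0$.

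The only genuinely delicate point is the bookkeeping of the nonconservative products: one must keep the $\pi_1\dv_x\alpha_1$ term, which is internal to phase $1$ and responsible for the $\alpha_1$–$u_1$ coupling in $A$, while discarding $\sum_{l\geq2}\pi_l^*\dv_x\alpha_l$ as an external Dirac source, and one must derive the $\pi_1$-equation in flux-free form using that $\T_1$ is advected. Once the matrix is correctly written, the factorization and the degeneracy checks are routine; they reproduce verbatim the two-phase computation of \cite{coq-13-rob}, which is why citing that reference suffices.
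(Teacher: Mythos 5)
Your proof is correct and follows essentially the same route as the paper's, which simply defers to the appendix computation for the full relaxation system: pass to nonconservative variables, read off the (block-triangular) quasilinear matrix, factor the characteristic polynomial into $(u_1-\lambda)^2\left[(u_1-\lambda)^2-a_1^2\tau_1^2\right]$, exhibit the four independent eigenvectors, and check $\nabla\lambda\cdot r=0$ field by field. The only cosmetic difference is your choice of $\pi_1$ in place of $\rho_1$ as the acoustic variable, which you handle correctly via $\partial_{\pi_1}\tau_1=-a_1^{-2}$ in the linear-degeneracy check.
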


\begin{proof}
The proof is similar to that of Proposition \ref{prop:spectre:multi:relax} which is given in Appendix \ref{app:sec:spectre:multi:relax}.
\end{proof}

We have the following well-posedness result for the governing equations of phase 1. The Riemann problem $(\mcal{S}_1)$-\eqref{multi:step1:CI} differs from the Riemann problem in the two phase case in  \cite[Prop. 4.4]{coq-13-rob} only by the value of the source term $\sum_{l=2}^N \pi_l^*\dv_x \alpha_l$. Hence, the proof follows from similar steps as the proof of \cite[Prop. 4.4]{coq-13-rob}.

\begin{prop}
\label{multi:prop:step1}
Assume that the parameter $a_1$ is such that $\tdd_{1,L} >0$ and $\tdd_{1,R} >0$. Then the Riemann problem $(\mcal{S}_1)$-\eqref{multi:step1:CI} admits a unique solution the intermediate states of which are defined by:
\begin{center}
\begin{tikzpicture}[scale=3]
\small
\tikzstyle{axes}=[thin,>=latex]
\begin{scope}[axes]
        \draw[->] (-1,0)--(1,0) node[right=3pt] {$x$};
        \draw[->] (0,0)--(0,1) node[left=3pt] {$t$};
	\draw [very thick,color=red] (0,0) -- (20:1cm) node [color=black, above=1pt] {$\qquad u_{1,R}+a_1\tau_{1,R}$};
        \draw [very thick,color=red] (0,0) -- (80:0.8cm) node [color=black, above=2pt] {$u_1^*$};
	\draw [very thick,color=red] (0,0) -- (150:1cm) node [color=black, above=1pt] {$u_{1,L}-a_1\tau_{1,L}$};
	\draw (-0.6,0.1) node[] {$\W_{1,L}$};
	\draw (-0.25,0.6) node[] {$\W_{1}^-$};
	\draw (0.4,0.6) node[] {$\W_{1}^+$};
	\draw (0.7,0.1) node[] {$\W_{1,R}$};
\end{scope}
\normalsize
\end{tikzpicture}
\end{center}

\begin{equation}
\label{eq:intstate:1}
\begin{array}{llll}
\alpha_{1}^- = \alpha_{1,L},
&\quad \tau_1^- = \tdd_{1,L} +\frac{1}{a_1} (u_1^*-\udd_1),
&\quad u_1^+=u_1^*,
&\quad\T_1^- = \T_{1,L}, \\[2ex]
\alpha_{1}^+ = \alpha_{1,R},
&\quad \tau_1^+ = \tdd_{1,R} -\frac{1}{a_1} (u_1^*-\udd_1),
&\quad u_1^+=u_1^*,
&\quad \T_1^+ = \T_{1,R},
\end{array}
\end{equation}
where
\begin{equation}
\label{multi:u1star}
 u_1^* =  \udd_1 + \dfrac{1}{a_1(\alpha_{1,L}+\alpha_{1,R})} \sum_{l=2}^N(\pidd_1-\pi_l^*)\Delta \alpha_l.
\end{equation}
The intermediate densities $\rho_{1}^-$ and $\rho_{1}^+$ are positive if and only if
\begin{equation}
\label{pos1}
 u_{1,L}-a_1\tau_{1,L} < u_1^* < u_{1,R}+a_1\tau_{1,R}.
\end{equation}
Moreover, this unique solution satisfies the following energy equation in the usual weak sense:
\begin{equation}
\label{enerphi1}
 \textstyle \dv_t \paren{ \alpha_1\rho_1 \E_1} + \dv_x  \paren{\alpha_1\rho_1 \E_1 u_1+ \alpha_1 \pi_1 u_1} + u_1^* \sum_{l=2}^N  \pi_l^*  \dv_x \alpha_l =0.
\end{equation} 
\end{prop}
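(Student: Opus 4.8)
The plan is to construct the unique subsonic solution directly from the three-wave pattern forced by the preceding proposition: a left acoustic contact of speed $u_{1,L}-a_1\tau_{1,L}$, the $u_1^*$-contact, and a right acoustic contact of speed $u_{1,R}+a_1\tau_{1,R}$, separating the two unknown states $\W_1^-$ and $\W_1^+$. Because every field of $(\mcal{S}_1)$ is linearly degenerate, each wave is a contact across which the strong Riemann invariants are preserved, so the problem is purely algebraic once $u_1^*$ is known. The invariants I would use are exactly those of \cite[System (4.16)]{coq-13-rob}: $\alpha_1$ is advected at $u_1^*$ and $\T_1$ at $u_1$, so both can jump only across the $u_1^*$-contact, forcing $\alpha_1^-=\alpha_{1,L}$, $\T_1^-=\T_{1,L}$, $\alpha_1^+=\alpha_{1,R}$, $\T_1^+=\T_{1,R}$; the quantity $a_1u_1+\pi_1$ is preserved across the left acoustic wave and $a_1u_1-\pi_1$ across the right one; and $u_1$ is continuous across the $u_1^*$-contact, where it equals $u_1^*$.

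First I would exploit the two acoustic invariants. Imposing $u_1^-=u_1^+=u_1^*$ yields $\pi_1^-=\pi_{1,L}+a_1(u_{1,L}-u_1^*)$ and $\pi_1^+=\pi_{1,R}+a_1(u_1^*-u_{1,R})$; then the linearized equation of state \eqref{multi:press_relax} together with $\T_1^\pm$ gives the intermediate specific volumes, which upon inserting the definitions \eqref{diese} reduce to $\tau_1^-=\tdd_{1,L}+\frac{1}{a_1}(u_1^*-\udd_1)$ and $\tau_1^+=\tdd_{1,R}-\frac{1}{a_1}(u_1^*-\udd_1)$, i.e. exactly \eqref{eq:intstate:1}.

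Next I would pin down $u_1^*$ through the single jump relation carrying the source. The non-conservative product is $\sum_{l=2}^N\pi_l^*\dv_x\alpha_l=\big(\sum_{l=2}^N\pi_l^*\Delta\alpha_l\big)\delta_0(x-u_1^*t)$ (all fractions being advected at $u_1^*$), and the mass, $\T_1$ and $\alpha_1$ relations across the $u_1^*$-contact are satisfied automatically once $u_1=u_1^*$ there. Applying Rankine-Hugoniot to the momentum equation and using $u_1=u_1^*$ on both sides to cancel the convective flux leaves the scalar identity $\alpha_{1,R}\pi_1^+-\alpha_{1,L}\pi_1^-+\sum_{l=2}^N\pi_l^*\Delta\alpha_l=0$. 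Substituting the expressions for $\pi_1^\pm$ gives a linear equation for $u_1^*$; rewriting $\sum_{l=2}^N\Delta\alpha_l=-\Delta\alpha_1$ via the saturation constraint and recognising $\udd_1,\pidd_1$ turns its solution into exactly \eqref{multi:u1star}. Uniqueness follows since, within the prescribed subsonic ordering, this value of $u_1^*$ and the invariants determine every intermediate state.

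Finally, positivity is read off \eqref{eq:intstate:1}: $\tau_1^->0$ is equivalent to $u_1^*>u_{1,L}-a_1\tau_{1,L}$ and $\tau_1^+>0$ to $u_1^*<u_{1,R}+a_1\tau_{1,R}$, which together are \eqref{pos1}; the hypotheses $\tdd_{1,L}>0$ and $\tdd_{1,R}>0$ ensure, via Remark \ref{rem:diese}, that this interval is nonempty. The energy identity \eqref{enerphi1} I would obtain as in Proposition \ref{prop:ener:multi:relax}: since all fields are linearly degenerate and the constructed states satisfy every jump relation, phase-1 energy is transported without dissipation, and a direct check of the Rankine-Hugoniot relation for \eqref{multi:relax:ener1} across the $u_1^*$-wave---using $u_1=u_1^*$ and the source $u_1^*\sum_{l=2}^N\pi_l^*\Delta\alpha_l$---confirms it holds weakly. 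The main obstacle is the careful bookkeeping of the generalized source $\sum_{l=2}^N\pi_l^*\dv_x\alpha_l$ at the contact and its algebraic collapse to \eqref{multi:u1star}; everything else transcribes the two-phase argument of \cite[Section 4.4]{coq-13-rob} verbatim.
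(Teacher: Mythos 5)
Your proposal is correct and follows essentially the same route as the paper's proof: linear degeneracy of all fields, the Riemann invariants $a_1u_1+\pi_1$ and $a_1u_1-\pi_1$ across the acoustic waves together with the advected quantities $\alpha_1,\T_1$, and the Rankine--Hugoniot relation for the momentum equation across the $u_1^*$-wave with $\sum_{l=2}^N\pi_l^*\Delta\alpha_l$ as a known source, which collapses to \eqref{multi:u1star} exactly as you compute. The paper merely sketches these steps and defers to \cite[Prop.~4.4]{coq-13-rob}; your write-up supplies the same argument in more detail, and the positivity and energy verifications match.
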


\begin{proof}
The proof consists in solving a system with the unknowns $\W_1^-$ and $\W_1^+$. All the fields are linearly degenerated. The components of these state vectors are linked to each other through the Riemann invariants of the $u_1^*$-wave. The quantity $\sum_{l=2}^N \pi_l^*\dv_x \alpha_l$ is a given source term in the right hand side of the Rankine-Hugoniot relation associated with the momentum equation for this wave. $\W_1^-$ and $\W_{1,L}$ are connected through the Riemann invariants of the $\lbrace u_1-a_1\tau_1 \rbrace$-wave.  $\W_1^+$ and $\W_{1,R}$ are connected through the Riemann invariants of the $\lbrace u_1+a_1\tau_1\rbrace$-wave. We refer to \cite[Prop. 4.4]{coq-13-rob} for more details.
\end{proof}

The expression of $u_1^*$ given in equation \eqref{multi:u1star} defines the function $\mathscr{F}[\W_{L},\W_{R};a_1]$ introduced in \eqref{fonction_f}, since $u_1^*$ is expressed as a function of $\Pi^*=(\pi_2^*,..,\pi_N^*)\in\R^{N-1}$. A convenient reformulation of \eqref{multi:u1star} is the following:
\begin{equation}
\label{multi:step1:jumpmom}
\sum_{l=2}^N \pi_l^* \Delta \alpha_l =  \sum_{l=2}^N \pidd_1 \Delta \alpha_l - \theta_1(u_1^*), 
\end{equation}
where
\begin{equation}
\label{multi:def:theta1}
\theta_1(u_1^*) =  a_1\paren{\alpha_{1,L}+\alpha_{1,R}} (u_1^* -  \udd_1). 
\end{equation}

\subsubsection{Second step of the fixed-point procedure: solving phase $k$, for $k=2,..,N$}
\label{subsec:multi:stepk}
In this second step, the transport velocity $u_1^*$ of the phase fractions $\alpha_k$ is assumed to be known, while the vector of interface pressures $\Pi^*=(\pi_2^*,..,\pi_N^*)\in\R^{N-1}$ defining the non-conservative products $\pi_k \dv_x \alpha_k = \pi_k^* \Delta \alpha_k \delta_0(x-u_1^*t)$ is an unknown that must be calculated by solving the $N-1$ independent Riemann problems for $(\mcal{S}_k)$, for $k=2,..,N$ with the initial condition
\begin{equation}
\label{multi:stepk:CI}
\W_k(x,t=0)=
\left\lbrace
\begin{array}{ll}
\W_{k,L}\qquad \textnormal{if} \qquad x<0,\\
\W_{k,R} \qquad \textnormal{if} \qquad x>0,
\end{array}
        \right.
\end{equation}
where $\W_k = (\alpha_k,\alpha_k\rho_k,\alpha_k\rho_k u_k,\alpha_k\rho_k\T_k)$ denotes the state vector for phase $k$, and $(\W_{k,L},\W_{k,R})$ are the restriction of the complete initial data $(\W_{L},\W_{R})$ to the phase $k$ variables. System $(\mcal{S}_k)$ is the exact same system as \cite[System (4.20)]{coq-13-rob} encountered in the two phase flow framework, and the resolution of the corresponding Riemann problem follows from the exact same steps. Here again, we only state the main results and the reader is referred to \cite[Section 4.5]{coq-13-rob} for the detailed proofs.

\medskip
Once the resolution is done, applying Rankine-Hugoniot's jump relation to the momentum equation of $(\mcal{S}_k)$ gives the expression of $\pi_k^* \Delta \alpha_k$. 

\begin{prop}
For every $k=2,..,N$, system $(\mcal{S}_k)$ admits four real eigenvalues that are $u_k-a_k \tau_k$, $u_k$, $u_k+a_k \tau_k$ and $u_1^*$. All the fields are linearly degenerate and the system is hyperbolic if, and only if $|u_k-u_1^*| \neq  a_k\tau_k$. 
\end{prop}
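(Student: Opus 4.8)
The plan is to follow the proof of Proposition~\ref{prop:spectre:multi:relax}: since $u_1^*$ and $\pi_k^*$ are \emph{known constants} in $(\mcal{S}_k)$, the system is a genuine $4\times 4$ quasilinear system and I would analyze it in the primitive variables $Y=(\alpha_k,\tau_k,u_k,\T_k)^T$, a smooth change of variables on $\Omega_\W$ which leaves the eigenvalues and the linearly degenerate / genuinely nonlinear character of each field unchanged. First I would combine the phase-$k$ mass balance with the $\alpha_k$- and $\T_k$-equations to observe that $\alpha_k$ is advected at the constant speed $u_1^*$ and that $\T_k$ is advected at $u_k$. Expanding the momentum equation with the help of the mass balance and of $\pi_k(\tau_k,\T_k)=\PP_k(\T_k)+a_k^2(\T_k-\tau_k)$ from \eqref{multi:press_relax} then puts $(\mcal{S}_k)$ into the form $\dv_t Y + B_k\,\dv_x Y = 0$ with (writing $m_k:=\alpha_k\rho_k$)
\begin{equation*}
B_k = \begin{pmatrix} u_1^* & 0 & 0 & 0 \\[1ex] -\dfrac{u_k-u_1^*}{m_k} & u_k & -\tau_k & 0 \\[1ex] \dfrac{\pi_k-\pi_k^*}{m_k} & -a_k^2\tau_k & u_k & \tau_k\big(\PP_k'(\T_k)+a_k^2\big) \\[1ex] 0 & 0 & 0 & u_k \end{pmatrix}.
\end{equation*}

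The eigenvalues then follow at once: expanding $\det(B_k-\lambda I)$ along the first row gives the factorization
\begin{equation*}
\det(B_k-\lambda I) = (u_1^*-\lambda)\,(u_k-\lambda)\,\big[(u_k-\lambda)^2 - a_k^2\tau_k^2\big],
\end{equation*}
whose roots are exactly $u_1^*$, $u_k$ and $u_k\pm a_k\tau_k$, all real (and, as expected for a Suliciu linearization, independent of $\PP_k'$). For the linear degeneracy I would exhibit a right eigenvector $r$ for each field and verify $\nabla_Y\sigma\cdot r\equiv 0$. The $u_1^*$-field is immediate since $u_1^*$ is a constant; for the other three, assuming $u_1^*\neq u_k,\,u_k\pm a_k\tau_k$, one finds eigenvectors with vanishing $\alpha_k$-component, namely $r\propto(0,\PP_k'+a_k^2,0,a_k^2)$ for $u_k$ and $r\propto(0,1,\mp a_k,0)$ for $u_k\pm a_k\tau_k$, and the contractions $\nabla_Y\sigma\cdot r$ then cancel identically exactly as in the single-phase Suliciu computation underlying Proposition~\ref{prop:spectre:multi:relax}.

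The only genuinely delicate point — and the crux of the \emph{iff} — is the eigenvector associated with $\lambda=u_1^*$, the one carrying a nonzero $\alpha_k$-component. Normalizing $r_\alpha=1$ and noting $r_{\T}=0$ is forced when $u_k\neq u_1^*$, rows~2 and~3 of $(B_k-u_1^*I)r=0$ reduce to a linear $2\times 2$ system for $(r_\tau,r_u)$ whose determinant is precisely $(u_k-u_1^*)^2 - a_k^2\tau_k^2$. When $|u_k-u_1^*|\neq a_k\tau_k$ this determinant is nonzero, the fourth eigenvector exists and is independent from the previous three (it alone has $r_\alpha\neq 0$), so $B_k$ is diagonalizable and the system is hyperbolic; when $|u_k-u_1^*|= a_k\tau_k$, the eigenvalue $u_1^*$ merges with one of the acoustic eigenvalues $u_k\pm a_k\tau_k$, the $2\times 2$ system becomes singular, the associated eigenspace is deficient, and the family of right eigenvectors fails to span $\R^4$. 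I expect this resonance analysis to be the main obstacle, since it requires checking carefully that the geometric multiplicity drops exactly on the set $|u_k-u_1^*|=a_k\tau_k$; everything else reduces to the single-phase Suliciu eigenstructure already established for Proposition~\ref{prop:spectre:multi:relax}.
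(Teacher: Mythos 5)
Your proof is correct and takes essentially the same route as the paper: the paper's own proof of this proposition simply defers to the primitive-variable eigenstructure computation of Proposition~\ref{prop:spectre:multi:relax} given in Appendix~\ref{app:sec:spectre:multi:relax}, and your $4\times4$ matrix $B_k$, its characteristic polynomial, and the eigenvector/resonance discussion are exactly the specialization of that computation to the single subsystem $(\mathcal{S}_k)$ with $u_1^*$ frozen to a constant (your entries and eigenvectors check out, including the $2\times2$ determinant $(u_k-u_1^*)^2-a_k^2\tau_k^2$ governing the loss of hyperbolicity). The only differences are cosmetic — you use $\tau_k$ where the appendix uses $\rho_k$, and you rightly observe that linear degeneracy of the $u_1^*$-field is immediate here since the eigenvalue is a constant — and your treatment of the resonant case is at the same level of detail as the paper's.
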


\begin{proof}
The proof is similar to that of Proposition \ref{prop:spectre:multi:relax} which is given in Appendix \ref{app:sec:spectre:multi:relax}.
\end{proof}

We search for Riemann solutions which comply with the subsonic relative speed constraint \eqref{subsol}. Such solutions are of three types depending on the relative wave ordering between the eigenvalues $u_k$ and $u_1^*$:
\begin{center}
\begin{tikzpicture}[scale=2.5]
\small
\tikzstyle{axes}=[thin,>=latex]
\begin{scope}[axes]
\draw (-2.3,0)   node {
	\begin{tikzpicture}[scale=2]
	\draw[->] (-1,0)--(1,0) node[right=3pt] {$x$};
        \draw[->] (0,0)--(0,1) node[left=3pt] {$t$};
	\draw [very thick,color=red] (0,0) -- (30:1cm) node[color=black,above] {$u_{k,R}+a_k\tau_{k,R}$};
        \draw [very thick,color=red] (0,0) -- (80:1cm) node[color=black,above] {$u_k^*$};
	\draw [dashed,very thick,color=red] (0,0) -- (120:1cm) node[color=black,above] {$u_1^*$};
	\draw [very thick,color=red] (0,0) -- (155:1cm) node[color=black,above] {$u_{k,L}-a_k\tau_{k,L}$};
	\draw (-0.6,0.1) node[] {$\W_{k,L}$};
	\draw (-0.4,0.4) node[] {$\W_{k}^-$};
	\draw (-0.15,0.6) node[] {$\W_{k}^+$};
	\draw (0.3,0.4) node[] {$\W_{k,R*}$};
	\draw (0.7,0.1) node[] {$\W_{k,R}$};
	\draw (0,-0.2) node[] {Wave ordering $u_k>u_1$};%
	\end{tikzpicture}
};
\draw (0,0)   node {
	\begin{tikzpicture}[scale=2]
	\draw[->] (-1,0)--(1,0) node[right=3pt] {$x$};
        \draw[->] (0,0)--(0,1) node[left=3pt] {$t$};
	\draw [very thick,color=red] (0,0) -- (30:1cm) node[color=black,above] {$u_{k,R}+a_k\tau_{k,R}$};
        \draw [very thick,color=red] (0,0) -- (120:1cm) node[color=black,above] {$u_k^*$};
	\draw [dashed,very thick,color=red] (0,0) -- (80:1cm) node[color=black,above] {$u_1^*$};
	\draw [very thick,color=red] (0,0) -- (155:1cm) node[color=black,above] {$u_{k,L}-a_k\tau_{k,L}$};
	\draw (-0.6,0.1) node[] {$\W_{k,L}$};
	\draw (-0.1,0.6) node[] {$\W_{k}^-$};
	\draw (0.3,0.4) node[] {$\W_{k}^+$};
	\draw (-0.35,0.3) node[] {$\W_{k,L*}$};
	\draw (0.7,0.1) node[] {$\W_{k,R}$};
	\draw (0,-0.2) node[] {Wave ordering $u_k<u_1$};%
	\end{tikzpicture}
};
\draw (2.3,0)   node {
	\begin{tikzpicture}[scale=2]
	\draw[->] (-1,0)--(1,0) node[right=3pt] {$x$};
        \draw[->] (0,0)--(0,1) node[left=3pt] {$t$};
	\draw [very thick,color=red] (0,0) -- (30:1cm) node[color=black,above] {$u_{k,R}+a_k\tau_{k,R}$};
        \draw [very thick,color=red] (0,0) -- (80:1cm) node[color=black,above] {$u_k^*=u_1^*$};
	\draw [very thick,color=red] (0,0) -- (155:1cm) node[color=black,above] {$u_{k,L}-a_k\tau_{k,L}$};
	\draw (-0.6,0.1) node[] {$\W_{k,L}$};
	\draw (-0.15,0.6) node[] {$\W_{k}^-$};
	\draw (0.3,0.4) node[] {$\W_{k}^+$};
	\draw (0.7,0.1) node[] {$\W_{k,R}$};
	\draw (0,-0.2) node[] {Wave ordering $u_k=u_1$};%
	\end{tikzpicture}
};
\end{scope}
\normalsize
\end{tikzpicture}
\end{center}

We denote $\xi\mapsto\W_k(\xi;\W_{k,L},\W_{k,R})$ the self-similar mapping defining this solution and we may now recall the following result stated in \cite[Prop. 4.8]{coq-13-rob} in the framework of the Baer-Nunziato two phase flow model.
\begin{prop}
\label{multi:prop:stepk}
Assume that $a_k$ is such that $\tdd_{k,L} >0$ and $\tdd_{k,R} >0$ and define for $(\nu,\omega)\in\R_+^{*}\times\R_+^*$ the two-variable function:
\begin{equation}
\label{thefamousM0}
\M_0(\nu,\omega) = \dfrac{1}{2} \left( \dfrac{1+\omega^2}{1-\omega^2} \left(
1+
\dfrac{1}{\nu} \right ) - \sqrt{\left(
\dfrac{1+\omega^2}{1-\omega^2}\right )^2 \left( 1+ \dfrac{1}{\nu}
\right )^2 -\dfrac{4}{\nu} }\right ), 
\end{equation}
which can be extended by continuity to $\omega=1$ by setting $\M_0(\nu,1)=0$.
\begin{itemize}
 \item The Riemann problem $(\mcal{S}_k)$-\eqref{multi:stepk:CI} admits self-similar solutions $\xi\mapsto\W_k(\xi;\W_{k,L},\W_{k,R})$ with the subsonic wave ordering $u_k-a_k \tau_k < u_1^*< u_k < u_k+a_k \tau_k$, if and only if
\begin{equation}
\udd_k-u_1^* \geq 0 \qquad \text{and} \qquad \udd_k-a_k \tdd_{k,L} < u_1^*.
\end{equation}
These solutions are parametrized by a real number $\M$
and the intermediate states are given by:
\begin{equation}
\label{eq:intstate:k}
\begin{aligned}
\tau_k^- &= \tdd_{k,L} \dfrac{ 1 - \Me_k}{1 -\M},   &u_k^- =& \ u_1^*+ a_k\M
\tau_k^-,
&\T_k^- =& \ \T_{k,L},  \\
\tau_k^+ &=\tdd_{k,L} \dfrac{ 1 + \Me_k }{1 + \nu_k \M},   &u_k^+ =& \ u_1^*+\ \nu_k
a_k \M
\tau_k^+, &\T_k^+ =& \ \T_{k,L},  \\
\tau_{k,R*} &= \tdd_{k,R} + \tdd_{k,L} \dfrac{\Me_k - \nu_k \M}{1 + \nu_k \M},
&u_{k,R*} =&  \ u_1^*+ \nu_k a_k \M \tau_k^+,  &\T_{k,R*} =& \ \T_{k,R}. 
\end{aligned}
\end{equation}
where
$\nu_k = \dfrac{\alpha_{k,L}}{\alpha_{k,R}}$, $\Me_k=\dfrac{\udd_k-u_1^*}{a_k \tdd_{k,L}}$, and $\M$ varies in the interval $(0 ,\M_0(\nu_k,\omega_k)]$ where $\omega_k=\dfrac{1-\Me_k}{1+\Me_k}$.
These solutions satisfy the following energy identity:
\begin{equation}
\label{enerphik}
\dv_t \paren{ \alpha_k\rho_k \E_k} + \dv_x  \paren{\alpha_k\rho_k \E_k u_k+ \alpha_k \pi_k u_k} - u_1^* \pi_k^*  \dv_x \alpha_k =-\mathcal{Q}_k(u_1^*,\W_L,\W_R)\delta_0(x-u_1^*t),
\end{equation}
where $\mathcal{Q}_k(u_1^*,\W_L,\W_R)\geq 0$. When $\M = \M_0(\nu_k,\omega_k)$ one has $\mathcal{Q}_k(u_1^*,\W_L,\W_R)= 0$  and when $0<\M < \M_0(\nu_k,\omega_k)$ one has $\mathcal{Q}_k(u_1^*,\W_L,\W_R) > 0$. 

\item The Riemann problem $(\mcal{S}_k)$-\eqref{multi:stepk:CI} admits solutions with the subsonic wave ordering $u_k-a_k \tau_k <  u_k < u_1^* < u_k+a_k \tau_k$, if and only if
\begin{equation}
\udd_k-u_1^* \leq 0 \qquad \text{and} \qquad \udd_k-a_k \tdd_{k,L} < u_1^*.
\end{equation}
By the Galilean invariance of system $(\mcal{S}_k)$ written in the mowing frame of speed $u_1^*$ (see \cite[System (4.33)]{coq-13-rob}), such a solution is given by 
$
\xi\mapsto \mcal{V}\W_k(2u_1^*-\xi;\mcal{V}\W_{k,R},\mcal{V}\W_{k,L})
$
where the operator $\mcal{V}$ changes the relative velocities $u_k-u_1^*$ into their opposite values 
\[
\mcal{V}: (\alpha_k,\alpha_k\rho_k,\alpha_k\rho_k u_k,\alpha_k\rho_k\T_k) \mapsto (\alpha_k,\alpha_k\rho_k,\alpha_k\rho_k (2u_1^*-u_k),\alpha_k\rho_k\T_k).
\]
The solution also satisfies an energy identity similar to \eqref{enerphik}.

\item The Riemann problem $(\mcal{S}_k)$-\eqref{multi:stepk:CI} admits solutions with the subsonic wave ordering $u_k-a_k \tau_k <  u_k = u_1^* < u_k+a_k \tau_k$, if and only if
\begin{equation}
\udd_k-u_1^* =0.
\end{equation}
The intermediate states are obtained by passing to the limit as $\M_k^*\to 0$ in the expressions given in the case of the wave ordering $u_1^*<u_k$.
The solution also satisfies an energy identity similar to \eqref{enerphik}.
\end{itemize}
\end{prop}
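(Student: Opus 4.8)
Since all four characteristic fields of $(\mcal{S}_k)$ have just been shown to be linearly degenerate, the plan is to build the self-similar solution as a chain of contact discontinuities: connect the constant states through the Riemann invariants of each wave, and close the resulting algebraic system by the Rankine--Hugoniot relations across the externally prescribed $u_1^*$-wave (which carries the non-conservative product). Because $(\mcal{S}_k)$ coincides verbatim with \cite[System (4.20)]{coq-13-rob}, the whole computation is the two phase one of \cite[Prop. 4.8]{coq-13-rob}; I would nonetheless carry it out as follows. First I would record the invariants: across $u_k-a_k\tau_k$ the quantities $\T_k$ and $\pi_k+a_ku_k$ are preserved, across $u_k+a_k\tau_k$ the quantities $\T_k$ and $\pi_k-a_ku_k$, across the material contact $u_k$ both $u_k$ and $\pi_k$ (while $\tau_k,\T_k$ jump), and across the $u_1^*$-wave $\T_k$ is preserved while $\alpha_k$ jumps from $\alpha_{k,L}$ to $\alpha_{k,R}$.

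In the ordering $u_k>u_1^*$, crossing successively the left acoustic wave, the $u_1^*$-wave, the $u_k$-contact and the right acoustic wave, these invariants immediately give $\T_k^-=\T_k^+=\T_{k,L}$ and $\T_{k,R*}=\T_{k,R}$, and express the remaining components of $\W_k^-,\W_k^+,\W_{k,R*}$ in terms of the data and of the still-unknown jump across $u_1^*$. On that wave I would impose mass balance and the momentum balance with source $-\pi_k^*\Delta\alpha_k$ in the distributional sense; this closes the system up to one scalar degree of freedom, naturally parametrized by $\M=(u_k^--u_1^*)/(a_k\tau_k^-)$. Eliminating the auxiliary unknowns then yields the explicit states \eqref{eq:intstate:k}, with $\nu_k=\alpha_{k,L}/\alpha_{k,R}$ and $\Me_k=(\udd_k-u_1^*)/(a_k\tdd_{k,L})$ appearing through the definitions \eqref{diese} and Remark \ref{rem:diese}.

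Next I would delimit the admissible values of $\M$. The ordering $u_k>u_1^*$ forces $\M>0$; the condition $\udd_k-u_1^*\geq 0$ is exactly $\Me_k\geq 0$; and the left subsonic inequality $\udd_k-a_k\tdd_{k,L}=u_{k,L}-a_k\tau_{k,L}<u_1^*$ (Remark \ref{rem:diese}) reads $\Me_k<1$, so $\omega_k\in(0,1]$. I would then check that the specific volumes $\tau_k^-,\tau_k^+,\tau_{k,R*}$ in \eqref{eq:intstate:k} stay positive precisely for $\M\in(0,\M_0(\nu_k,\omega_k)]$, the threshold $\M_0$ being the smaller root of the quadratic concealed in \eqref{thefamousM0}; this is the computation pinning down the closed form, and it degenerates correctly to $\M_0(\nu_k,1)=0$ when $\Me_k=0$.

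Finally, for \eqref{enerphik} I would invoke Proposition \ref{prop:ener:multi:relax}: since $\E_k$ is conserved across every genuine phase-$k$ contact, the only possible energy defect sits on the $u_1^*$-wave, so $\mathcal{Q}_k$ is the Rankine--Hugoniot defect of the flux $\alpha_k\rho_k\E_k u_k+\alpha_k\pi_k u_k$ across $x=u_1^*t$, corrected by the non-conservative work $-u_1^*\pi_k^*\Delta\alpha_k$. Writing $\mathcal{Q}_k$ as a function of $\M$ and exploiting the convexity encoded in the Gibbs principle \eqref{gibbsAE}, I would show $\mathcal{Q}_k\geq 0$, with equality exactly at the dissipation-free endpoint $\M=\M_0(\nu_k,\omega_k)$ and strict positivity for $0<\M<\M_0$; the cases $u_k<u_1^*$ and $u_k=u_1^*$ then follow from the Galilean change of frame $\mcal{V}$ and from the limit $\M\to 0$, in which the $u_k$-contact merges with the $u_1^*$-wave. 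The main obstacle, I expect, is precisely this last step: tying the purely algebraic threshold $\M_0$ to the sign of the energy defect across $u_1^*$ requires the careful monotonicity argument based on \eqref{gibbsAE}, and it is this link that simultaneously explains why the admissible range is $(0,\M_0]$ and why choosing $\M<\M_0$ supplies the extra dissipation needed to keep the densities positive in vanishing-phase regimes.
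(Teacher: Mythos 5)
Your overall strategy coincides with the paper's: the published proof is exactly the sketch you give (connect the constant states through the Riemann invariants of the four linearly degenerate waves, close the algebraic system with the Rankine--Hugoniot relation carrying the source $-\pi_k^*\Delta\alpha_k$ on the $u_1^*$-wave, and defer the explicit computation to \cite[Prop.~4.8]{coq-13-rob}). Your list of invariants is correct, the parametrization $\M=(u_k^--u_1^*)/(a_k\tau_k^-)$ is the right residual degree of freedom, the translation of the subsonic inequalities into $0\leq\Me_k<1$ via Remark \ref{rem:diese} is right, and the treatment of the other two orderings by the operator $\mcal{V}$ and the limit $\M\to0$ matches the paper.

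There is, however, one genuine error in your third paragraph: $\M_0(\nu_k,\omega_k)$ is \emph{not} the positivity threshold for the specific volumes, and the claim that $\tau_k^-,\tau_k^+,\tau_{k,R*}$ stay positive \emph{precisely} for $\M\in(0,\M_0]$ is false. From \eqref{eq:intstate:k}, $\tau_k^+>0$ is automatic and $\tau_k^->0$ only requires $\M<1$, while positivity of $\tau_{k,R*}$ is an independent constraint that can fail \emph{inside} $(0,\M_0]$ when $\nu_k\gg1$: this is precisely why the remark immediately following the proposition (and \cite[Section~4.5.2]{coq-13-rob}) explains that one may have to take $\M<\M_0$ strictly, hence dissipate energy, to keep $\tau_{k,R*}>0$ in vanishing-phase regimes. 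The interval $(0,\M_0]$ is instead carved out by the admissibility requirement $\mathcal{Q}_k\geq0$ in \eqref{enerphik}: $\M_0$ is the smaller root of the quadratic $\nu\M^2-\frac{1+\omega^2}{1-\omega^2}(1+\nu)\M+1=0$ obtained by writing that the energy defect across the $u_1^*$-wave vanishes, and $\mathcal{Q}_k>0$ exactly for $\M$ below that root. Your fourth paragraph states this correctly, so your paragraphs three and four contradict each other; note also that the Gibbs principle \eqref{gibbsAE} plays no role in the sign of $\mathcal{Q}_k$ (it is used later, in the proof of the discrete energy inequalities, to compare $\E_k$ with $E_k$ at equilibrium data) --- the sign of $\mathcal{Q}_k$ follows from a direct algebraic computation on the jump relations. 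With the origin of $\M_0$ corrected, the rest of your outline is a faithful expansion of the paper's argument.
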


\medskip
\begin{proof}
The proof consists in solving a system with the unknowns $\W_k^-$ and $\W_k^+$ and $\W_{k,R*}$ (or $\W_{k,L*}$). All the fields are linearly degenerated. The components of $\W_k^-$ and $\W_k^+$ are linked to each other through the Riemann invariants of the $u_1^*$-wave. $\W_k^-$ and $\W_{k,L}$ are connected through the Riemann invariants of the $\lbrace u_k-a_k\tau_k\rbrace $-wave. $\W_k^+$ and $\W_{k,R*}$ are connected through the Riemann invariants of the $u_k$-wave. $\W_{k,R*}$ and $\W_{k,R}$ are connected through the Riemann invariants of the $\lbrace u_k+a_k\tau_k \rbrace$-wave. We refer to \cite[Prop. 4.8]{coq-13-rob} for more details. 
\end{proof}

\medskip

\begin{remark}
As mentioned in Remark \ref{remark:kinetic1}, taking $\mathcal{Q}_k(u_1^*,\W_L,\W_R)>0$ may be necessary when an initial phase fraction $\alpha_{k,L}$ or $\alpha_{k,R}$ is close to zero, in order to ensure the positivity of all the intermediate states densities. In the case of the wave ordering $u_k-a_k \tau_k < u_1^*< u_k < u_k+a_k \tau_k$ for instance, ensuring the positivity of $\tau_{k,R*}$ in the regime $\nu_k >> 1$ (\emph{i.e.} $\alpha_{k,R}\to 0$) may require taking $0<\M < \M_0(\nu_k,\omega_k)$ which implies $\mathcal{Q}_k(u_1^*,\W_L,\W_R) > 0$. The precise choice of $\M$ in the interval $(0,\M_0(\nu_k,\omega_k))$ made in \cite[Section 4.5.2]{coq-13-rob} to ensure the positivity of $\tau_{k,R*}$ is still valid and will not be detailed here.
\end{remark}

Now that the solution of the Riemann problem $(\mcal{S}_k)$-\eqref{multi:stepk:CI} has been computed, we may
apply Rankine-Hugoniot's jump relation to the momentum equation of $(\mcal{S}_k)$ in order to determine the expression of the non-conservative product $\pi_k^* \Delta \alpha_k$ with respect to the given parameter $u_1^*$, thus defining the function $\mathscr{G}[\W_{L},\W_{R};(a_k)_{k=2,..,N}]$ introduced in \eqref{fonction_g}. We obtain the following expression which is directly taken form \cite[Eq. (4.51)]{coq-13-rob}):
\begin{equation}
\label{multi:stepk:jumpmom}
\pi_k^* \Delta \alpha_k =  \pidd_k \Delta \alpha_k + \theta_k(u_1^*),
\end{equation}
where the function $\theta_k$ is defined by:
\begin{multline}
\label{multi:def:thetak}
\theta_k(u_1^*) =  a_k\paren{\alpha_{k,L}+\alpha_{k,R}} (u_1^* -  \udd_k) \\
+2a_k^2
\left \lbrace
\begin{array}{ll}
\alpha_{k,L} \, \tdd_{k,L} \, \M_0\paren{\frac{\alpha_{k,L}}{\alpha_{k,R}},\dfrac{1-\Me_k}{1+\Me_k}}, & \quad \text{with \ $\Me_k=\frac{\udd_k-u_1^*}{a_k\tdd_{k,L}}$  \ if \ $\udd_k\geq u_1^*$},\\[3ex] 
\alpha_{k,R} \, \tdd_{k,R} \, \M_0\paren{\frac{\alpha_{k,R}}{\alpha_{k,L}},\dfrac{1-\Me_k}{1+\Me_k}}, & \quad \text{with \ $\Me_k=\frac{\udd_k-u_1^*}{a_k\tdd_{k,R}}$ \ if \ $\udd_k\leq u_1^*$}.
\end{array}
\right.
\end{multline}

\medskip
\begin{remark}
\label{rem:kinetic2}
This function $\theta_k$ corresponds to an energy preserving solution (with $\mathcal{Q}_k(u_1^*,\W_L,\W_R)=0$) assuming all the intermediate densities are positive. If one has to dissipate energy in order to ensure the positivity of the densities, function $\theta_k$ must be slightly modified (see \cite[Section 4.5.2]{coq-13-rob} for more details). 
\end{remark}

\subsubsection{Solution to the fixed-point problem}
Solving the fixed-point \eqref{fp0} amounts to \textit{re-coupling} phase 1 with the other phases which have been decoupled for a separate resolution. This is done by equalizing the two expressions obtained for $\sum_{k=2}^N \pi_k^* \Delta \alpha_k$ in the first step \eqref{multi:step1:jumpmom} on the one hand and in the second step \eqref{multi:stepk:jumpmom} (after summation over $k=2,..,N$) on the other hand. We obtain that $u_1^*$ must solve the following scalar fixed-point problem : 
\begin{equation}
\label{fpTheta}
 \Theta(u_1^*)=\sum_{k=2}^N (\pidd_1-\pidd_k) \Delta \alpha_k,
\end{equation}
where the function $\Theta$ is defined by $\Theta(u)= \theta_1(u)+..+\theta_N(u)$.

\medskip
We have the following theorem which states a necessary and sufficient condition for the existence of solutions to the Riemann problem \eqref{sys:multi:relax}-\eqref{sys:multi:relax:CI}. One important fact is that this condition can be \emph{explicitly tested} against the initial data $(\W_L,\W_R)$. 

\begin{theorem}
\label{multi:THEtheorem}
Let be given a pair of admissible initial states $(\W_{L},\W_{R}) \in \Omega_{\W} \times \Omega_{\W}$ and assume that the parameter $a_k$ is such that $\tdd_{k,L} >0$ and $\tdd_{k,R} > 0$ for all $k=1,..,N$. The Riemann problem \eqref{sys:multi:relax}-\eqref{sys:multi:relax:CI} admits  solutions in the sense of Definition \ref{def_sol} if, and only if, the following condition holds:
\begin{equation}
\label{TheCondition}
  \Theta\Big(\max\limits_{k=1,..,N} \left \lbrace u_{k,L}-a_{k}\tau_{k,L}\right \rbrace \Big )< \sum_{k=2}^N (\pidd_1-\pidd_k) \Delta \alpha_k < \Theta\Big(\min\limits_{k=1,..,N} \left \lbrace u_{k,R}+a_{k}\tau_{k,R}\right \rbrace\Big).
\end{equation}
The intermediate states of this solution are given in Propositions \ref{multi:prop:step1} and \ref{multi:prop:stepk}  where $u_1^*$ is the unique real number in the interval $\Big(\max\limits_{k=1,..,N} \left \lbrace u_{k,L}-a_{k}\tau_{k,L}\right \rbrace,\min\limits_{k=1,..,N} \left \lbrace u_{k,R}+a_{k}\tau_{k,R}\right \rbrace \Big)$ satisfying \eqref{fpTheta}. 
\end{theorem}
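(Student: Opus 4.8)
The plan is to reduce the construction of a solution in the sense of Definition \ref{def_sol} to the resolution of the single scalar equation \eqref{fpTheta} for the interface velocity $u_1^*$, and then to solve that equation by a monotonicity-plus-continuity argument. First I would record the equivalence prepared in the preceding subsections: by Propositions \ref{multi:prop:step1} and \ref{multi:prop:stepk}, a self-similar function satisfies (i)--(iv) of Definition \ref{def_sol} if and only if its interface velocity lies in the open interval
\[
I = \Big(\max_{k=1,..,N}\{u_{k,L}-a_k\tau_{k,L}\},\ \min_{k=1,..,N}\{u_{k,R}+a_k\tau_{k,R}\}\Big)
\]
and solves $\Theta(u_1^*)=\sum_{k=2}^N(\pidd_1-\pidd_k)\Delta\alpha_k$, all remaining data ($\Pi^*$ and the intermediate states) being then reconstructed explicitly from \eqref{multi:step1:jumpmom} and \eqref{multi:stepk:jumpmom}. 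Indeed, membership in $I$ is exactly the conjunction of the subsonic ordering \eqref{subsol} and the phase-$1$ positivity \eqref{pos1} (using the identities of Remark \ref{rem:diese}), while the fixed-point equation results from equating the two expressions \eqref{multi:step1:jumpmom} and \eqref{multi:stepk:jumpmom} for $\sum_{k=2}^N\pi_k^*\Delta\alpha_k$. Thus it suffices to show that $\Theta$ possesses a root in $I$ precisely when \eqref{TheCondition} holds.

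The core of the argument is then the qualitative study of $\Theta=\theta_1+\dots+\theta_N$ on $I$. The term $\theta_1$ in \eqref{multi:def:theta1} is affine with positive slope $a_1(\alpha_{1,L}+\alpha_{1,R})$, hence continuous and strictly increasing on all of $\R$. For $k=2,..,N$ I would verify that each $\theta_k$ in \eqref{multi:def:thetak} is well defined, continuous and strictly increasing on $I$. Finiteness holds because, by Remark \ref{rem:diese}, every $u_1^*\in I$ satisfies $\udd_k-a_k\tdd_{k,L}<u_1^*<\udd_k+a_k\tdd_{k,R}$, which keeps $\Me_k$ in $(-1,1)$ and hence $\omega_k>0$ in both branches of \eqref{multi:def:thetak}, so that $\M_0(\nu_k,\omega_k)$ is finite. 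Continuity across the junction $u_1^*=\udd_k$ is immediate since there $\Me_k=0$, $\omega_k=1$ and $\M_0(\cdot,1)=0$, so both the nonlinear and the affine parts vanish, giving $\theta_k(\udd_k)=0$. Strict monotonicity of $\theta_k$ is exactly the property established for the two-phase model: system $(\mcal{S}_k)$ and the resulting jump relation \eqref{multi:stepk:jumpmom} coincide with \cite[System (4.20), Eq. (4.51)]{coq-13-rob}, so I would invoke the corresponding statement there, or reproduce the one-dimensional sign computation $\mathrm{d}\theta_k/\mathrm{d}u_1^*>0$, which reduces to the monotonicity of $\omega\mapsto\M_0(\nu,\omega)$. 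Summing, $\Theta$ is continuous and strictly increasing on $\overline{I}$.

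With these properties the conclusion follows from the intermediate value theorem. Being continuous and strictly increasing, $\Theta$ maps $\overline{I}$ bijectively onto $[\Theta(\inf I),\Theta(\sup I)]$, hence the open interval $I$ onto $(\Theta(\inf I),\Theta(\sup I))$. Therefore the equation $\Theta(u_1^*)=\sum_{k=2}^N(\pidd_1-\pidd_k)\Delta\alpha_k$ has a root in $I$, and then a unique one, if and only if its right-hand side lies strictly between $\Theta(\inf I)$ and $\Theta(\sup I)$, which is precisely condition \eqref{TheCondition}. For the sufficiency direction, feeding this $u_1^*$ back into Propositions \ref{multi:prop:step1} and \ref{multi:prop:stepk} produces the intermediate states together with the energy identities \eqref{multi:relax:ener1:bis}--\eqref{multi:relax:enerk:bis}; for necessity, any admissible solution has $u_1^*\in I$ and solves \eqref{fpTheta}, whence \eqref{TheCondition} by strict monotonicity. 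This closes both implications.

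I expect the main obstacle to be the strict monotonicity of the nonlinear pieces $\theta_k$, $k\ge 2$: unlike the affine $\theta_1$, each involves the two-variable function $\M_0$ through the composite dependence $u_1^*\mapsto(\Me_k,\omega_k)\mapsto\M_0(\nu_k,\omega_k)$, together with a change of branch at $u_1^*=\udd_k$. Establishing that $\mathrm{d}\theta_k/\mathrm{d}u_1^*>0$ throughout $I$ and that the one-sided derivatives match at the junction is the single genuinely computational point; everything else is either inherited verbatim from \cite{coq-13-rob}, since the per-phase subsystems are identical to the two-phase ones, or a direct application of the intermediate value theorem to the scalar map $\Theta$.
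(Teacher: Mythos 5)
Your proposal is correct and follows essentially the same route as the paper: reduce existence to the scalar fixed-point equation \eqref{fpTheta} on the interval $\Big(\max_k\{u_{k,L}-a_k\tau_{k,L}\},\min_k\{u_{k,R}+a_k\tau_{k,R}\}\Big)$, show that $\theta_1$ is affine increasing and that each $\theta_k$, $k\geq 2$, is continuous and strictly increasing there (deferring the sign computation for $\theta_k'$ to the two-phase reference \cite[Eq. (4.65)]{coq-13-rob}), and conclude by the intermediate value theorem. Your added remarks on continuity at the branch junction $u_1^*=\udd_k$ and on the explicit equivalence with Definition \ref{def_sol} are elaborations of, not departures from, the paper's argument.
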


\begin{remark}
\label{rem:akgrand}
When simulating real industrial applications, the relaxation Riemann solver used for the convective effects will be associated with another step for the treatment of zero-th order terms enforcing the return to pressure, velocity (and possibly temperature) equilibrium between the phases. Hence, the pressure disequilibrium between the phases in the initial states is usually expected to be small, which yields small values of the quantities $\pidd_1-\pidd_k$. Hence, in most applications, condition \eqref{TheCondition} is expected to be satisfied. However, even away from pressure equilibrium, it is easy to observe that assumption \eqref{TheCondition} is always satisfied if the parameters $(a_k)_{k=1,..,N}$ are taken large enough. Indeed, denoting $\bfa=(a_1,..,a_N)$, one can prove that:
\[
 \begin{aligned}
  &\Theta\Big(\max\limits_{k=1,..,N} \left \lbrace u_{k,L}-a_{k}\tau_{k,L}\right \rbrace \Big ) \underset{|\bfa|\to+\infty}{\leq} -C_L|\bfa|^2, \\
  &\sum_{k=2}^N (\pidd_1-\pidd_k) \Delta \alpha_k \underset{|\bfa|\to+\infty}{=} \mathcal{O}(|\bfa|),\\
  &\Theta\Big(\min\limits_{k=1,..,N} \left \lbrace u_{k,R}+a_{k}\tau_{k,R}\right \rbrace\Big)\underset{|\bfa|\to+\infty}{\geq} C_R|\bfa|^2,
 \end{aligned}
\]

where $C_L$ and $C_R$ are two positive constants depending on $(\W_L,\W_R)$.
\end{remark}

\begin{proof}[Proof of Theorem \ref{multi:THEtheorem}]
In order to ease the notations, let us denote 
\[
  c_L= \max\limits_{k=1,..,N} \left \lbrace u_{k,L}-a_{k}\tau_{k,L}\right \rbrace, \quad \text{and} \quad c_R=\min\limits_{k=1,..,N} \left \lbrace u_{k,R}+a_{k}\tau_{k,R}\right \rbrace.
\]
Let us prove that each of the functions $\theta_k$ is a continuous and strictly increasing function on the open interval $(c_L,c_R)$.
The function $\theta_1$ defined in \eqref{multi:def:theta1} is clearly continuous and strictly increasing on this interval. Let us now consider $\theta_k$ for some $k\in\lbrace 2,..,N \rbrace$. We only consider the energy preserving case for which $\theta_k$ is defined in \eqref{multi:def:thetak} (see Remark \ref{rem:kinetic2} and \cite[Section 4.6.2]{coq-13-rob} for the general case). For $u_1^*\in(c_L,c_R)$, we have $\Me_k\in(-1,1)$ (see \eqref{multi:def:thetak} and Remark \eqref{rem:diese}) and therefore $w_k=(1-\Me_k)/(1+\Me_k)\in(0,+\infty)$. The function $\omega\mapsto\M_0(\nu,\omega)$ defined in \eqref{thefamousM0} is continuous on $\R_+^*$, which implies that the function $\theta_k$ defined in \eqref{multi:def:thetak} is continuous on the interval $(c_L,c_R)$. Let us now differentiate $\theta_k$.

For $u_1^*\in(c_L,\udd_k)$, we have, denoting $\nu_k = \dfrac{\alpha_{k,L}}{\alpha_{k,R}}$, $\Me_k=\dfrac{\udd_k-u_1^*}{a_k \tdd_{k,L}}$, $\omega_k=\dfrac{1-\Me_k}{1+\Me_k}$:
\[
 \theta_k'(u_1^*) 
= a_k\paren{\alpha_{k,L}+\alpha_{k,R}} + 2 a_k^2 \, \alpha_{k,L} \, \tdd_{k,L}  \frac{\dv\M_0}{\dv \omega} (\nu_k,\omega_k) \cdot \dfrac{d\omega_k}{d\Me_k} \cdot \dfrac{d\Me_k}{d u_1^*} \\
\]
with $\dfrac{d\omega_k}{d\Me_k}=-\dfrac{2}{(1+\Me_k)^2}=-\dfrac{(1+\omega_k)^2}{2}$ and $\dfrac{d\Me_k}{d u_1^*}=-\dfrac{1}{a_k \tdd_{k,L}}$. Hence, we obtain:
\[
  \frac{1}{a_k \, \alpha_{k,R}}\theta_k'(u_1^*) = 1+\nu_k+\nu_k (1+\omega_k)^2 \frac{\dv\M_0}{\dv \omega} (\nu_k,\omega_k).
\]
It is not difficult to prove that the right hand side of this equality is positive which implies that $\theta_k$ is strictly increasing on the interval $(c_L,\udd_k)$. Actually this exact computation has already been done in the framework of the Baer-Nunziato model (see \cite[Eq. (4.65)]{coq-13-rob} for the details). A similar computation proves that $\theta_k'$ is also positive on the interval $(\udd_k,c_R)$. We obtain that all the functions $\theta_k$ are continuous and strictly increasing on the open interval $(c_L,c_R)$ and so is $\Theta= \theta_1+..+\theta_N$. The result of Theorem \ref{multi:THEtheorem} follows from the intermediate value theorem.

\end{proof}

\subsection{The relaxation finite volume scheme and its properties}

We now derive a finite volume scheme for the approximation of the entropy weak solutions of a Cauchy problem associated with system \eqref{sys:multi0}.
For simplicity in the notations, we assume constant positive time and space steps $\Delta
t$ and $\Delta x$. The space is partitioned into cells $\R=\bigcup_{j \in \Z}  C_j$ where $C_j= [x_{j-\frac{1}{2}},x_{j+\frac{1}{2}}[$ with
$x_{j+\frac{1}{2}}=(j+\frac 12) \Delta x$ for all $j$ in $\Z$. 
We also introduce the discrete intermediate times $t^{n}=n\Delta t, \ n
\in \N$. The approximate solution at time $t^{n}$ is a piecewise constant function whose
value on each cell $C_{j}$ is a
constant value denoted by $\U_{j}^{n}$. We assume that $\Delta t$ and $\Delta x$ satisfy the CFL condition :
\begin{equation}
\label{cfl}
\frac{\Delta t}{\Delta x}
\ \max\limits_{k=1,..,N}
\ \max\limits_{j\in\Z}
\ \max 
\left \lbrace |
(u_{k}-a_{k}\tau_{k})^n_j|,|
(u_{k}+a_{k}\tau_{k})^n_{j+1}|\right \rbrace < \frac{1}{2}.
\end{equation}

The Finite Volume relaxation scheme reads (see \cite{coq-13-rob} for more details):
\begin{equation}
\label{fvscheme}
\U_{j}^{n+1} = \U_{j}^{n} - \dfrac{\Delta t}{\Delta x}
\left (\mathbf{F}^{-}(\U_{j}^{n},\U_{j+1}^{n}) -
\mathbf{F}^{+}(\U_{j-1}^{n},\U_{j}^{n})  \right),
\end{equation}
where the numerical fluxes are computed thanks to the exact Riemann solver $\W_{\rm Riem}(\xi;\W_{L},\W_{R})$ constructed for the relaxation system:
\begin{equation*}
\begin{aligned}
&\mathbf{F}^{-}(\vect{U}_{L},\vect{U}_{R}) = \mathscr{P} 
\textbf{g} \left (\W_{\rm Riem} \left (0^-;\mathscr{M}(\vect{U}_L),\mathscr{M}(\vect{U}_R)\right) \right ) +\mathscr{P} 
\mathbf{D}^* \left(\mathscr{M}(\vect{U}_L),\mathscr{M}(\vect{U}_R)\right) \Ind_{\left \lbrace u_{1}^* < 0 \right \rbrace}, \\
&\mathbf{F}^{+}(\vect{U}_{L},\vect{U}_{R}) = \mathscr{P} 
\textbf{g} \left (\W_{\rm Riem} \left (0^+;\mathscr{M}(\vect{U}_L),\mathscr{M}(\vect{U}_R)\right) \right ) -\mathscr{P} 
{\bf D}^* \left(\mathscr{M}(\vect{U}_L),\mathscr{M}(\vect{U}_R)\right) \Ind_{\left \lbrace u_{1}^* > 0 \right \rbrace}.
\end{aligned}
\end{equation*}
The non-conservative part of the flux ${\bf D}^*(\W_L,\W_R)$ is defined in Definition \ref{def_sol} and the mappings $\mathscr{M}$ and $\mathscr{P}$ are given by:
\begin{align*}
& \mathscr{M}: \left \lbrace \begin{array}{cccl}
		  &  \Omega_\U & \longrightarrow & \Omega_\W  \\
		  &  (x_i)_{i=1,..,3N-1} & \longmapsto & \paren{x_1,x_2,..,x_{3N-1},x_1,x_2,..,x_{N-1},1-\sum_{i=1}^{N-1}x_i}. 
\end{array} \right. \\[2ex]
& \mathscr{P}: \left \lbrace \begin{array}{cccl}
		  &  \Omega_\W & \longrightarrow & \Omega_\U  \\
		  &  (x_i)_{i=1,..,4N-1} & \longmapsto & (x_1,x_2,..,x_{3N-1}). 
\end{array} \right.
\end{align*}

\medskip

At each interface $x_{j+\frac 12}$, the relaxation Riemann solver $\W_{\rm Riem}(\xi;\mathscr{M}(\vect{U}_{j}^n),\mathscr{M}(\vect{U}_{j+1}^n))$ depends on the family of relaxation parameters $(a_k)_{k=1,..,N}$ which must be chosen so as to ensure the conditions stated in the existence Theorem \ref{multi:THEtheorem}, and to satisfy some stability properties. Observe that one might take different relaxation parameters $(a_k)_{k=1,..,N}$ for each interface, which amounts to approximating system \eqref{sys:multi0} by a different relaxation approximation at each interface, which is more or less diffusive depending on how large are the local parameters. Further discussion on the practical computation of these parameters is postponed to the appendix \ref{sec:flux}, as well as the detailed description of the computation of the numerical fluxes $\mathbf{F}^{\pm}(\vect{U}_{L},\vect{U}_{R})$.

\medskip
\begin{remark}[The method is valid for all barotropic e.o.s.]
\label{rem:any eos}
The Riemann solution $\W_{\rm Riem}(\xi;\W_L,\W_R)$ only depends on the quantities $\udd_k$, $\pidd_k$, $\tdd_{k,L}$ and $\tdd_{k,R}$ defined in \eqref{diese} and on the left and right phase fractions $\alpha_{k,L}$ and $\alpha_{k,R}$ for $k=1,..,N$. Indeed, the solution of the fixed-point problem \eqref{fpTheta} only depends on these quantities and so do the intermediate states (see \eqref{eq:intstate:1} and \eqref{eq:intstate:k}). Therefore, the dependence of the Riemann solution $\W_{\rm Riem}(\xi;\W_L,\W_R)$ on the barotropic equation of state occurs only through the computation of $\pi_k(\tau_{k,L},\T_{k,L})$ and $\pi_k(\tau_{k,R},\T_{k,R})$. For $(\W_L,\W_R)=(\mathscr{M}(\vect{U}_j^n),\mathscr{M}(\vect{U}_{j+1}^n))$, we have $\T_{k,L}=(\tau_{k})_j^n,$ and $\T_{k,R}=(\tau_{k})_{j+1}^n$ and thus $\pi_k(\tau_{k,L},\T_{k,L})=p_k((\rho_{k})_j^n)$ and $\pi_k(\tau_{k,R},\T_{k,R})=p_k((\rho_{k})_{j+1}^n)$ for all $k=1,..,N$. These quantities can be computed for any barotropic e.o.s. at the beginning of each time step.
\end{remark}

\medskip
We may now state the following theorem, which gathers the main properties of this scheme, and which constitutes the main result of the paper.

\begin{theorem}
The finite volume scheme \eqref{fvscheme} for the multiphase flow model has the following properties:
 
 \begin{itemize}
  \item \textbf{Positivity:} Under the CFL condition \eqref{cfl}, the scheme preserves positive values of the phase fractions and densities: for all $n\in\N$, if ($\U_j^n\in\Omega_{\U}$ for all $j\in\Z$), then $0 <(\alpha_{k})_j^{n+1} < 1$ and $(\alpha_k \rho_k)_j^{n+1} >0$ for all $k=1,..,N$ and all $j\in\Z$, \emph{i.e.} ($\U_j^{n+1}\in \Omega_{\U}$ for all $j\in\Z$).

  \item \textbf{Conservativity:} The discretizations of the partial masses $\alpha_k\rho_k,\,k=1,..,N$, and the total mixture momentum $\sum_{k=1}^N \alpha_k\rho_k u_k$ are conservative.
  
  \item \textbf{Discrete energy inequalities.} Assume that the relaxation parameters $(a_k)_{j+\frac 12}^n,\, k=1,..,N$ satisfy Whitham's condition at each time step and each interface, \emph{i.e} that for all $k=1,..,N$, $n\in\N$, $j\in\Z$, $(a_k)_{j+\frac 12}^n$ is large enough so that
\begin{equation}
\label{whithambis}
((a_k)_{j+\frac 12}^n)^2 > -\frac{d\PP_k}{d\tau_k}(\T_k),
\end{equation}
for all $\T_k$ in the solution $\xi\mapsto \W_{\rm Riem}(\xi;\mathscr{M}(\vect{U}_{j}^n),\mathscr{M}(\vect{U}_{j+1}^n))$. Then, the values $\vect{U}_{j}^n,\, j\in\Z,\,n\in\N$, computed by the scheme satisfy the following discrete energy inequalities, which are discrete counterparts of the energy inequalities \eqref{multi:ener1:ineq} and \eqref{multi:enerk:ineq} satisfied by the exact entropy weak solutions of the model:
\begin{align}
& \label{ener_prop_1}
\begin{array}{ll}
  (\alpha_1\rho_1 E_1)(\vect{U}_j^{n+1}) \leq (\alpha_1\rho_1 E_1)(\vect{U}_j^{n}) & \displaystyle - \frac{\Delta t}{\Delta x} \left (  (\alpha_1\rho_1 \E_1 u_1+\alpha_1 \pi_1 u_1)_{j+\frac 12}^{n}- (\alpha_1\rho_1 \E_1 u_1+\alpha_1 \pi_1 u_1)_{j-\frac 12}^{n}\right) \\[2ex] 
   & \displaystyle -\frac{\Delta t}{\Delta x}  \Ind_{\left \lbrace (u_1^*)_{j-\frac 12}^n \geq 0 \right \rbrace }(u_1^*)_{j- \frac 12}^n \, \sum_{l=2}^N(\pi_l^*)_{j- \frac 12}^n\left ( (\alpha_l)_{j}^n- (\alpha_l)_{j-1}^n \right ) \\[2ex]
   & \displaystyle-\frac{\Delta t}{\Delta x}  \Ind_{\left \lbrace (u_1^*)_{j+\frac 12}^n \leq 0 \right \rbrace }(u_1^*)_{j+ \frac 12}^n \, \sum_{l=2}^N(\pi_l^*)_{j+ \frac 12}^n \left ( (\alpha_l)_{j+1}^n- (\alpha_l)_{j}^n \right ),
   \end{array}  
\\
& \nonumber \text{and for $k=2,..,N$}:
\\
& \label{ener_prop_k}
\begin{array}{ll}
  (\alpha_k\rho_k E_k)(\vect{U}_j^{n+1}) \leq (\alpha_k\rho_k E_k)(\vect{U}_j^{n}) & \displaystyle - \frac{\Delta t}{\Delta x} \left (  (\alpha_k\rho_k \E_k u_k+\alpha_k \pi_k u_k)_{j+\frac 12}^{n}- (\alpha_k\rho_k \E_k u_k+\alpha_k \pi_k u_k)_{j-\frac 12}^{n}\right) \\[2ex] 
   & \displaystyle +\frac{\Delta t}{\Delta x}  \Ind_{\left \lbrace (u_1^*)_{j-\frac 12}^n \geq 0 \right \rbrace }(u_1^*)_{j- \frac 12}^n \, (\pi_k^*)_{j- \frac 12}^n\left ( (\alpha_k)_{j}^n- (\alpha_k)_{j-1}^n \right ) \\[2ex]
   & \displaystyle+\frac{\Delta t}{\Delta x}  \Ind_{\left \lbrace (u_1^*)_{j+\frac 12}^n \leq 0 \right \rbrace }(u_1^*)_{j+ \frac 12}^n \, (\pi_k^*)_{j+ \frac 12}^n \left ( (\alpha_k)_{j+1}^n- (\alpha_k)_{j}^n \right ),
   \end{array}
\end{align}
where  for $j\in\Z$, $(\alpha_k\rho_k \E_k u_k+\alpha_k \pi_k u_k)_{j+\frac 12}^{n}=(\alpha_k\rho_k \E_k u_k+\alpha_k \pi_k u_k) \left (\W_{\rm Riem}(0^+;\mathscr{M}(\vect{U}_{j}^n),\mathscr{M}(\vect{U}_{j+1}^n)) \right )$ is the right hand side trace of the phasic energy flux evaluated at $x_{j+\frac 12}$.
 \end{itemize}
\end{theorem}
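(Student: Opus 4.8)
The plan is to exploit the Godunov-type structure of \eqref{fvscheme}. Under the CFL condition \eqref{cfl}, the update can be reinterpreted as a cell-average of juxtaposed, non-interacting relaxation Riemann solutions. Concretely, I would lift the data to the enlarged variables by $\W_j^n=\mathscr{M}(\U_j^n)$, so that every phase starts at equilibrium $\T_k=\tau_k$, solve the Riemann problem \eqref{sys:multi:relax}-\eqref{sys:multi:relax:CI} at each interface $x_{j+\frac12}$ with data $(\W_j^n,\W_{j+1}^n)$ in the sense of Theorem \ref{multi:THEtheorem}, and note that \eqref{cfl} prevents the waves issued from two adjacent interfaces from meeting before $t^{n+1}$. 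Denoting by $\W_j^{n+1,-}$ the average over $C_j$ of this non-interacting juxtaposition, the flux-form scheme is exactly $\U_j^{n+1}=\mathscr{P}(\W_j^{n+1,-})$, i.e. average in relaxation variables, then project back to equilibrium. All three properties flow from this representation, and the argument parallels the two-phase case of \cite{coq-13-rob}.

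For positivity, every intermediate state of each Riemann fan lies in $\Omega_\W$ by Propositions \ref{multi:prop:step1} and \ref{multi:prop:stepk}: positivity of the densities is precisely what condition \eqref{pos1} and the admissible choice of $\M$ guarantee. Since $\W_j^{n+1,-}$ is a convex combination of such states, and since the constraint $0<\alpha_k<1$ is stable under convex combination while $\alpha_k\rho_k>0$ is stable as an average of positive quantities, one gets $\W_j^{n+1,-}\in\Omega_\W$, hence $\U_j^{n+1}=\mathscr{P}(\W_j^{n+1,-})\in\Omega_\U$.

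For conservativity, I would read off the structure of ${\bf D}^*$ in Definition \ref{def_sol}: its entries vanish on every partial-mass component, and on the momentum components they are $\sum_{l\geq2}\pi_l^*\Delta\alpha_l$ for phase $1$ and $-\pi_k^*\Delta\alpha_k$ for phase $k\geq2$, which sum to zero. Consequently $\mathbf{F}^-$ and $\mathbf{F}^+$ coincide on the partial masses $\alpha_k\rho_k$ and on the total momentum $\sum_k\alpha_k\rho_k u_k$, once one adds that the associated conservative fluxes are continuous at $\xi=0$ (Rankine--Hugoniot applied to the genuinely conservative mass equations and to the summed momentum equation, whose non-conservative Dirac contributions cancel across the $u_1^*$-wave). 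A single-valued interface flux for these quantities is exactly the claimed conservativity.

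The energy inequalities are the substantive part and I would establish them by the classical three-step relaxation argument. (i) At $t^n$ the states are at equilibrium, so Gibbs' principle \eqref{gibbsAE} gives $(\alpha_k\rho_k\E_k)(\W_j^n)=(\alpha_k\rho_k E_k)(\U_j^n)$. (ii) Integrating the phasic relaxation energy identities \eqref{multi:relax:ener1:bis}-\eqref{multi:relax:enerk:bis} over $C_j\times[t^n,t^{n+1}]$ along the non-interacting solution yields a balance whose flux terms are the traces $(\alpha_k\rho_k\E_k u_k+\alpha_k\pi_k u_k)_{j\pm\frac12}^n$; the non-conservative source $u_1^*\pi_k^*\,\partial_x\alpha_k$, being a Dirac mass carried by the $u_1^*$-wave, contributes to cell $j$ only when that wave travels into $C_j$, producing exactly the upwinded terms weighted by $\Ind_{\lbrace (u_1^*)_{j-\frac12}^n\geq0\rbrace}$ and $\Ind_{\lbrace (u_1^*)_{j+\frac12}^n\leq0\rbrace}$ of \eqref{ener_prop_1}-\eqref{ener_prop_k}, while the nonnegative dissipation $-\mathcal{Q}_k\delta_0\leq0$ turns the phase-$k$ balance into an inequality. (iii) Under Whitham's condition \eqref{whithambis} the map $\W\mapsto(\alpha_k\rho_k\E_k)(\W)$ is convex in the conservative relaxation variables, so Jensen's inequality transfers the bound from the cell-average of the energy to the energy of the cell-average $\W_j^{n+1,-}$; a final use of Gibbs after projection, $(\alpha_k\rho_k E_k)(\U_j^{n+1})\leq(\alpha_k\rho_k\E_k)(\W_j^{n+1,-})$, closes the estimate. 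The main obstacle is step (iii): one must check that Whitham's condition is precisely what renders $(\alpha_k\rho_k\E_k)$ convex, so that the averaging dissipates it in the correct direction, and one must carry the non-conservative Dirac source through the space--time integration with the right upwinding so that it lands on the prescribed half-cell with the prescribed sign.
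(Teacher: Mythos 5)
Your proposal follows the paper's proof almost verbatim in architecture: the same Godunov reinterpretation of the update as a cell average of non-interacting relaxation Riemann fans (cf.\ \eqref{app}), positivity from the admissibility of the intermediate states in Propositions \ref{multi:prop:step1} and \ref{multi:prop:stepk} plus convexity of the constraints, conservativity from the cancellation of the non-conservative momentum contributions (equivalently, from $u_1^*$ solving the fixed point \eqref{fpTheta}), and the energy inequality from integrating \eqref{local_ener_relax_k} with the upwinded Dirac source and $\mathcal{Q}_k\geq 0$.

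The one substantive deviation is the order of operations in your step (iii), and it changes which convexity statement you need. You apply Jensen first, to the map $\W\mapsto(\alpha_k\rho_k\E_k)(\W)$ in the \emph{relaxation} conservative variables, and only then project to equilibrium via Gibbs. The paper does the reverse: it first uses Gibbs' principle \eqref{gibbsAE} \emph{pointwise} at $t^{n+1}$ to bound $(\alpha_k\rho_k E_k)(\vect{U}_{app}(x,t^{n+1}))$ by $(\alpha_k\rho_k\E_k)(\vect{W}(x,t^{n+1}))$, integrates, and then applies Jensen to the \emph{equilibrium} energy $\vect{U}\mapsto(\alpha_k\rho_k E_k)(\vect{U})$, whose convexity is established in \cite{Note-multi}. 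Your ordering instead requires convexity of the relaxation energy in the relaxation variables, which is nowhere proven in the paper; moreover you attribute that convexity to Whitham's condition, whereas in the paper Whitham's condition \eqref{whithambis} is used only to guarantee the Gibbs minimization principle (that the instantaneous projection $\T_k\to\tau_k$ dissipates $\E_k$), the convexity of the equilibrium energy being an independent fact. Your route can presumably be made to work, but as written it rests on an unverified lemma that the paper's ordering avoids; swapping the two inequalities back recovers the paper's argument exactly. A second, minor omission on the conservativity point: beyond the cancellation of the entries of ${\bf D}^*$, one should note (as you do in passing) that the conservative fluxes of the partial masses and of the total momentum are single-valued at $\xi=0$, which is what makes $\mathbf{F}^-$ and $\mathbf{F}^+$ agree on those components.
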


\begin{proof}
A classical reformulation of the approximate Riemann solver allows to see that $\vect{U}_j^{n+1}$ is the cell-average over $C_j$ of the following function at $t=t^{n+1}$:
\begin{equation}
\label{app}
 \vect{U}_{app}(x,t):=\sum_{j\in\Z} \mathscr{P} \vect{W}_{\rm Riem}\left (\frac{x-x_{j+\frac 12}}{t-t^n};\mathscr{M}(\vect{U}_{j}^n),\mathscr{M}(\vect{U}_{j+1}^n) \right ) \Ind_{[x_j,x_{j+1}]}(x).
\end{equation}
Hence, the positivity property on the phase fractions and phase densities is a direct consequence of Theorem \ref{multi:THEtheorem} and Definition \ref{def_sol} which states the positivity of the densities in the relaxation Riemann solution. For this purpose, energy dissipation \eqref{multi:relax:enerk:bis} across the $u_1$-contact discontinuity may be necessary for enforcing this property when the ratio $\frac{(\alpha_k)^n_{j}}{(\alpha_k)^n_{j+1}}$ (or its inverse) is large for some $j\in\Z$. 

The conservativity of the discretization of the mass equation is straightforward. That of the discretization of the total momentum is a consequence of the fact that $u_1^*$ is the solution of the fixed-point problem \eqref{fpTheta}.

Let us now prove the discrete energy inequalities \eqref{ener_prop_k} for phases $k=2,..,N$ satisfied by the scheme under Whitham's condition \eqref{whithambis}. Assuming the CFL condition \eqref{cfl}, the solution of \eqref{sys:multi:relax} over $[x_{j-\frac 12},x_{j+\frac 12}]\times[t^n,t^{n+1}]$ is the function
\begin{multline}
\vect{W}(x,t):=\vect{W}_r\left (\frac{x-x_{j-\frac 12}}{t-t^n};\mathscr{M}(\vect{U}_{j-1}^n),\mathscr{M}(\vect{U}_{j}^n) \right ) \Ind_{[x_{j-\frac 12},x_{j}]}(x) \\ +\vect{W}_r\left (\frac{x-x_{j+\frac 12}}{t-t^n};\mathscr{M}(\vect{U}_{j}^n),\mathscr{M}(\vect{U}_{j+1}^n) \right ) \Ind_{[x_{j},x_{j+\frac 12}]}(x).
\end{multline}
According to Definition \ref{def_sol}, this function satisfies the phase k energy equation:
\begin{equation}
 \label{local_ener_relax_k}
\begin{aligned}
\dv_t (\alpha_k \rho_k \E_k) &+ \dv_x (\alpha_k \rho_k \E_k u_k + \alpha_k \pi_k u_k) \\
&- (u_1^*\pi_k^*)_{j-\frac 12}^n \left ( (\alpha_k)_{j}^n- (\alpha_k)_{j-1}^n \right ) \delta_{0}\left (x-x_{j-\frac12}-(u_1^*)_{j-\frac 12}^n(t-t^n) \right ) \\
&- (u_1^*\pi_k^*)_{j+\frac 12}^n \left ( (\alpha_k)_{j+1}^n- (\alpha_k)_{j}^n \right ) \delta_{0}\left (x-x_{j+\frac12}-(u_1^*)_{j+\frac 12}^n(t-t^n) \right )   \\
&=- (\mathcal{Q}_k)_{j-\frac 12}^n \delta_{0}\left (x-x_{j-\frac12}-(u_1^*)_{j-\frac 12}^n(t-t^n) \right ) 
- (\mathcal{Q}_k)_{j+\frac 12}^n\delta_{0} \left (x-x_{j+\frac12}-(u_1^*)_{j+\frac 12}^n(t-t^n) \right ),
 \end{aligned}
\end{equation}
where for $i\in\Z$, we have denoted $(\mathcal{Q}_k)_{i-\frac 12}^n=\mathcal{Q}_k\left ((u_1^*)_{i-\frac 12}^n,\mathscr{M}(\vect{U}_{i-1}^n),\mathscr{M}(\vect{U}_{i}^n) \right )$.
Integrating this equation over $]x_{j-\frac 12},x_{j+\frac 12}[\times[t^n,t^{n+1}]$ and dividing by $\Delta x$ yields:
\begin{equation}
\label{enerk_proof}
 \begin{array}{ll}
  \displaystyle \frac{1}{\Delta x}\int_{x_{j-\frac 12}}^{x_{j+\frac 12}}(\alpha_k\rho_k \E_k)(\vect{W}(x,t^{n+1}))\dx 
  &\leq (\alpha_k\rho_k \E_k)(\mathscr{M}(\vect{U}_j^{n})) \\
  & \displaystyle - \frac{\Delta t}{\Delta x}  (\alpha_k\rho_k \E_k u_k+\alpha_k \pi_k u_k) \left ( \vect{W}_{\rm Riem}\left (0^-;\mathscr{M}(\vect{U}_{j}^n),\mathscr{M}(\vect{U}_{j+1}^n) \right ) \right )\\[2ex]
  &\displaystyle  + \frac{\Delta t}{\Delta x} (\alpha_k\rho_k \E_k u_k+\alpha_k \pi_k u_k) \left (\vect{W}_{\rm Rieam}\left (0^+;\mathscr{M}(\vect{U}_{j-1}^n),\mathscr{M}(\vect{U}_{j}^n) \right ) \right ) \\[2ex]
  & \displaystyle +\frac{\Delta t}{\Delta x}  \Ind_{\left \lbrace (u_1^*)_{j-\frac 12}^n \geq 0 \right \rbrace }(u_1^* \, \pi_k^*)_{j- \frac 12}^n\left ( (\alpha_k)_{j}^n- (\alpha_k)_{j-1}^n \right ) \\[2ex]
  & \displaystyle+\frac{\Delta t}{\Delta x}  \Ind_{\left \lbrace (u_1^*)_{j+\frac 12}^n \leq  0 \right \rbrace }(u_1^* \, \pi_k^*)_{j+ \frac 12}^n \left ( (\alpha_k)_{j+1}^n- (\alpha_k)_{j}^n \right ),
   \end{array}
\end{equation}
because $(\mathcal{Q}_k)_{j- \frac 12}^n\geq0$ and $(\mathcal{Q}_k)_{j+ \frac 12}^n \geq 0$. Since the initial data is at equilibrium: $\vect{W}(x,t^{n})=\mathscr{M}(\vect{U}_j^{n})$ for all $x\in C_j$ (\emph{i.e.}  $(\T_k)_j^n$ is set to be equal to $(\tau_k)_j^n$) one has $(\alpha_k\rho_k \E_k)(\mathscr{M}(\vect{U}_j^{n}))=(\alpha_k\rho_k E_k)(\vect{U}_j^{n})$ according to Proposition \ref{prop:ener:multi:relax}. Applying the Rankine-Hugoniot jump relation to \eqref{local_ener_relax_k} across the line $\lbrace (x,t),x=x_{j+\frac 12}, \, t>0\rbrace$, yields:
\begin{multline*}
(\alpha_k\rho_k \E_k u_k+\alpha_k \pi_k u_k) \left ( \vect{W}_{\rm Riem}\left (0^-;\mathscr{M}(\vect{U}_{j}^n),\mathscr{M}(\vect{U}_{j+1}^n) \right ) \right ) \\
= (\alpha_k\rho_k \E_k u_k+\alpha_k \pi_k u_k) \left ( \vect{W}_{\rm Riem}\left (0^+;\mathscr{M}(\vect{U}_{j}^n),\mathscr{M}(\vect{U}_{j+1}^n) \right ) \right ) + (\mathcal{Q}_k)_{j+ \frac 12}^n \Ind_{\left \lbrace (u_1^*)_{j+\frac 12}^n =  0 \right \rbrace }.
 \end{multline*}
Hence, since $(\mathcal{Q}_k)_{j+ \frac 12}^n\geq0$, for the interface $x_{j+\frac 12}$, taking the trace of $(\alpha_k\rho_k \E_k u_k+\alpha_k \pi_k u_k)$ at $0^+$ instead of $0^-$ in \eqref{enerk_proof} only improves the inequality. Furthermore, assuming that the parameter $a_k$ satisfies Whitham's condition \eqref{whithambis}, the Gibbs principle stated in \eqref{gibbsAE} holds true so that:
$$
\frac{1}{\Delta x}\int_{x_{j-\frac 12}}^{x_{j+\frac 12}}(\alpha_k\rho_k E_k)(\vect{U}_{app}(x,t^{n+1}))\dx \leq \frac{1}{\Delta x}\int_{x_{j-\frac 12}}^{x_{j+\frac 12}}(\alpha_k\rho_k \E_k)(\vect{W}(x,t^{n+1}))\dx.
$$
Invoking the convexity of the mapping $\vect{U}\mapsto(\alpha_k\rho_k E_k)(\vect{U})$ (see \cite{Note-multi}), Jensen's inequality implies that
$$
(\alpha_k\rho_k E_k)(\vect{U}_j^{n+1}) \leq \frac{1}{\Delta x}\int_{x_{j-\frac 12}}^{x_{j+\frac 12}}(\alpha_k\rho_k E_k)(\vect{U}_{app}(x,t^{n+1}))\dx,
$$
which yields the desired discrete energy inequality for phase k. The proof of the discrete energy inequality for phase 1 follows similar steps. 
\end{proof}

\section{Numerical results}
\label{secnumtest}
In this section, we present three test cases on which the performances of the relaxation scheme are illustrated. We only consider the three phase flow model (\emph{i.e.} $N=3$). In the first two test cases,
the thermodynamics of the three phases are given by ideal gas pressure laws for $k=1,2,3$ :
\begin{equation}
\label{gpeos}
 p_k(\rho_k)=\kappa_k\rho_k^{\gamma_k},
\end{equation}
and we consider the approximation of the solutions of two different Riemann problems. In the third test case, we consider the simulation of a shock tube apparatus, where a gas shock wave interacts with a lid of rigid particles. This third case is also simulated with the three phase flow model although it is a two phase flow. The thermodynamics of the particle phase is given by a barotropic stiffened gas \emph{e.o.s.}.  

\medskip
We recall that the scheme relies on a relaxation Riemann solver which requires solving a fixed-point problem in order to compute, for every cell interface $x_{j+\frac 12}$, the zero of a scalar function (see eq. \eqref{fpTheta}). Newton's method is used in order to compute this solution. Usually, convergence is achieved within three iterations.



\subsection{Test-case 1: a Riemann problem with all the waves}
In this test-case, the thermodynamics of all three phases are given by barotropic \emph{e.o.s.} \eqref{gpeos} with the parameters given in Table \ref{Table_eos1}.
\begin{table}[ht!]
\centering
\begin{tabular}{|ccc|}
\hline
$(\kappa_1,\gamma_1)$ 	& $(\kappa_2,\gamma_2)$	& $(\kappa_3,\gamma_3)$	 \\
\hline
$(1,3)$			& $(10,1.4)$		& $(1,1.6)$    \\
\hline
\end{tabular}
\protect \parbox[t]{13cm}{\caption{E.o.s parameters for Test 1.\label{Table_eos1}}}
\end{table}
The wave pattern for phase 1 consists of a left-traveling rarefaction wave, a phase fraction discontinuity of velocity $u_1$ and a right-traveling shock. For phase 2 the wave pattern is composed of a left-traveling shock, the phase fraction discontinuity, and a right-traveling rarefaction wave. Finally, the wave pattern for phase 3 is composed of a left-traveling shock, the phase fraction discontinuity, and a right-traveling shock. The $u_1$-contact discontinuity separates two regions denoted $-$ and $+$ respectively on the left and right sides of the discontinuity (see Figure \ref{Fig_struct}).

\begin{figure}[ht!]
\centering
\begin{center}
\begin{tikzpicture}[scale=4]
\tikzstyle{axes}=[thin,>=latex]
\begin{scope}[axes]
        \draw[->] (-1,0)--(1,0) node[right=3pt] {$x$};
        \draw[->] (0,0)--(0,1) node[left=3pt] {$t$};
        \draw [very thick,color=red] (0,0) -- (20:1cm);
	\draw [very thick,color=red] (0,0) -- (25:1cm);
	\draw [very thick,color=red] (0,0) -- (30:1cm);
        \draw [dashed, very thick,color=red] (0,0) -- (80:0.8cm) node [color=black, above=2pt] {$u_1$};
	\draw [very thick,color=red] (0,0) -- (145:1cm);
	\draw [very thick,color=red] (0,0) -- (150:1cm);
	\draw [very thick,color=red] (0,0) -- (155:1cm);
	\draw (-1.1,0.7) node[] {$u_{1}-c_1$};
	\draw (-1.1,0.6) node[] {$u_{2}-c_2$};
	\draw (-1.1,0.5) node[] {$u_{3}-c_3$};
	\draw (1.1,0.6) node[] {$u_{1}+c_1$};
	\draw (1.1,0.5) node[] {$u_{2}+c_2$};
	\draw (1.1,0.4) node[] {$u_{3}+c_3$};
	\draw (-0.6,0.1) node[] {$\U_{L}$};
	\draw (-0.25,0.6) node[] {$\U^-$};
	\draw (0.4,0.6) node[] {$\U^+$};
	\draw (0.7,0.1) node[] {$\U_{R}$};
\end{scope}
\end{tikzpicture}
\end{center}
\protect \parbox[t]{13cm}{\caption{Structure of a Riemann solution, notations for the intermediate states.\label{Fig_struct}}}
\end{figure}
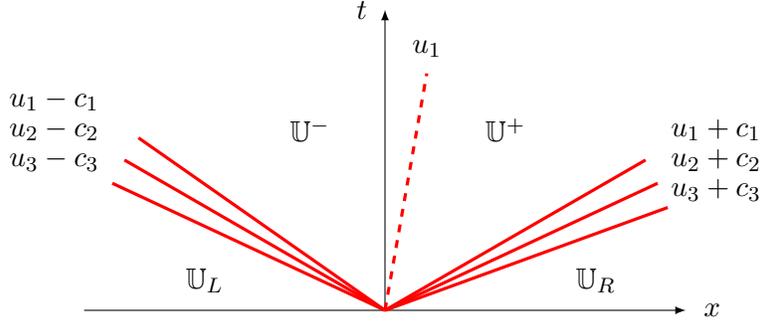

\begin{table}[ht!]
\centering
\begin{tabular}{|c|cccc|}
\hline
		& Region $L$ 	& Region $-$	& Region $+$	& Region $R$ \\
\hline
$\alpha_1$ 	&$0.9$		&$0.9$		&$0.4$		&$0.4$	   \\
$\alpha_2$ 	&$0.05$		&$0.05$		&$0.4$		&$0.4$	   \\
$\rho_1$	&$2.5$		&$2.0$		&$2.06193$	&$1.03097$  \\
$u_1$		&$-0.56603$	&$0.3$		&$0.3$		&$-1.62876$   \\
$\rho_2$	&$0.2$		&$1.0$		&$1.00035$	&$1.25044$	   \\
$u_2$		&$6.18311$	&$0.2$		&$0.28750$	&$1.14140$	   \\
$\rho_3$	&$0.5$		&$1.0$		&$1.19853$	&$.59926$	   \\
$u_3$		&$0.31861$	&$-0.5$		&$0.13313$	&$-0.73119$	   \\
\hline
\end{tabular}
\protect \parbox[t]{13cm}{\caption{Test-case 1: left, right and intermediate states of the exact solution.\label{Table_TC1}}}
\end{table}

The relaxation scheme is compared with Rusanov's scheme, which is the only numerical scheme presently available for the three-phase flow model (see \cite{bou-18-rel}).
In Figure \ref{Figcase1}, the approximate solution computed with the relaxation scheme is compared with both the exact solution and the approximate solution obtained with Rusanov's scheme (a Lax-Friedrichs type scheme). The results show that unlike Rusanov's scheme, the relaxation method correctly captures the intermediate states even for this rather coarse mesh of $100$ cells. This coarse mesh is a typical example of an industrial mesh, reduced to one direction, since $100$ cells in 1D correspond to a $10^6$-cell mesh in 3D. It appears that the contact discontinuity is captured more sharply by the relaxation method than by Rusanov's scheme for which the numerical diffusion is larger. In addition, the velocity of the contact discontinuity is not well estimated for the phase 2 variables with such a coarse mesh. We can also see that for the phase 2 and phase 3 variables, there are no oscillations as one can see for Rusanov's scheme: the curves are monotone between the intermediate states. The intermediate states for the phases 2 and 3 are not captured by Rusanov's scheme whereas the relaxation scheme gives a rather good estimation, even for the narrow state in phase 3 between the contact discontinuity and the right-traveling shock.
These observations assess that, for the same level of refinement, the relaxation method is much more accurate than Rusanov's scheme.

\medskip
A mesh refinement process has also been implemented in order to check numerically the convergence of the method, as well as its performances in terms of CPU-time cost. For this purpose, we compute the discrete $L^1$-error between the approximate solution and the exact one at the final time $T_{\rm max}=N\Delta t=0.05$, normalized by the discrete $L^1$-norm of the exact solution:
\begin{equation*}
 E(\Delta x) = \dfrac{\sum_{j}|\phi_j^N-\phi_{ex}(x_j,T_{\rm max})| \Delta x}{\sum_{j}|\phi_{ex}(x_j,T_{\rm max})| \Delta x},
\end{equation*}
where $\phi$ is any of the \emph{non-conservative} variables $(\alpha_1, \alpha_2, \rho_1, u_1,\rho_2, u_2,\rho_3, u_3)$. The calculations have been implemented on several meshes composed of $100 \times 2^n$ cells with $n=0,1,..,10$ (knowing that the domain size is $L=1$). In Figure \ref{Figcase1bis}, the error $E(\Delta x)$ at the final time $T_{\rm max}=0.05$, is plotted against $\Delta x$ in a $log-log$ scale for both schemes. 
We can see that all the errors converge towards zero with at least the expected order of $\Delta x^{1/2}$. Note that $\Delta x^{1/2}$ is only an asymptotic order of convergence, and in this particular case, one would have to implement the calculation on more refined meshes in order to reach the theoretically expected order of $\Delta x^{1/2}$.

\medskip
Figure \ref{Figcase1ter} displays the error on the non-conservative variables with respect to the CPU-time of the calculation expressed in seconds for both the relaxation scheme and Rusanov's scheme. Each point of the plot corresponds to one single calculation for a given mesh size. One can see that, for all the variables except $\rho_1$ and $u_1$, if one prescribes a given level of the error, the computational time needed to reach this error with Rusanov's scheme is higher than that needed by the relaxation scheme. On some variables, the gain of time can be spectacular. For instance, for the same error on the phase 1 fraction $\alpha_1$, the gain in computational cost is forty times when using the relaxation method rather than Rusanov's scheme which is a quite striking result. Indeed, even if Rusanov's scheme is known for its poor performances in terms of accuracy, it is also an attractive scheme for its reduced complexity. This means that the better accuracy of the relaxation scheme (for a fixed mesh) widely compensates for its (relative) complexity.

\begin{figure}[ht!]
\begin{center}
\begin{tabular}{ccc}
$\alpha_1$ & $\alpha_2$ & \\[1ex]
\includegraphics[width=5.5cm,height=4.3cm]{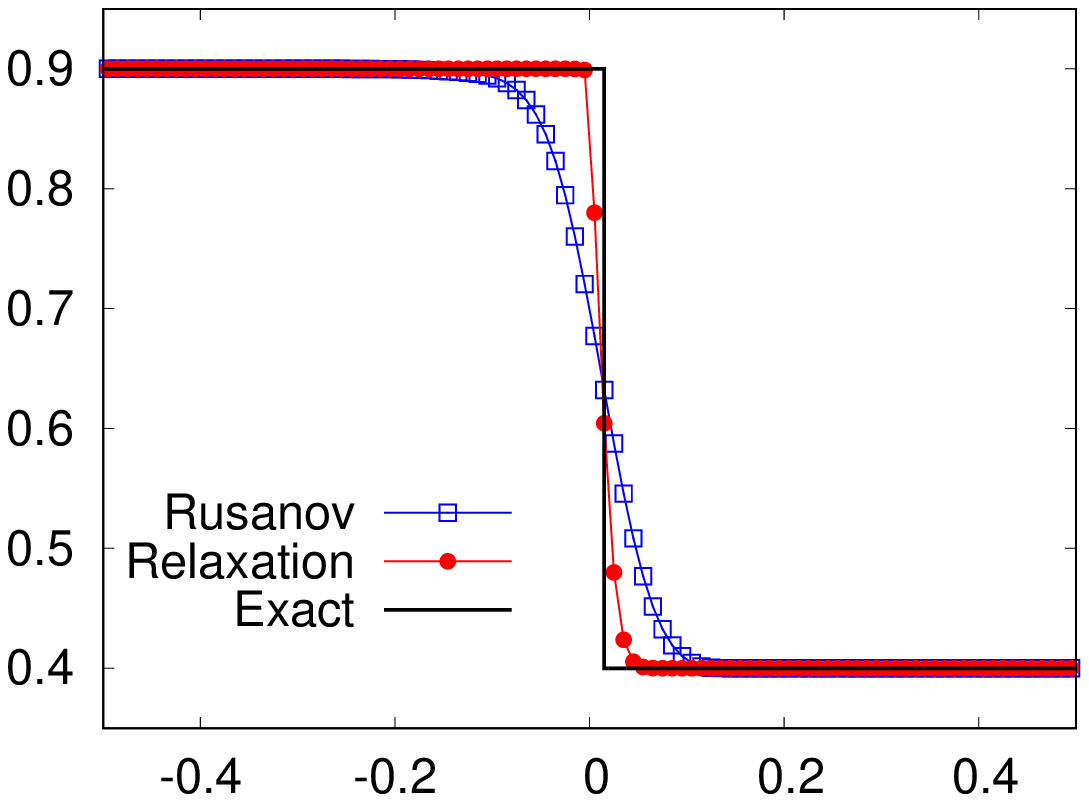}&
\includegraphics[width=5.5cm,height=4.3cm]{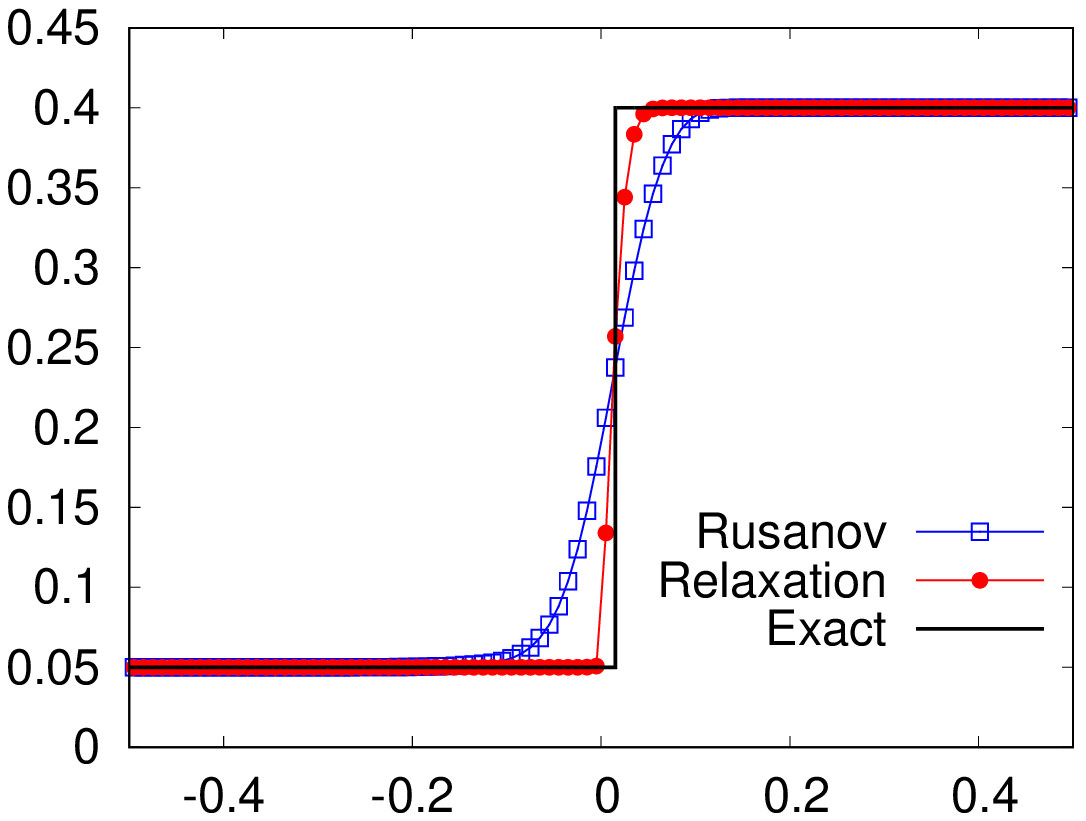}&  \\[3ex]
$u_1$ & $u_2$ & $u_3$ \\[1ex]
\includegraphics[width=5.5cm,height=4.3cm]{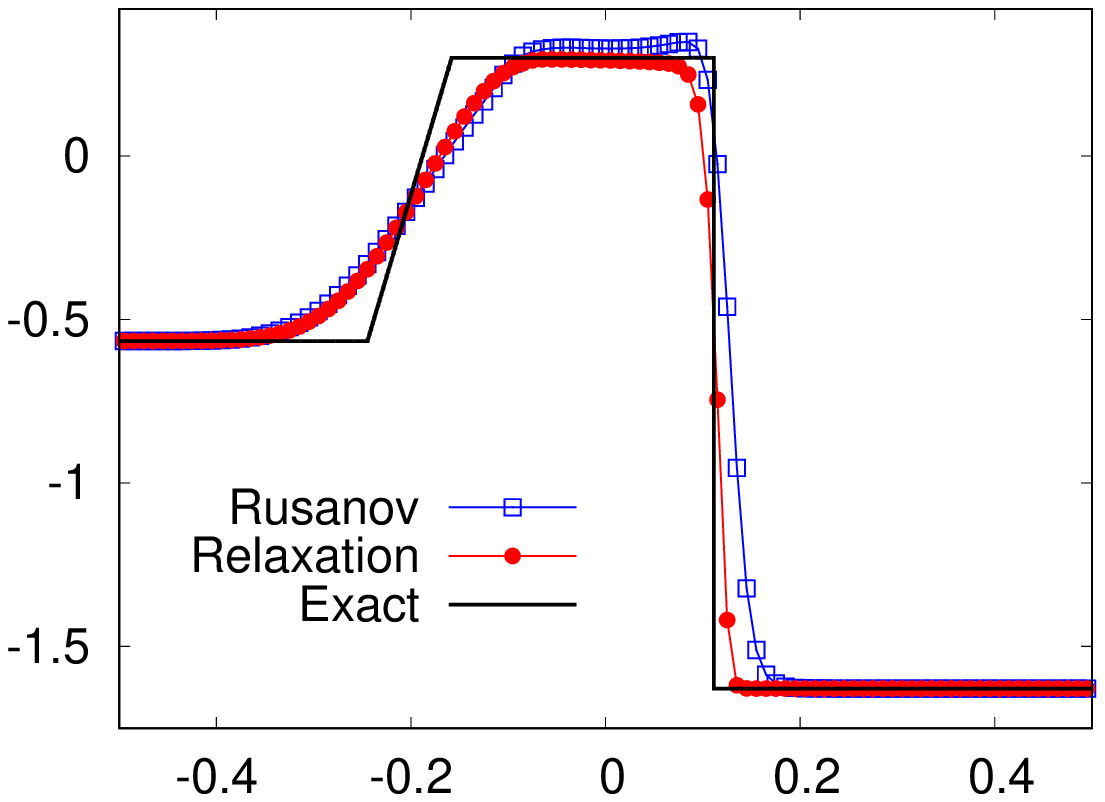} &
\includegraphics[width=5.5cm,height=4.3cm]{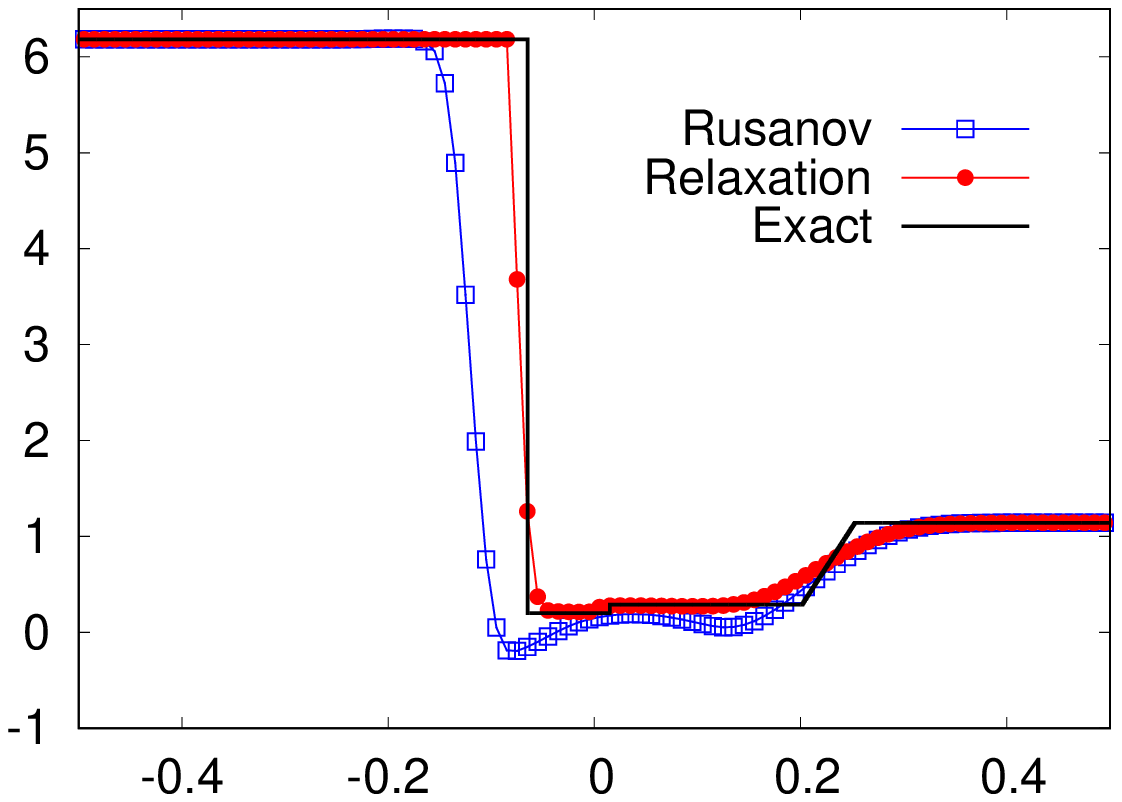} &
\includegraphics[width=5.5cm,height=4.3cm]{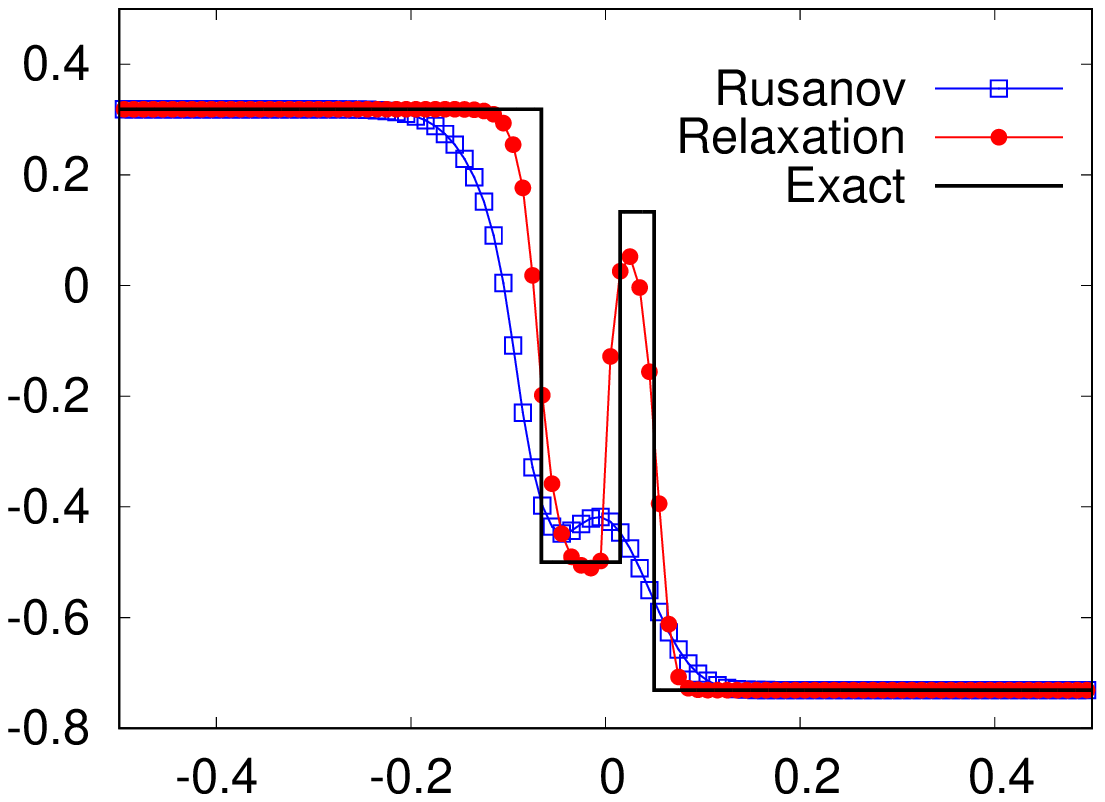}\\[3ex] 
$\rho_1$ & $\rho_2$ & $\rho_3$  \\[1ex]
\includegraphics[width=5.5cm,height=4.3cm]{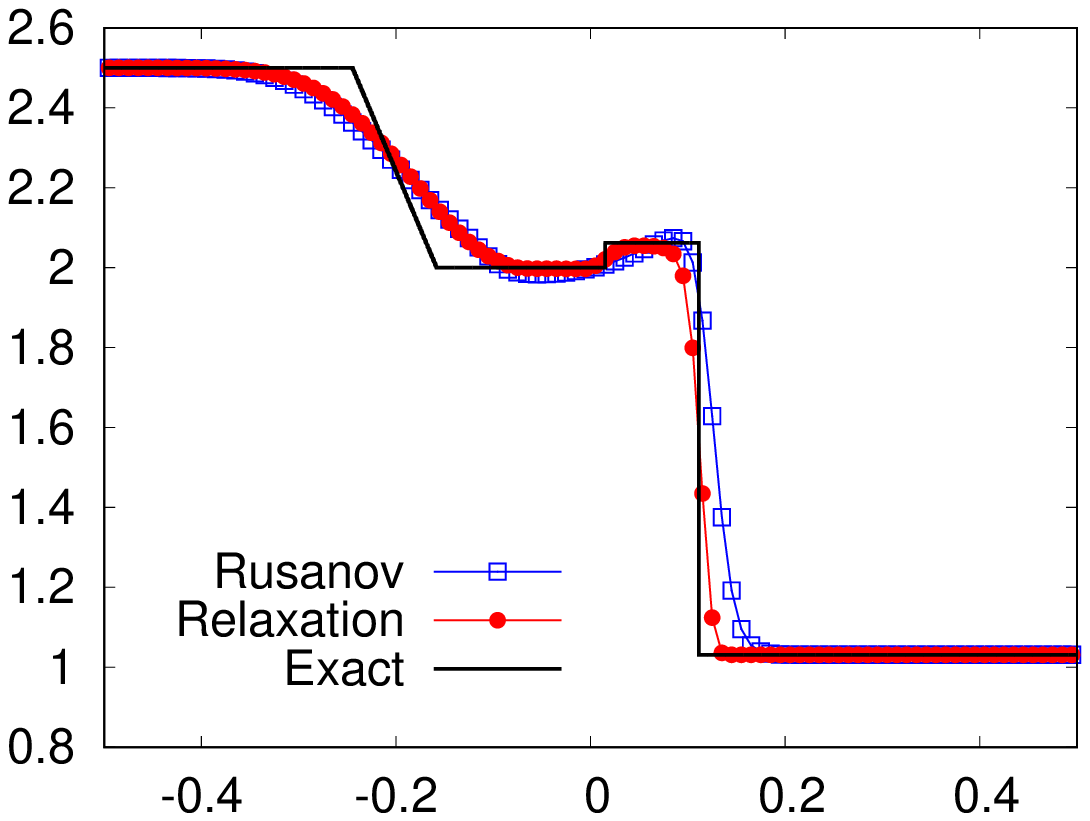} &
\includegraphics[width=5.5cm,height=4.3cm]{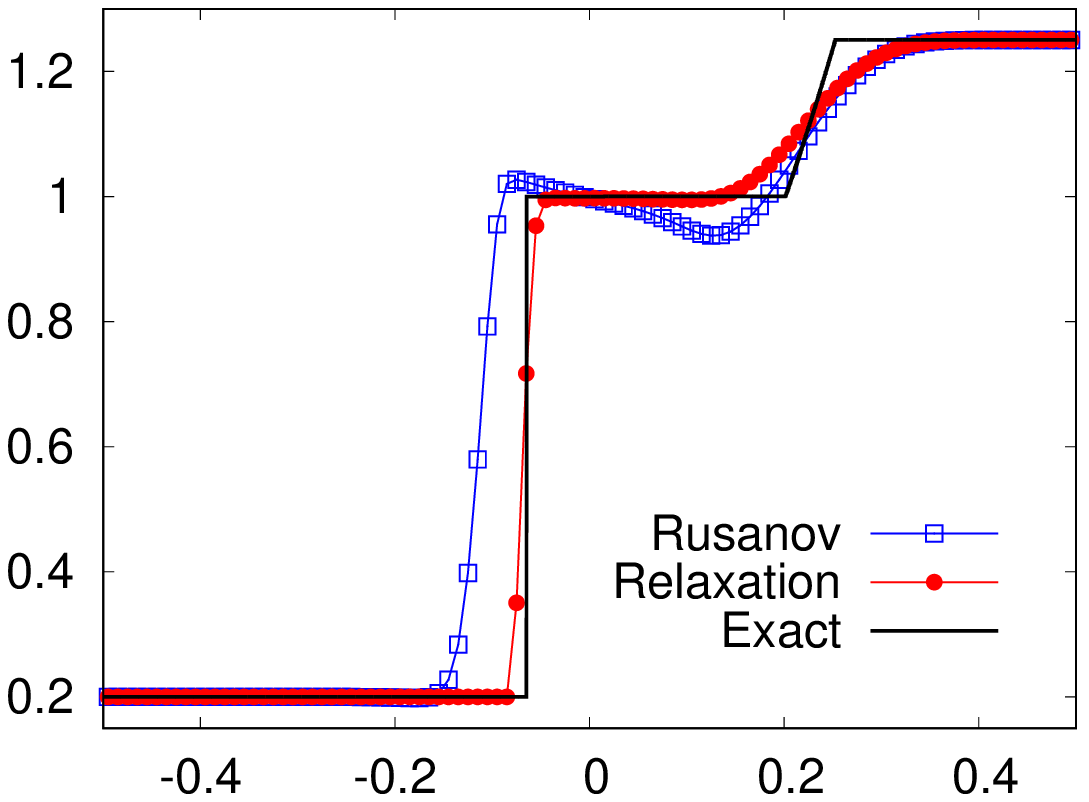} &
\includegraphics[width=5.5cm,height=4.3cm]{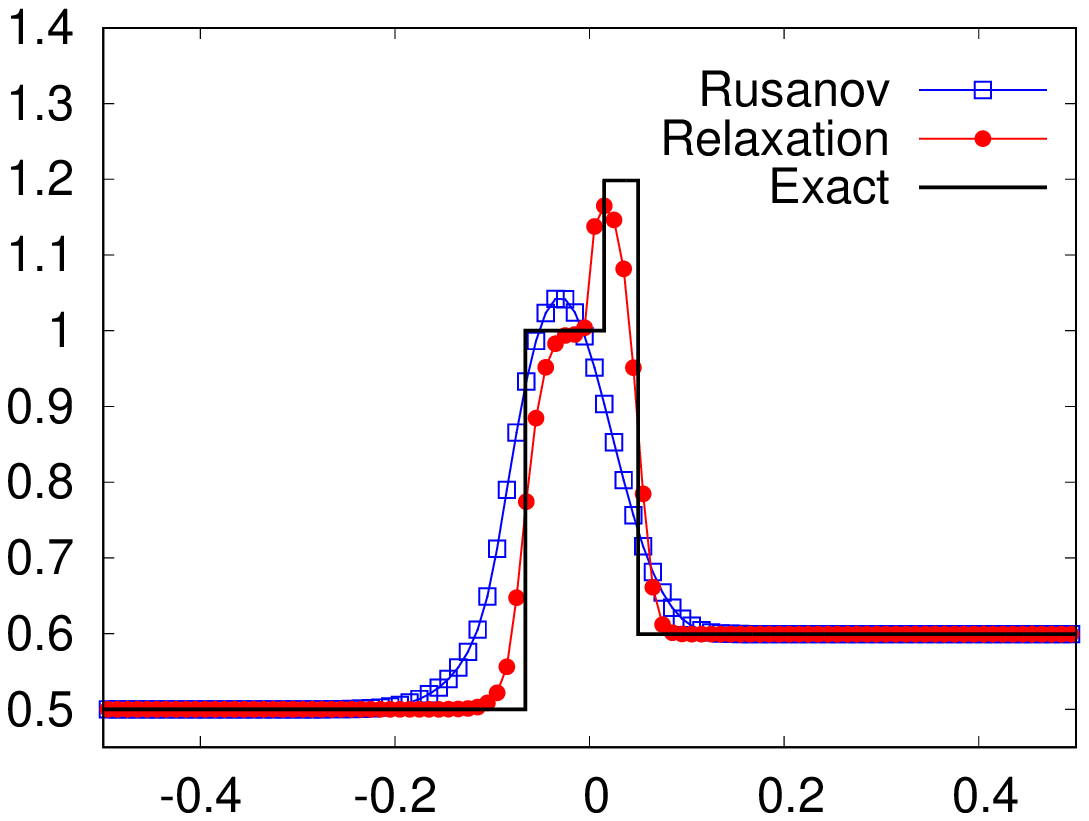} 
\end{tabular}
\protect \parbox[t]{17cm}{\caption{Test-case 1: space variations of the physical variables at the final time $T_{\rm max}=0.05$. Mesh size: $100$ cells.\label{Figcase1}}}
\end{center}
\normalsize
\end{figure}

\begin{figure}[ht!]
\begin{center}
\begin{tabular}{cc}
Rusanov's scheme & Relaxation scheme  \\[1ex]
\includegraphics[width=7cm,height=6cm]{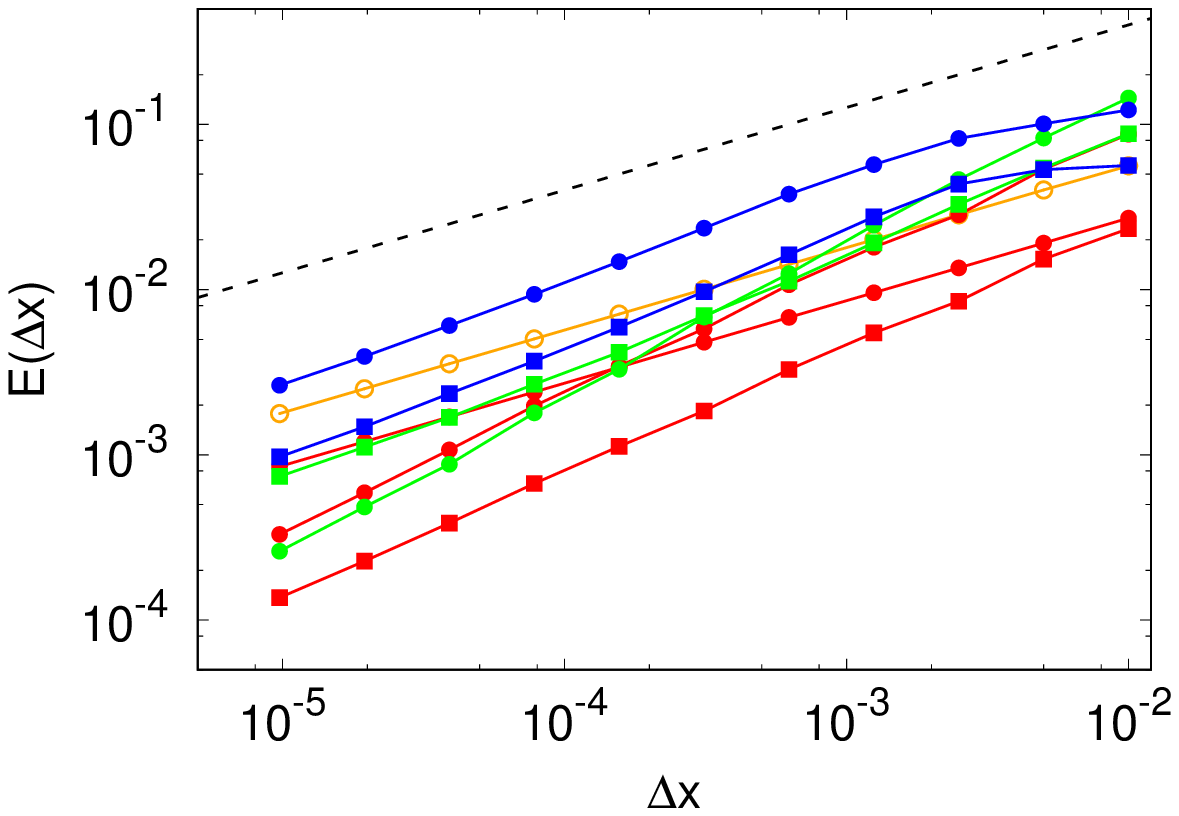}&
\includegraphics[width=10cm,height=6cm]{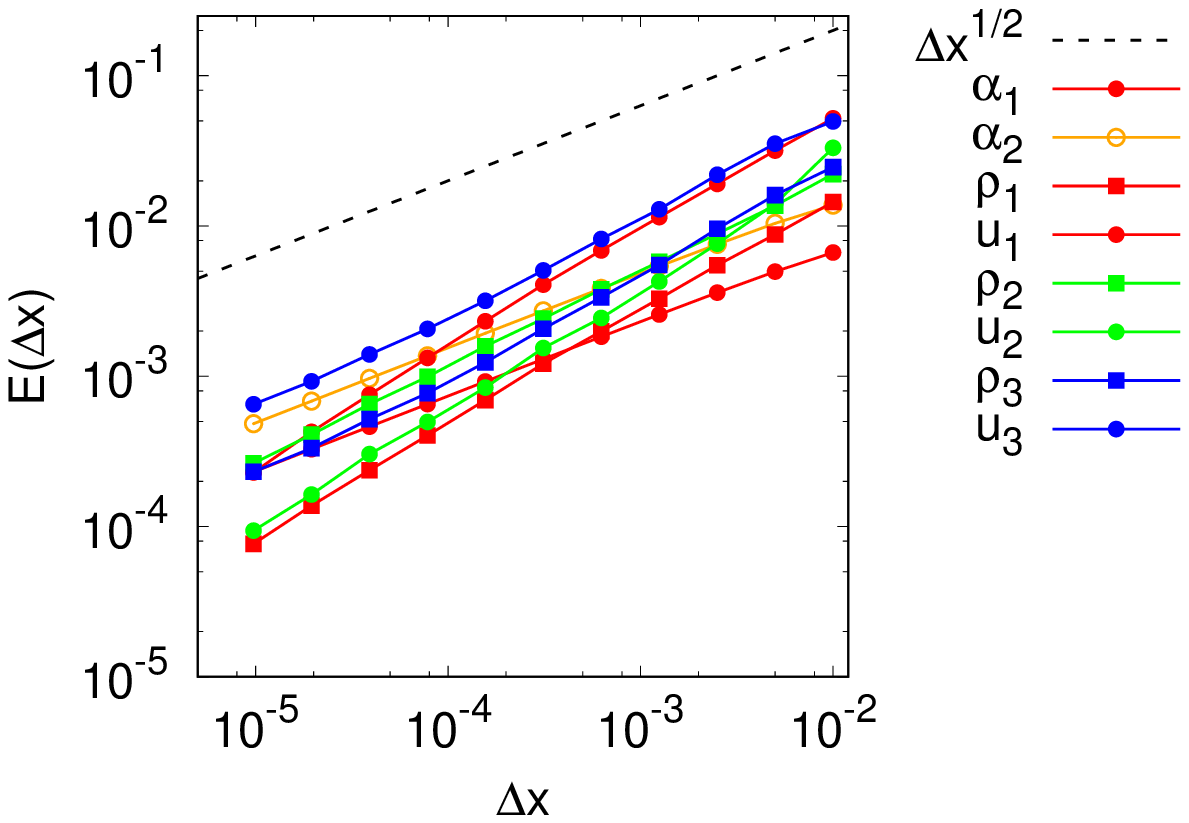}
\end{tabular}
\protect \parbox[t]{17cm}{\caption{Test-case 1: $L^1$-Error with respect to $\Delta x$ for the relaxation scheme and Rusanov's scheme.\label{Figcase1bis}}}
\end{center}
\end{figure}

\begin{figure}[ht!]
\begin{center}
\begin{tabular}{ccc}
$\alpha_1$ & $\alpha_2$ & \\[1ex]
\includegraphics[width=5.5cm,height=4.3cm]{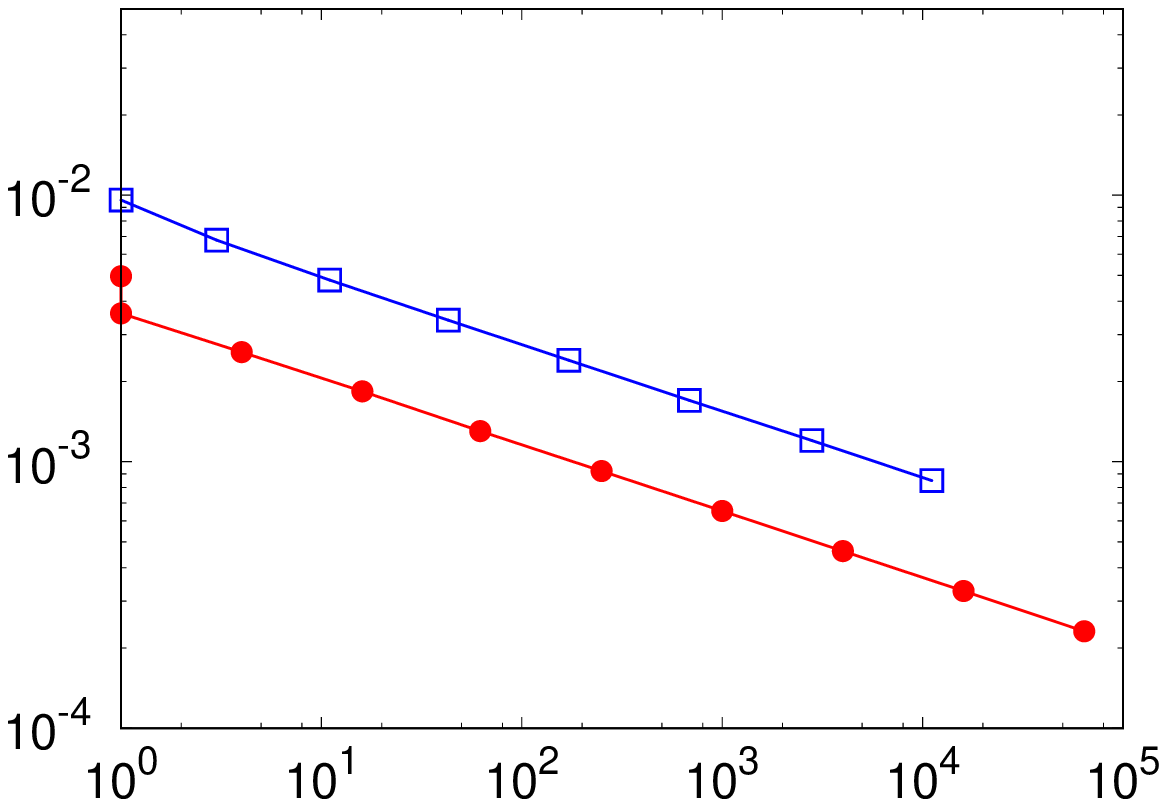}&
\includegraphics[width=5.5cm,height=4.3cm]{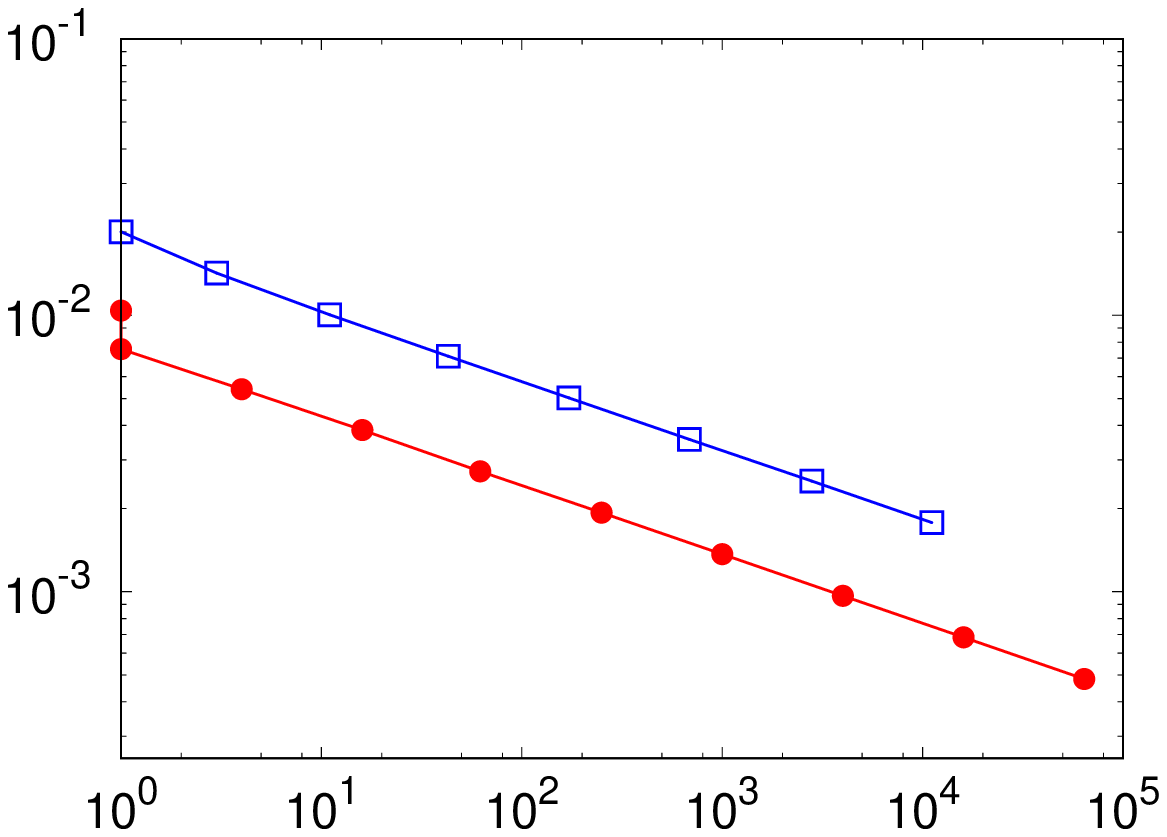}&  \\[3ex]
$u_1$ & $u_2$ & $u_3$ \\[1ex]
\includegraphics[width=5.5cm,height=4.3cm]{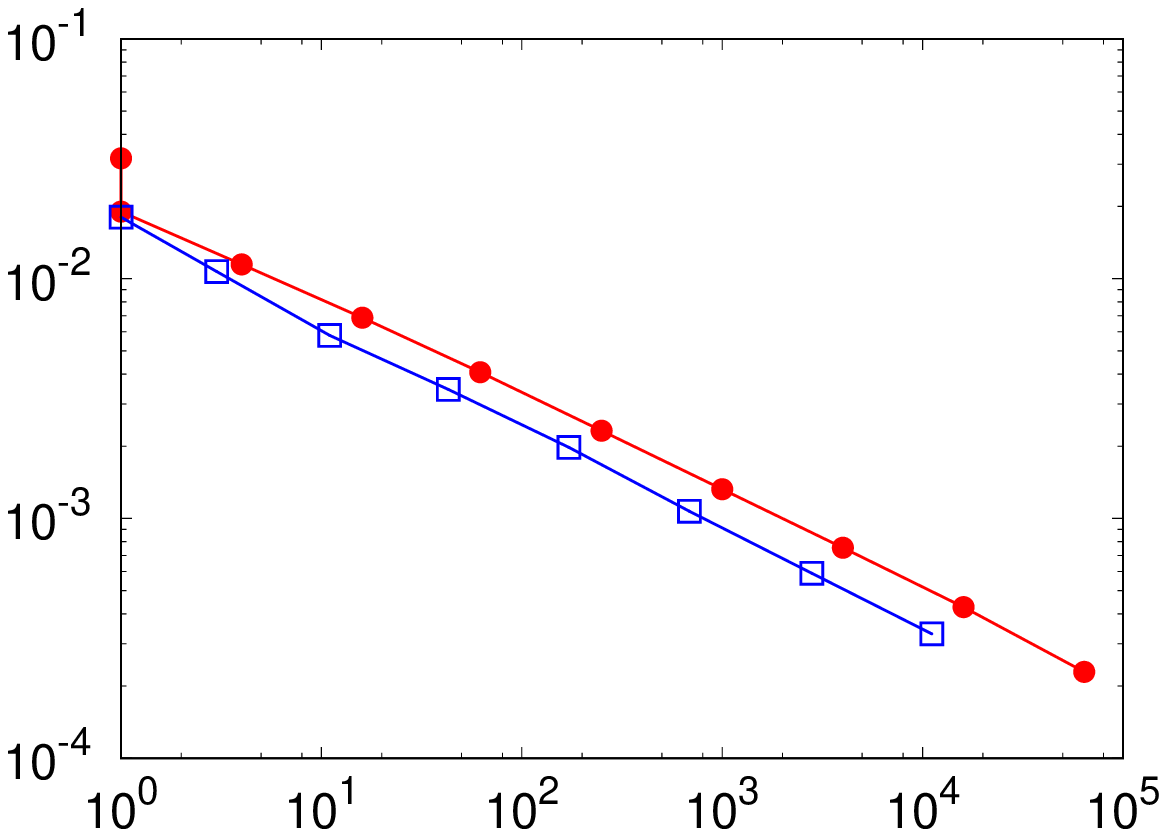} &
\includegraphics[width=5.5cm,height=4.3cm]{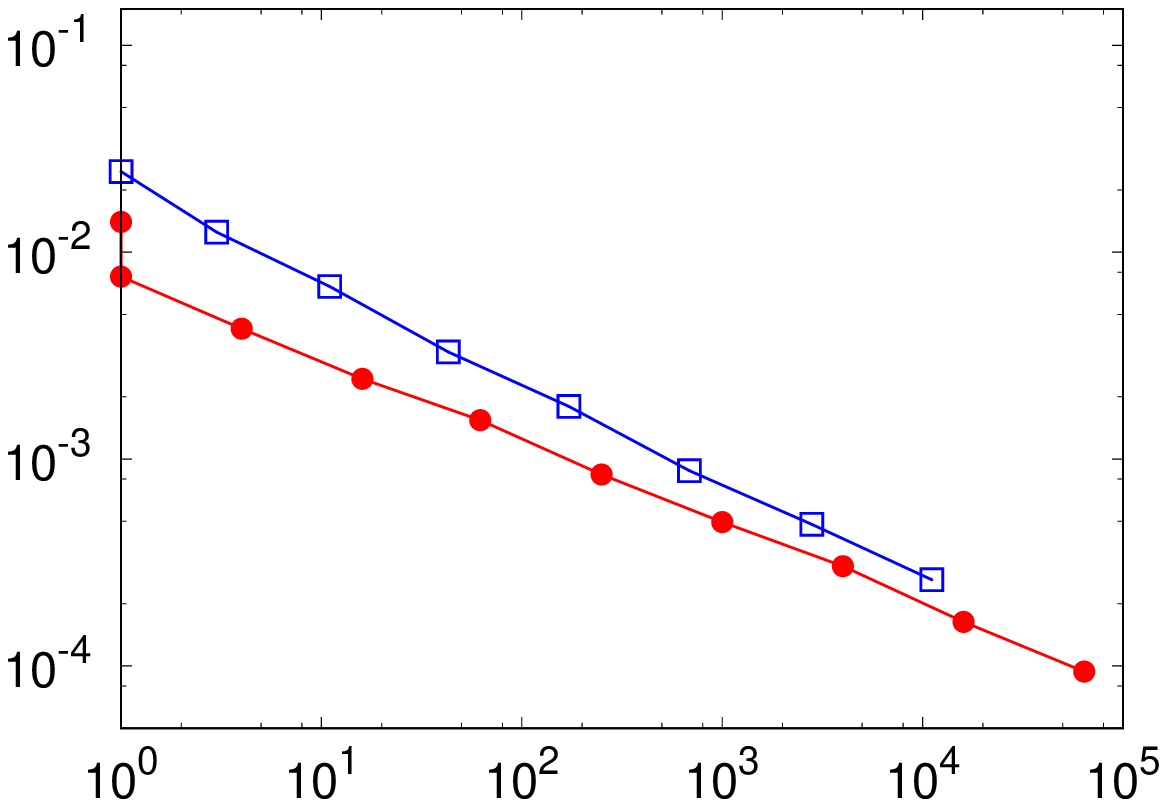} &
\includegraphics[width=5.5cm,height=4.3cm]{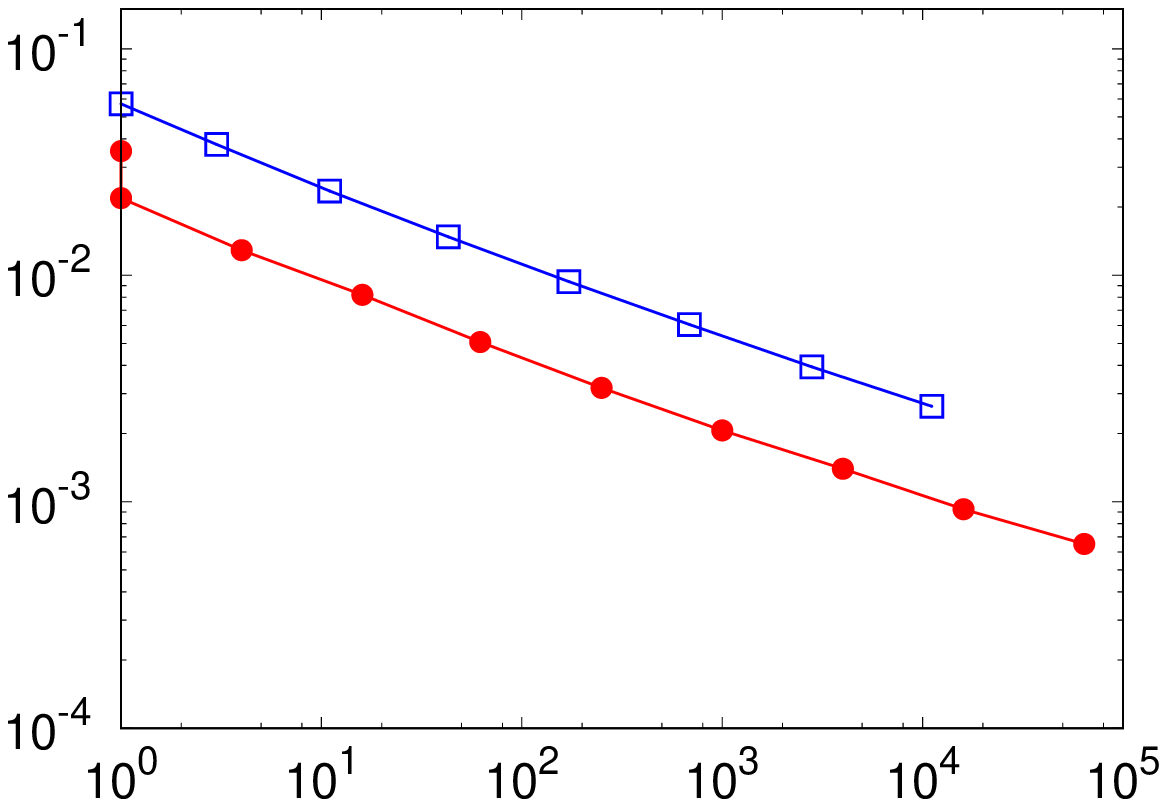}\\[3ex] 
$\rho_1$ & $\rho_2$ & $\rho_3$  \\[1ex]
\includegraphics[width=5.5cm,height=4.3cm]{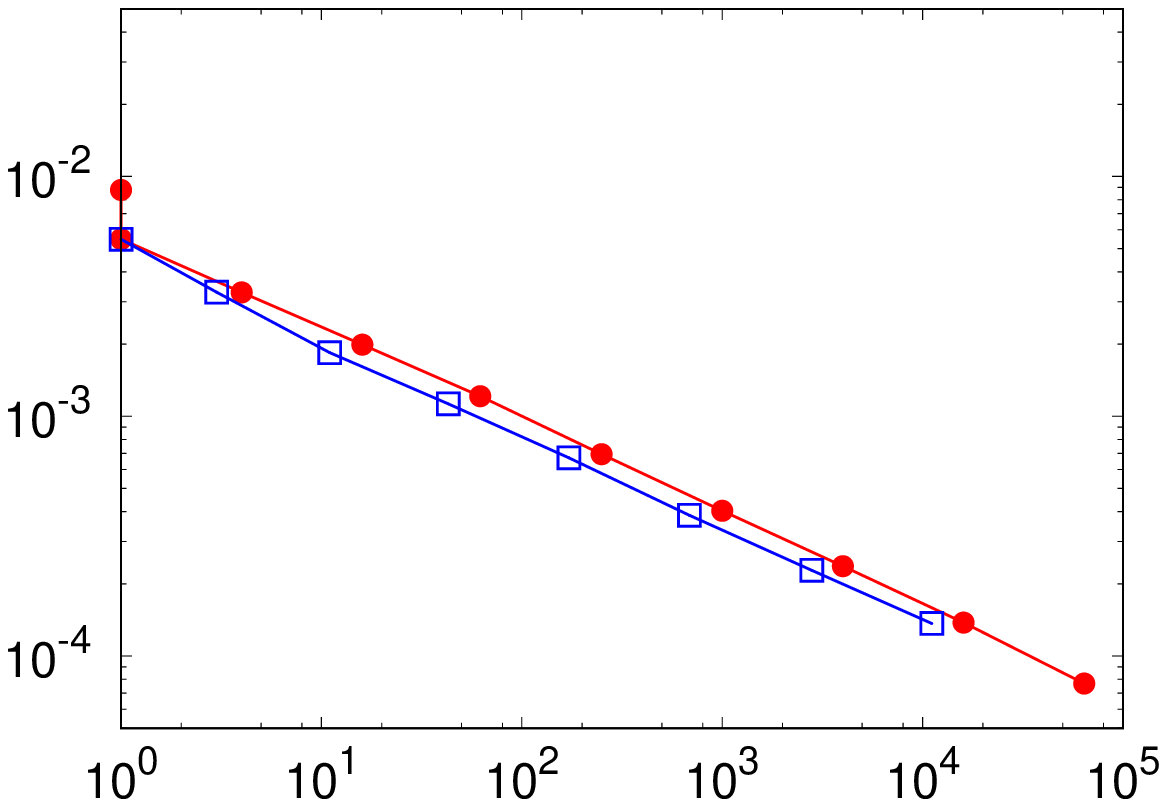} &
\includegraphics[width=5.5cm,height=4.3cm]{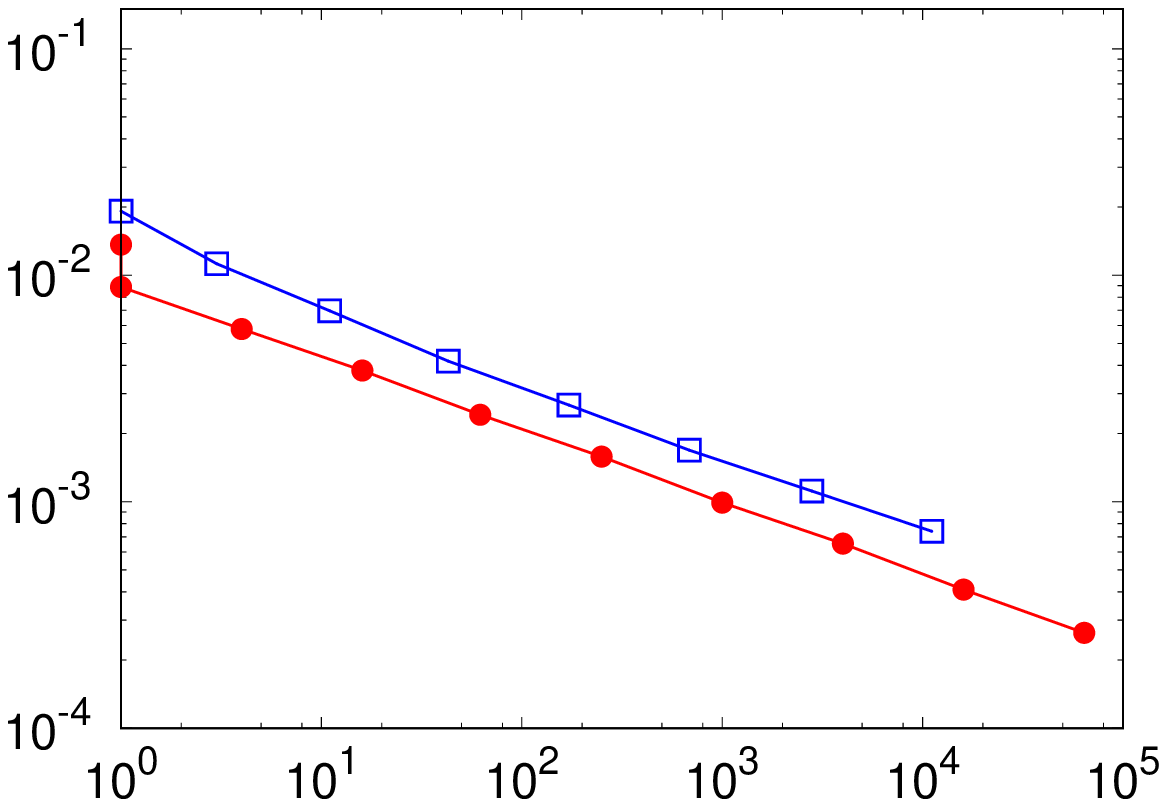} &
\includegraphics[width=5.5cm,height=4.3cm]{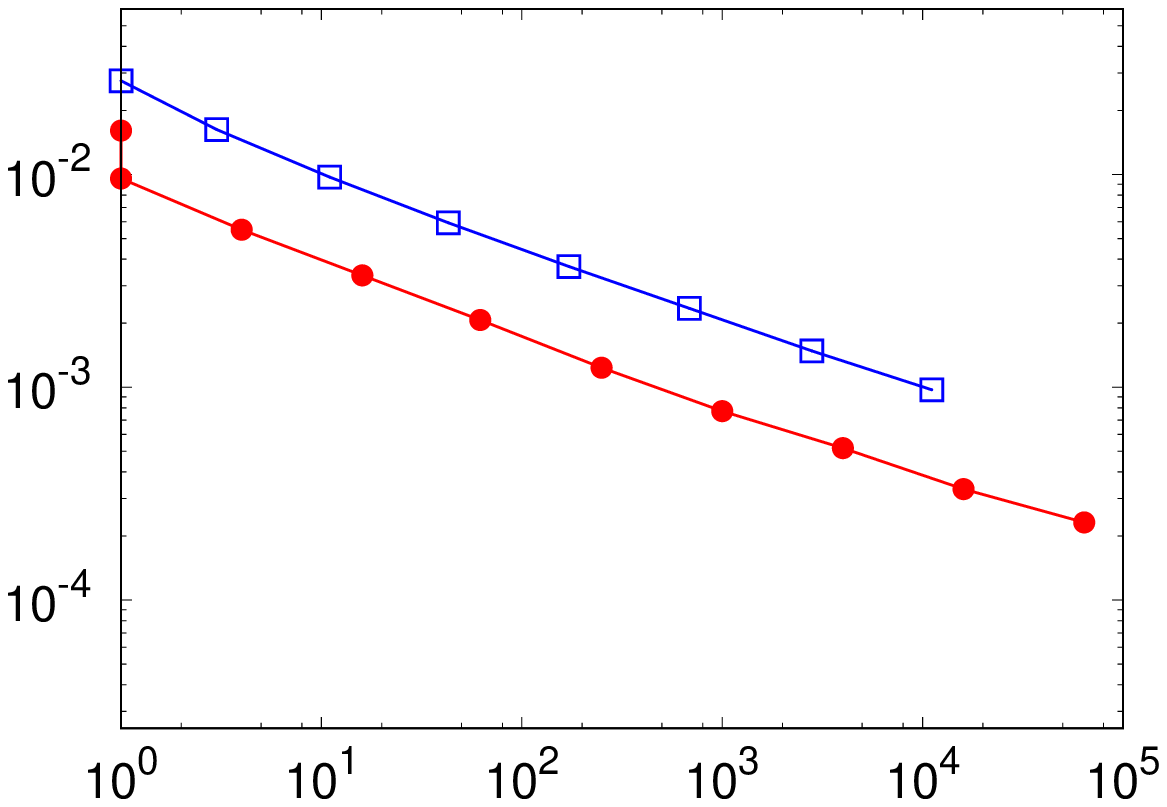} 
\end{tabular}
\protect \parbox[t]{17cm}{\caption{Test-case 1: $L^1$-Error with respect to computational cost (in seconds) for the relaxation scheme (bullets, red line) and Rusanov's scheme (squares, blue line).\label{Figcase1ter}}}
\end{center}
\end{figure}


\subsection{Test-case 2: a Riemann problem with a coupling between a single phase region and a mixture region}
In this test-case, the thermodynamics of all three phases are still given by barotropic \emph{e.o.s.} \eqref{gpeos} with the parameters given in Table \ref{Table_eos2}.
\begin{table}[ht!]
\centering
\begin{tabular}{|ccc|}
\hline
$(\kappa_1,\gamma_1)$ 	& $(\kappa_2,\gamma_2)$	& $(\kappa_3,\gamma_3)$	 \\
\hline
$(1,3)$			& $(10,1.4)$		& $(5,1.6)$    \\
\hline
\end{tabular}
\protect \parbox[t]{13cm}{\caption{E.o.s parameters for Test 2.\label{Table_eos2}}}
\end{table}

Here, we consider a Riemann problem in which two phases vanish in one of the initial states, which means that the corresponding phase fractions are equal to zero. For this kind of Riemann problem, the $u_1$-wave separates a mixture region where the three phases coexist from a single phase region with the remaining phase.

\medskip
The solution is composed of a $\{u_3-c_3\}$-shock wave in the left-hand side (LHS) region where only phase 3 is present. This region is separated by a $u_1$-contact discontinuity from the right-hand side (RHS) region where the three phases are mixed. In this RHS region, the solution is composed of a $\{u_1+c_1\}$-shock, a $\{u_2+c_2\}$-shock and a $\{u_3+c_3\}$-rarefaction wave.

\begin{table}[ht!]
\centering
\begin{tabular}{|c|cccc|}
\hline
		& Region $L$ 	& Region $-$	& Region $+$	& Region $R$ \\
\hline
$\alpha_1$ 	&$0.0$		&$0.0$		&$0.4$		&$0.4$	   \\
$\alpha_2$ 	&$0.0$		&$0.0$		&$0.2$		&$0.2$	   \\
$\rho_1$	&$-$		&$-$		&$1.35516$	&$0.67758$  \\
$u_1$		&$-$		&$-$		&$0.3$		&$-0.96764$   \\
$\rho_2$	&$-$		&$-$		&$1.0$		&$0.5$	   \\
$u_2$		&$-$		&$-$		&$0.3$		&$-2.19213$	   \\
$\rho_3$	&$0.5$		&$1.0$		&$0.99669$	&$1.24587$	   \\
$u_3$		&$2.03047$	&$0.2$		&$0.04917$	&$0.70127$	   \\
\hline
\end{tabular}
\protect \parbox[t]{13cm}{\caption{Test-case 2: left, right and intermediate states of the exact solution.\label{Table_TC2}}}
\end{table}

\medskip
In practice, the numerical method requires values of $\alpha_{1,L}$ and $\alpha_{2,L}$ that lie strictly in the interval $(0,1)$. Therefore, in the numerical implementation, we take $\alpha_{1,L}=\alpha_{2,L}= 10^{-10}$. The aim here is to give a qualitative comparison between the numerical approximation and the exact solution. Moreover, there is theoretically no need to specify left initial values for the phase 1 and phase 2 quantities since this phase is not present in the LHS region. For the sake of the numerical simulations however, one must provide such values. We choose to set $\rho_{1,L}$, $u_{1,L}$, $\rho_{2,L}$, $u_{2,L}$ to the values on the right of the $u_1$-contact discontinuity, which is coherent with the preservation of the Riemann invariants of this wave, and avoids the formation of fictitious acoustic waves for phases 1 and 2 in the LHS region.  For the relaxation scheme, this choice enables to avoid oscillations of the phases 1 and 2 density and velocity in the region where these phases are not present, as seen in Figure \ref{Figcase2}. However, some tests have been conducted that assess that taking other values of $(\rho_{1,L},u_{1,L},\rho_{2,L},u_{2,L})$ has little impact on the phase 3 quantities as well as on the phases 1 and 2 quantities where these phases are present.

\medskip
As expected, we can see that for the same level of refinement, the relaxation method is more accurate than Rusanov's scheme. As regards the region where phases 1 and 2 do not exist, we can see that the relaxation scheme is much more stable than Rusanov's scheme. Indeed, the relaxation scheme behaves better than Rusanov's scheme when it comes to divisions by small values of $\alpha_1$ or $\alpha_2$, since the solution approximated by Rusanov's scheme develops quite large oscillations. Moreover, the amplitude of these oscillations increases with the mesh refinement !

\begin{figure}[ht!]
\begin{center}
\begin{tabular}{ccc}
$\alpha_1$ & $\alpha_2$ & \\[1ex]
\includegraphics[width=5.5cm,height=4.3cm]{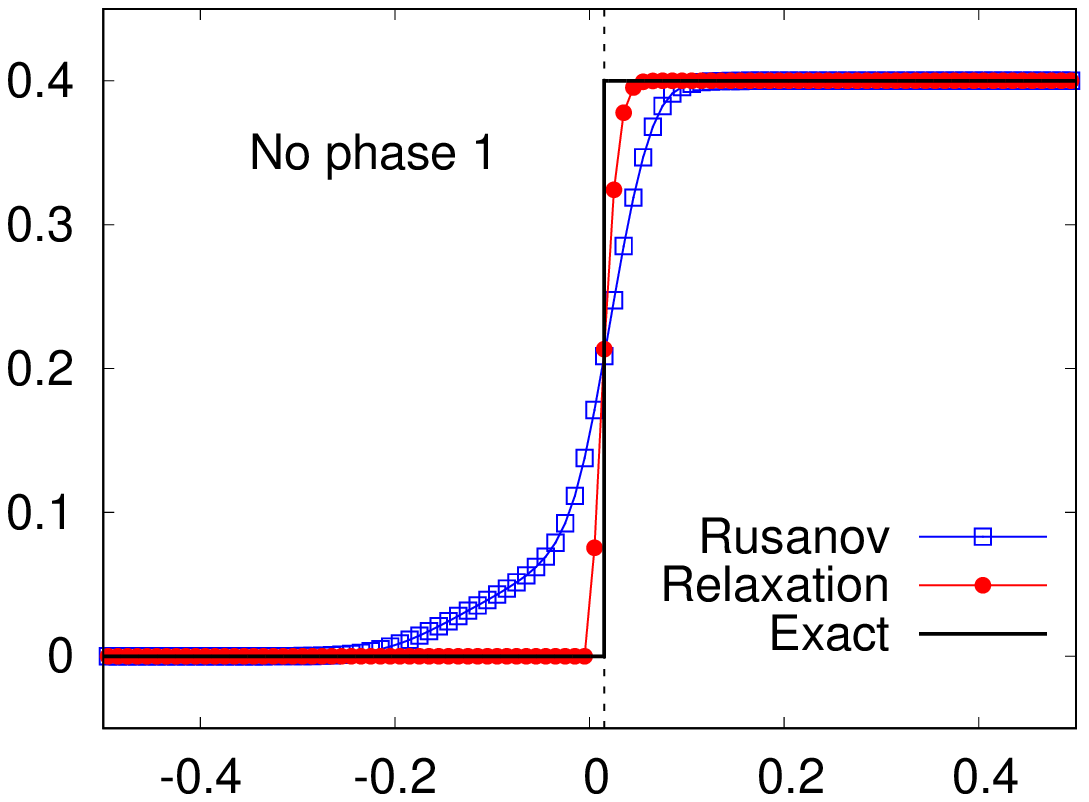}&
\includegraphics[width=5.5cm,height=4.3cm]{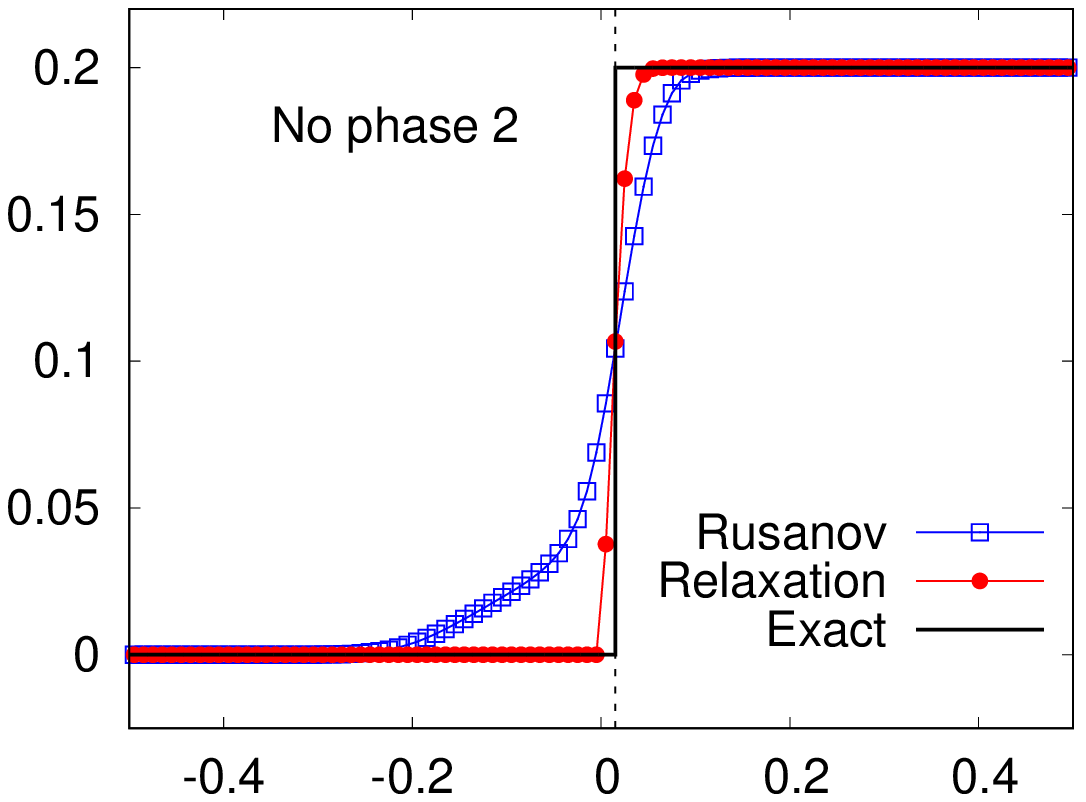}&  \\[3ex]
$u_1$ & $u_2$ & $u_3$ \\[1ex]
\includegraphics[width=5.5cm,height=4.3cm]{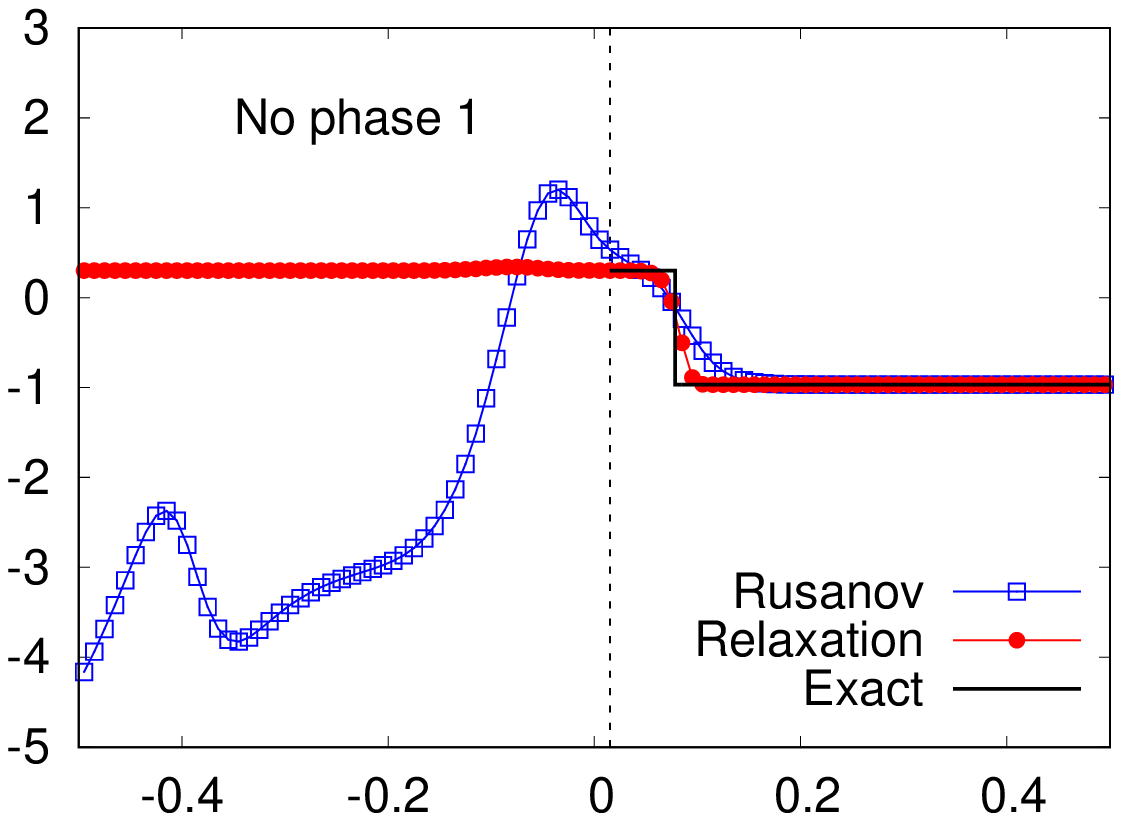} &
\includegraphics[width=5.5cm,height=4.3cm]{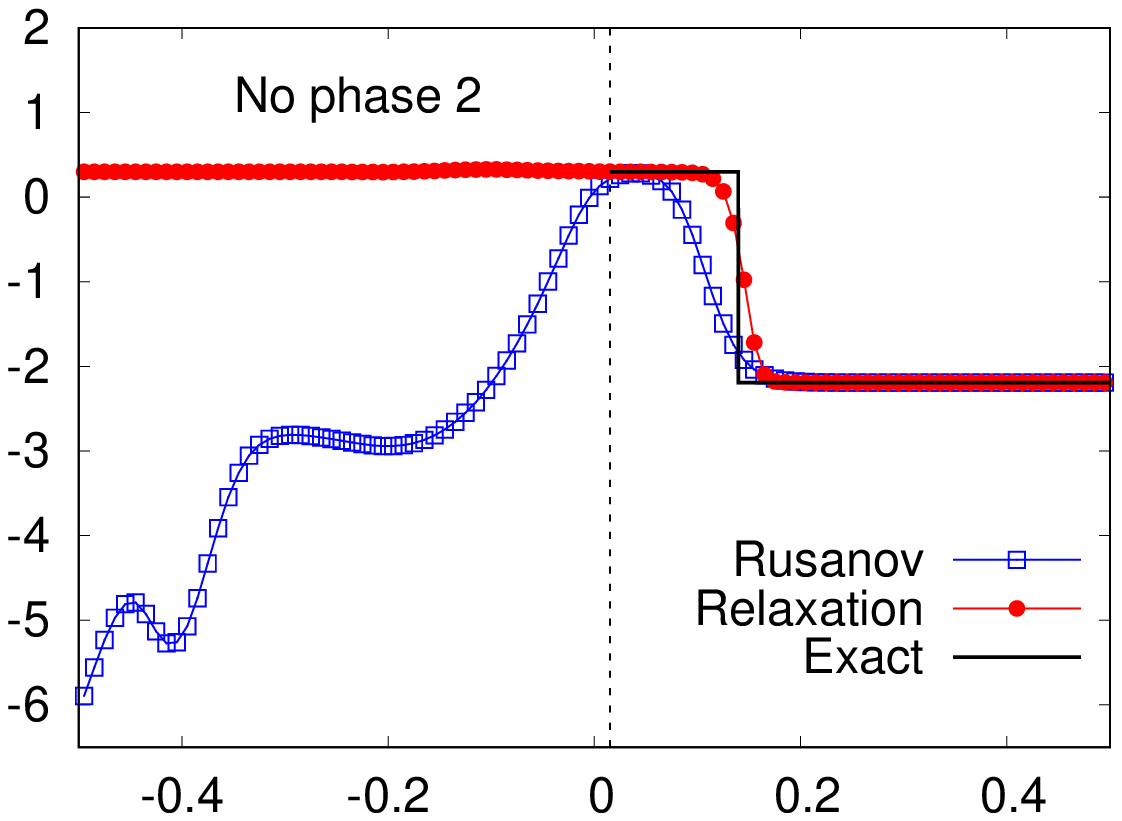} &
\includegraphics[width=5.5cm,height=4.3cm]{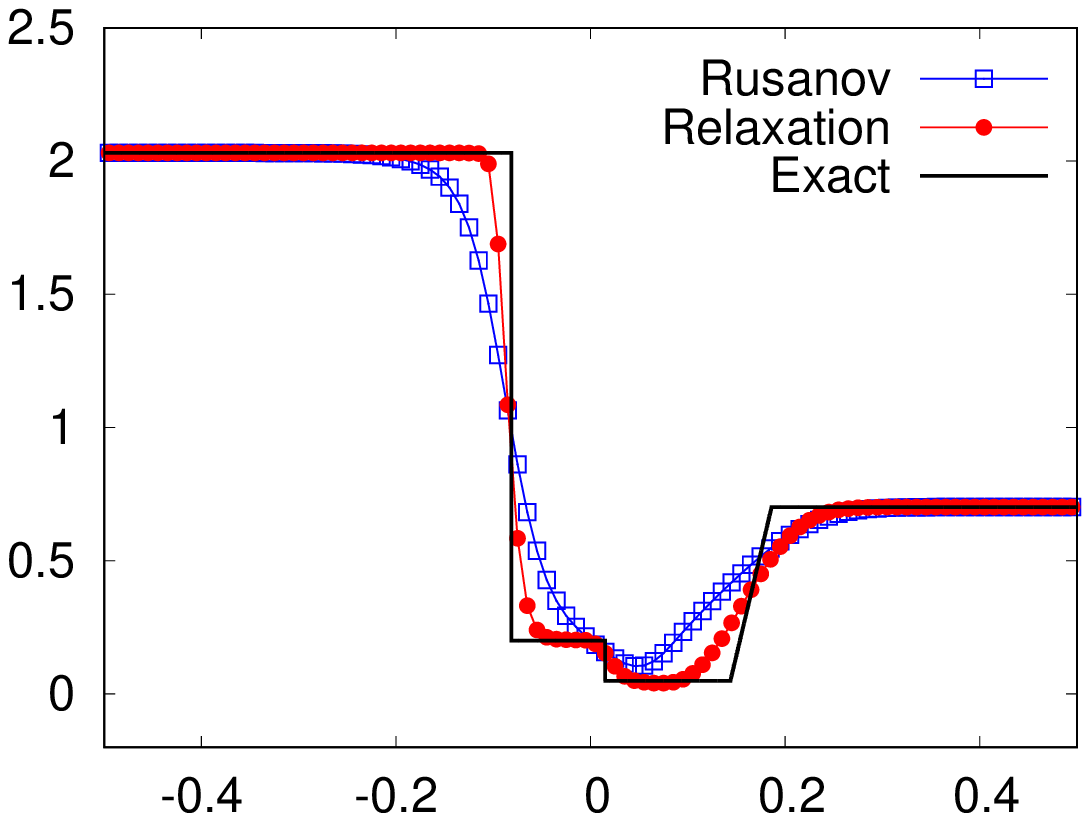}\\[3ex] 
$\rho_1$ & $\rho_2$ & $\rho_3$  \\[1ex]
\includegraphics[width=5.5cm,height=4.3cm]{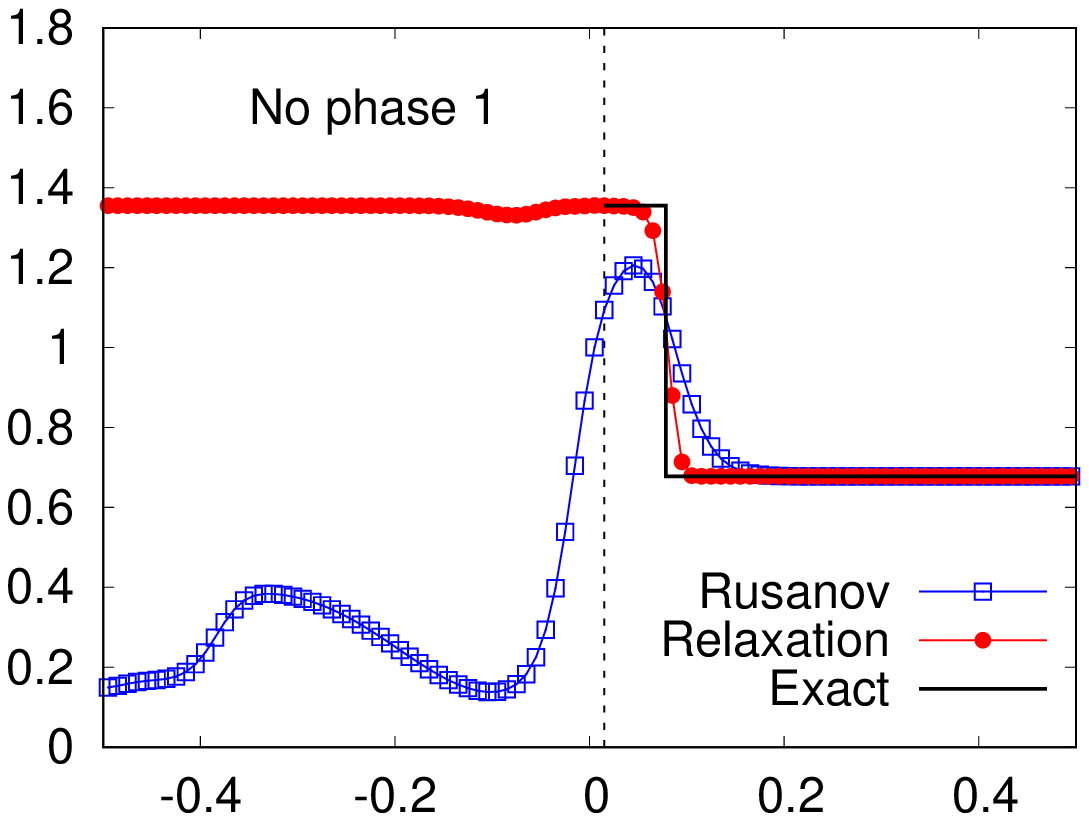} &
\includegraphics[width=5.5cm,height=4.3cm]{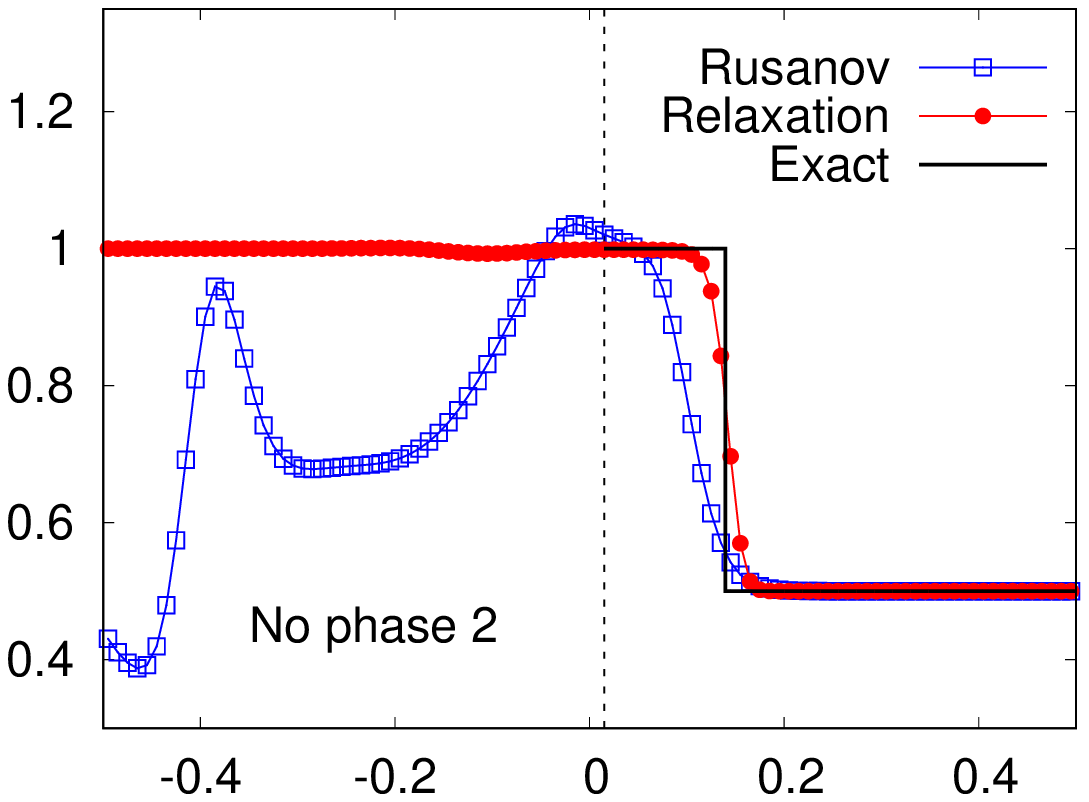} &
\includegraphics[width=5.5cm,height=4.3cm]{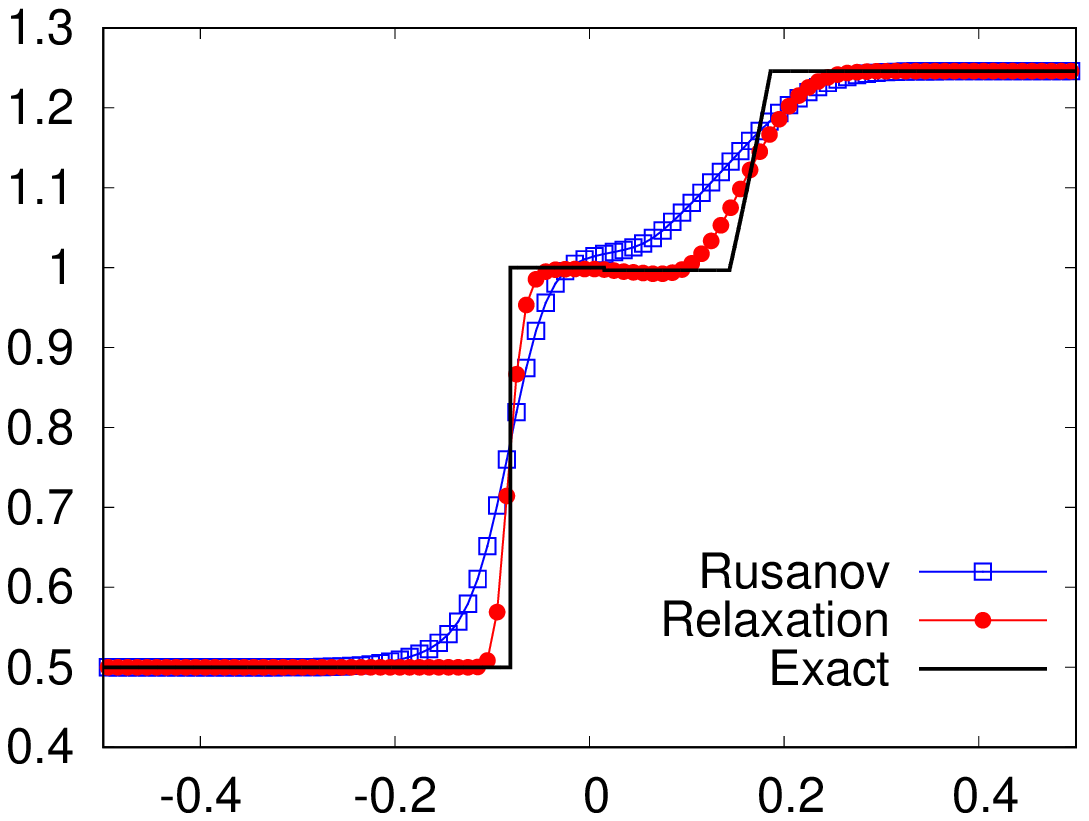} 
\end{tabular}
\protect \parbox[t]{17cm}{\caption{Test-case 2: space variations of the physical variables at the final time $T_{\rm max}=0.05$. Mesh size: $100$ cells.\label{Figcase2}}}
\end{center}
\normalsize
\end{figure}

\clearpage

\subsection{Test-case 3: interaction of a gas shock wave with a lid of rigid particles}

In this test-case, we consider a configuration where an incoming gas shock wave hits a cloud of spherical rigid particles. A sketch of the shock tube apparatus is given in Figure \ref{shock-tube}. The tube ranges from $x=0\,m$ to $x=3.75\,m$ and is closed at both ends. At the initial time $t=0\,s$, the cloud of particles lies between $x=2.97\,m$ and $x=3.37\,m$. This test-case is adapted from the experimental setup presented in \cite{chau-12-et,chau-11-exp}.

\begin{figure}[ht!]
\centering
\begin{center}
\begin{tikzpicture}[scale=3.5]
\fill [gray!20] (0.75,0) -- (0.75,0.5) -- (0,0.5) -- (0,0); 
\foreach \n in {1,2,...,10}
{
  \foreach \p in {1,2,...,8}
  {
      \draw [fill=black] (2.97+\p*0.05-0.02,\n*0.05-0.02) circle (0.02);
  }
}
\foreach \p in {0.1, 0.2, 0.3, 0.4, 0.5} {\draw [very thick,- ] (-0.2,\p-0.1) -- (0,\p); \draw [very thick,- ] (3.75+0.2,\p-0.1) -- (3.75,\p);}

\draw (0.37,0.25) node {\small High pressure};
\draw (2.0,0.25) node {\small Low pressure};

\draw [thick,-] (0,0.1) -- (0,-0.05) node[below=2pt] {$x=0$};
\draw [thick,-] (0.75,0.0) -- (0.75,-0.05) node[below=2pt] {$0.75$};
\draw [thick,-] (2.97,0.0) -- (2.97,-0.05) node[below=2pt] {$2.97$};
\draw [thick,-] (3.37,0.0) -- (3.37,-0.05) node[below=2pt] {$3.37$};
\draw [thick,-] (3.75,0.0) -- (3.75,-0.05) node[below=2pt] {$3.75$};


\draw [thick,<-] (2.7,0.5) -- (2.7,0.65) node [above=1pt] {$\substack{S_1 \\ x=2.7}$};
\draw [thick,<-] (3.0,0.5) -- (3.0,0.65) node [above=1pt] {$\substack{S_2 \\ x=3.0}$};
\draw [thick,<-] (3.2,0.5) -- (3.2,0.65) node [right=10pt,above=1pt] {$\substack{S_3 \\ x=3.2}$};
\draw [thick,<-] (3.7,0.5) -- (3.7,0.65) node [above=1pt] {$\substack{S_4 \\ x=3.7}$};

\draw [thick, dashed] (0.75,0) -- (0.75,0.5);
\draw [very thick,->] (-0.5,0)--(4.25,0) node[right=3pt] {$x$};
\draw [very thick,- ] (0,0) -- (0,0.5) -- (3.75,0.5) -- (3.75,0);
\end{tikzpicture}
\end{center}
\protect \parbox[t]{14cm}{\caption{Test-case 3: sketch of the experimental shock tube apparatus at $t=0$.\label{shock-tube}}}
\end{figure}
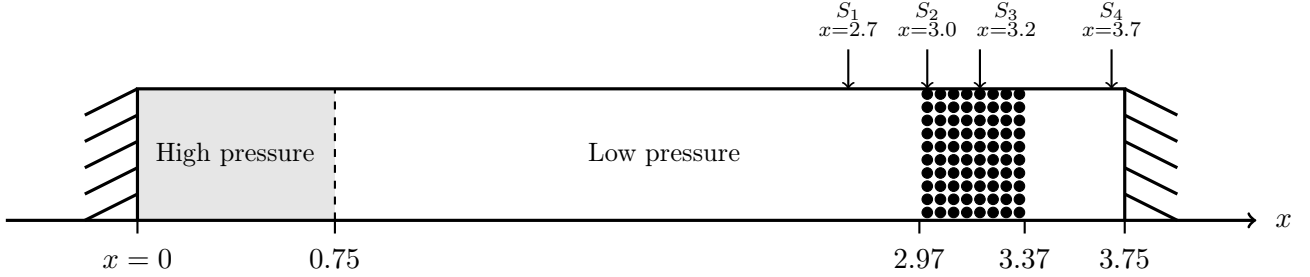

\medskip
A gas pressure disequilibrium is initiated at $x=0.75\,m$.
This initial pressure disequilibrium produces a left going gas rarefaction wave and a right-going gas shock wave. The rarefaction wave is soon reflected by the left wall while the shock first hits the lid of particles (producing a small left-going wave) and is then reflected by the right wall after crossing the lid of particles. In order to retrieve this behavior, four pressure transducers are located at stations $S_n$ for $n=1,..,4$ (see Figure \ref{shock-tube}).

\medskip
We decide to simulate this experiment with Rusanov's scheme and the relaxation scheme for the barotropic three-phase flow model. The particle phase has label 1 while the gas phase has label 2. Phase 3 is an ``absent'' phase, the statistical fraction of which is set to $\alpha_3=10^{-10}$ everywhere. At time $t=0$, the particle phase is residual outside the lid of particles and we set $\alpha_1=10^{-10}$ for $x\notin(2.97,3.37)$. In the lid of particles, \emph{i.e.} for $x\in (2.97,3.37)$ we set $\alpha_1=0.0104$.
At the initial time, all phases are at rest and in relative pressure equilibrium, with a space pressure disequilibrium at $x=0.75\,m$, thus for $k=1,..,3$:
\[
\begin{aligned}
& u_k(x,t=0)=0\, m.s^{-1}, \\[1ex]
& p_k(x,t=0)= \left \lbrace \begin{array}{ll}
                                7.10^5\,Pa, & \text{for $x<0.75\,m$,}\\
                                1.10^5\,Pa, & \text{for $x>0.75\,m$.}
                             \end{array}
                    \right.
\end{aligned}
\]
The particle phase follows a barotropic stiffened gas equation of state:
\begin{equation*}
 p_1(\rho_1) = c_1^2 \rho_1 + P_{1,{\rm ref}}, 
\end{equation*}
with $c_1=1500\,m.s^{-1}$ and $P_{1,{\rm ref}}<0$ is such that $p_1(\rho_{1,\rm{ref}})=10^5\, Pa$ where $\rho_{1,\rm{ref}}=10^3\, kg.m^{-3}$. The gas phase follows a barotropic ideal gas pressure law:
\begin{equation*}
 p_2(\rho_2) = \kappa_2 \rho_2^{\gamma_2},
\end{equation*}
with $\gamma_2=7/5$ and $\kappa_2$ is such that $p_2(\rho_{2,\rm{ref}})=10^5\, Pa$ where $\rho_{2,\rm{ref}}=1.27\, kg.m^{-3}$. In the computations, the same \emph{e.o.s.} is chosen for the residual phase with label 3. 

\begin{figure}[ht!]
\begin{center}
\begin{tabular}{cc}
Rusanov's scheme & Relaxation scheme  \\[1ex]
\includegraphics[width=8.5cm,height=6cm]{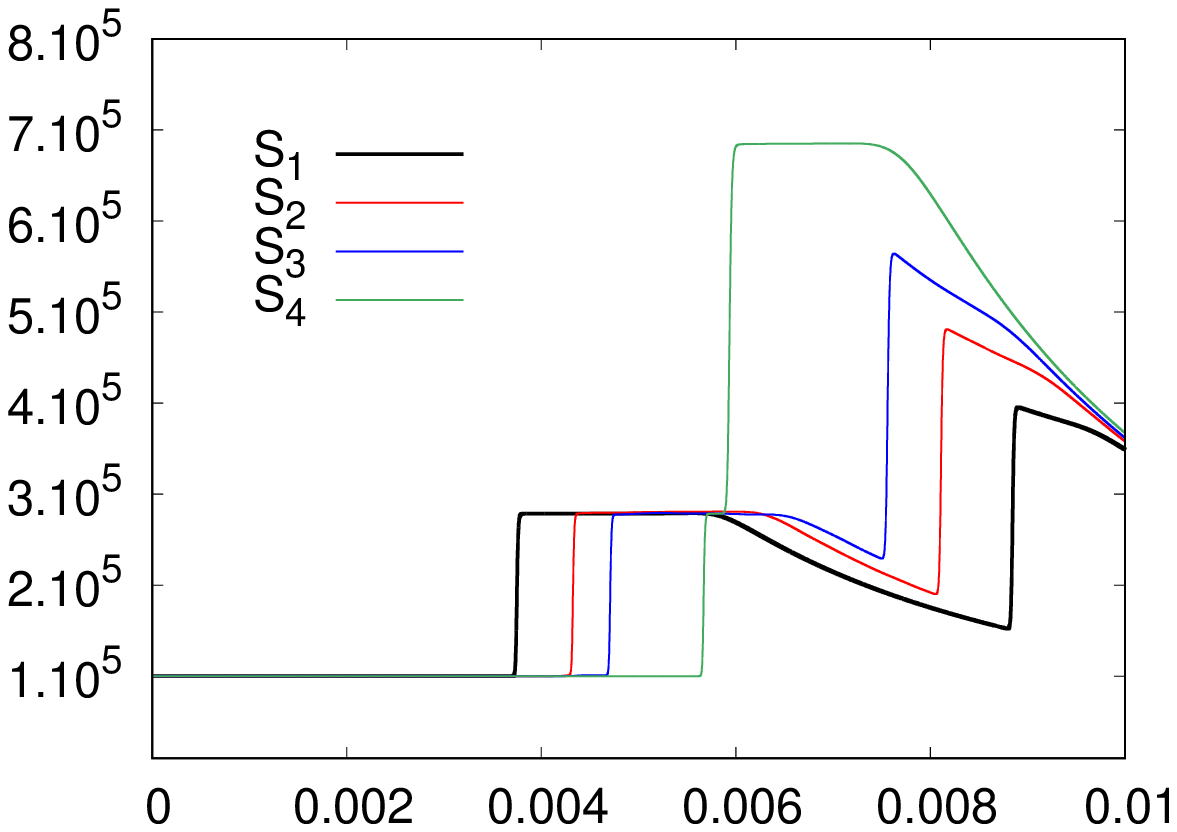}&
\includegraphics[width=8.5cm,height=6cm]{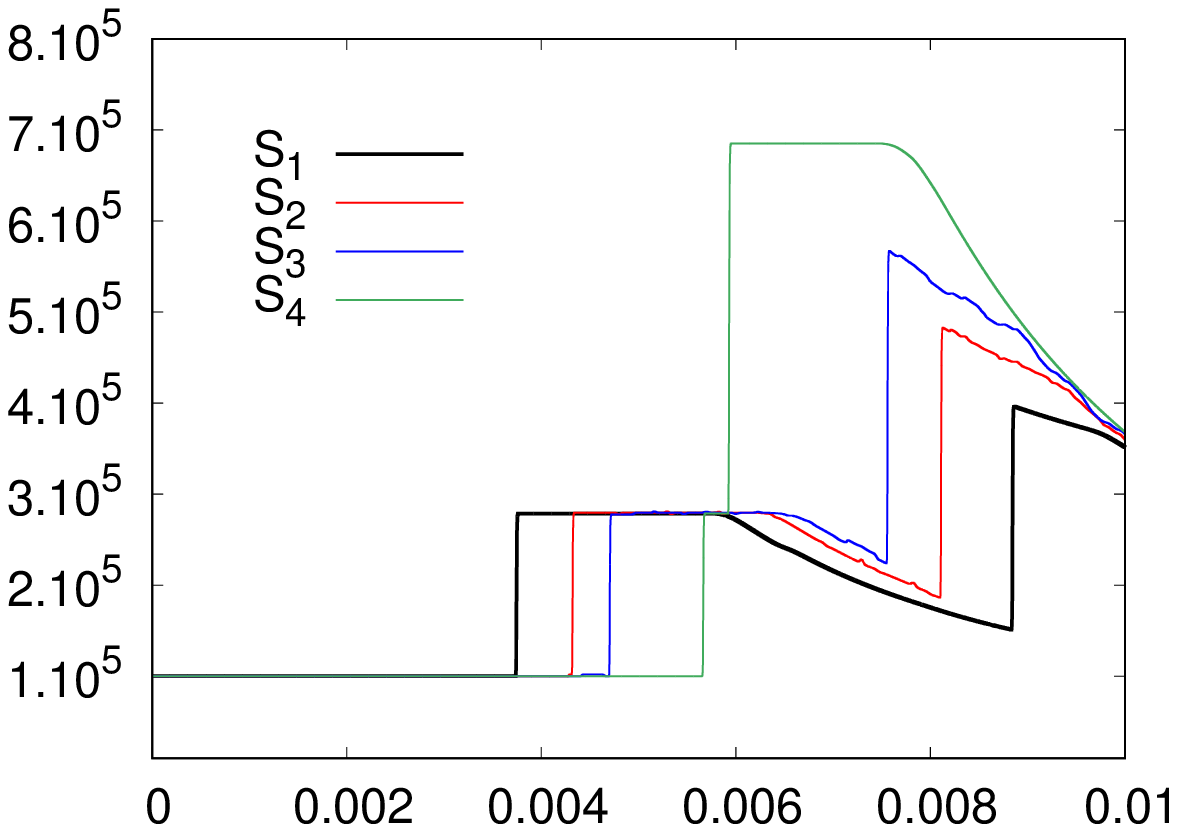}
\end{tabular}
\protect \parbox[t]{17cm}{\caption{Test-case 3: Mean pressure signals (in $Pa$) \emph{w.r.t} time (in $s$) at stations $S_n$, $n=1,..,4$. \label{Figcase6}}}
\end{center}
\end{figure}

\medskip
Figure \ref{Figcase6} displays the numerical approximations on the time interval $(0,0.01)$ of the total mean pressure $P=\sum_{k=1}^3 \alpha_k p_k$ at stations $S_n$ for $n=1,..,4$. The obtained computations are run with a 5000 cell mesh with both the relaxation scheme and Rusanov's scheme. For both schemes, the observed curves reflect the expected behavior of the total mean pressure. At station $S_1$, the pressure first jumps to a value $P^*$
when the right-going gas shock wave reaches $x=2.7\,m$ around time $t=0.0037\,s$. The pressure at this location then remains steady until the reflection of the left-going rarefaction wave meets this position around time $t=0.0058 \,s$ which causes a decrease of the pressure until a second jump occurs (around time $t=0.0088\,s$) due to the reflection of the right-going gas shock wave which has hit the right wall boundary after crossing the lid of particles. Similar features are observed at stations $S_2$ and $S_3$ with the (expected) difference that the more rightward the station is located, the later the first pressure jump occurs and the sooner the second pressure jump occurs. If we now turn to station $S_4$, a first pressure jump to the value $P^*$ is recorded around time $t=0.0056 \,s$ due to the right-going gas shock wave, soon followed by a second pressure jump (around $t=0.0059\,s$) to a value $P^{**}$ due to the reflection of this wave on the right wall boundary. The pressure then remains steady until the location of station $S_4$ is reached by the reflection of the left-going rarefaction wave on the left wall boundary, causing a decrease in the pressure.

\medskip
In Table \ref{Table_pressure}, we compare the values of $P^*$ and $P^{**}$ obtained respectively in the experiment \cite{chau-12-et,chau-11-exp}, to those obtained with Rusanov's scheme and with the Relaxation scheme. We can see that the pressure values obtained with the barotropic three-phase flow model are slightly over estimated, a behavior that has already been observed in \cite{bou-18-rel} on the same test-case. 

\begin{table}[ht!]
\centering
\begin{tabular}{|c|ccc|}
\hline
				& Experiment	& Rusanov's scheme & Relaxation scheme	 \\
\hline
$P^* \ (\times 10^5\, Pa)$	& $\approx 2.4 $	& $2.78$	& $2.78$    \\
$P^{**} (\ \times 10^5\, Pa)$	& $\approx 5.0$		& $6.85$       	& $6.85$\\
\hline
\end{tabular}
\protect \parbox[t]{17cm}{\caption{Total mean pressure behind the right-going gas shock $P^*$ and behind the reflection of this shock wave on the right wall boundary $P^{**}$.\label{Table_pressure}}}
\end{table}



\section{Conclusion}
We have extended to the $N$-phase compressible model developed in \cite{her-16-cla} the relaxation finite volume scheme designed in \cite{coq-13-rob} for the barotropic Baer-Nunziato two phase flow model. The obtained scheme inherits the main properties of the scheme designed for the two phase framework. It applies to general barotropic equations of state (see Remark \ref{rem:any eos}). It is able to cope with arbitrarily small values of the statistical phase fractions. The approximated phase fractions and phase densities are proven to remain positive and a discrete energy inequality is proven under a classical CFL condition.

\medskip
For $N=3$, three test cases have been implemented which assess the good behaviour of the relaxation scheme. 
For the same level of refinement, the relaxation scheme is shown to be much more accurate than Rusanov's scheme, and for a given level of approximation error, the relaxation scheme is shown to perform much better in terms of computational cost than Rusanov's scheme.
Moreover, contrary to Rusanov's scheme which develops strong oscillations when approximating vanishing phase solutions, the numerical results show that the relaxation scheme remains stable in such regimes. Given that Rusanov's scheme is the only numerical scheme presently available for the considered three phase flow model, the present paper therefore constitutes an improvement in this area.

\medskip
Several natural extensions to this work can be considered. First of all, the scheme can be easily extended to the multidimensional framework. Indeed, the multidimensional version of the $N$-phase model (see \cite{bou-18-rel} for the multi-D three phase model) is invariant under Galilean transformation and under frame rotation. Thus, the one-dimensional relaxation Riemann solver can still be used to obtain a finite volume scheme on two and three dimensional unstructured grids. An update of the cell unknown is obtained through a convex combination of 1D updates associated with the cell faces. These 1D updates are computed with the relaxation 1D scheme by considering local 1D Riemann problems in the orthogonal direction to the grid faces. Thanks to the convex combination, the positivity and entropy properties of the scheme are still valid for the multidimensional scheme under a natural CFL condition. We refer to \cite{sal-12-ana} where this extension is detailed for the Baer-Nunziato two phase flow model, and where 2D-test cases have already been conducted. Another natural generalization is the extension of the scheme to higher order. A formally order two scheme can be obtained by considering a classical minmod reconstruction on the symmetrizing variable and a second order Runge-Kutta time scheme (see \cite{cro-13-app} for the two phase model). Such a procedure however does not ensure the preservation of the discrete energy inequality. Designing entropy satisfying second order numerical schemes is an open topic which is still under investigation. Finally, the extension of the relaxation numerical scheme to the multiphase flow model with non barotropic equations of state will be considered in a forthcoming paper. The key property of the relaxation scheme which allows this relatively simple extension is the existence of the discrete energy inequalities \eqref{ener_prop_1}-\eqref{ener_prop_k}. Thanks to these and to the second principle of thermodynamics which connects the phasic energies and the transported phasic entropies, one is able to extend the present Riemann solver to the full model with general \textit{e.o.s.} through minor adaptations. This work has already been done for the Baer-Nunziato two phase model in \cite{coq-17-pos}. The obtained scheme was shown to compare well with two of the most popular existing available schemes, namely Schwendeman-Wahle-Kapila's Godunov-type scheme \cite{SWK} and Tokareva-Toro's HLLC scheme \cite{TT}.

\appendix

\section{Eigenstructure of the relaxation system}
\label{app:sec:spectre:multi:relax}
\begin{prop}
\label{app:prop:spectre:multi:relax}
System \eqref{sys:multi:relax} is weakly hyperbolic on $\Omega_\W$ in the following sense. It admits the following $4N-1$ real eigenvalues: $\sigma_1(\W)=..=\sigma_{N-1}(\W)=u_1$, $\sigma_{N-1+k}(\W)=u_k-a_k \tau_k$, $\sigma_{2N-1+k}(\W)=u_k+a_k\tau_k$ and $\sigma_{3N-1+k}(\W)=u_k$ for $k=1,..,N$. All the characteristic fields are linearly degenerate and the corresponding right eigenvectors are linearly independent if, and only if,
\begin{equation}
\label{app:multi:hypfail}
\alpha_k \neq 0, \quad \forall k=1,..,N \qquad \text{and} \qquad |u_1-u_k| \neq a_k\tau_k, \quad \forall k=2,..,N.
\end{equation}
\end{prop}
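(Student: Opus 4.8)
The plan is to pass to phase-by-phase primitive variables, in which the quasilinear form of \eqref{sys:multi:relax} becomes block triangular, and then to read off both the spectrum and the rank condition from the diagonal blocks.

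First I would introduce the primitive unknowns grouped by phase: the triple $(\rho_1,u_1,\T_1)$ for phase~$1$ together with the quadruples $(\alpha_k,\rho_k,u_k,\T_k)$ for $k=2,..,N$ (the remaining fraction being recovered from $\sum_k\alpha_k=1$). Away from vacuum this is a smooth change of variables, so it preserves the eigenvalues and the linear (in)dependence of the eigenvectors. Rewriting \eqref{sys:multi:relax} in these variables uses the elementary manipulations already invoked in Proposition~\ref{prop:ener:multi:relax}: combining each phasic mass equation with the advection $\dv_t\alpha_k+u_1\dv_x\alpha_k=0$ and with the momentum equation, together with $\dv_x\pi_k=a_k^2\tau_k^2\,\dv_x\rho_k+(\PP_k'(\T_k)+a_k^2)\dv_x\T_k$. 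The structural facts I would record are: (i) each $\T_k$ is advected at $u_k$; (ii) for $k\ge2$ the non-conservative term $-\pi_k\dv_x\alpha_k$ cancels against $\dv_x(\alpha_k\pi_k)$ in the velocity equation, so that phase $k\ge2$ only involves spatial derivatives of its own variables $(\alpha_k,\rho_k,u_k,\T_k)$, with the density equation picking up the coefficient $\rho_k(u_k-u_1)/\alpha_k$ in front of $\dv_x\alpha_k$; and (iii) only the phase-$1$ velocity equation couples to the other phases, through the terms $\tfrac{1}{\alpha_1\rho_1}(\pi_l-\pi_1)\dv_x\alpha_l$, $l=2,..,N$.

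These facts make the matrix $B(\W)$ of the quasilinear system block upper triangular in the phase ordering (phase-$1$ variables $(\rho_1,u_1,\T_1)$ first, then the blocks $(\alpha_k,\rho_k,u_k,\T_k)$ for $k=2,..,N$): the only off-diagonal blocks sit in the phase-$1$ rows and act on the $\alpha_k$-columns. Hence $\det(B-\sigma I)$ factorizes into the characteristic polynomials of the diagonal blocks. The phase-$1$ block is $3\times3$ with eigenvalues $u_1$ and $u_1\pm a_1\tau_1$; each phase-$k$ block ($k\ge2$) is $4\times4$ and, expanding along the $\alpha_k$-row, has characteristic polynomial $(u_1-\sigma)(u_k-\sigma)\big[(u_k-\sigma)^2-a_k^2\tau_k^2\big]$, i.e. eigenvalues $u_1,\,u_k,\,u_k\pm a_k\tau_k$. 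This reproduces the announced $4N-1$ eigenvalues, with $u_1$ appearing once in every block — the advection of $\alpha_k$ at $u_1$ injecting $u_1$ into each phase-$k$ block. Linear degeneracy of all fields I would check on the eigenvectors: the fields carrying $u_1$ and $u_k$ are contact discontinuities along which the corresponding velocity is a Riemann invariant, so $\nabla\sigma\cdot r=0$; for the acoustic fields $u_k\pm a_k\tau_k$ linear degeneracy is precisely the Suliciu mechanism ($a_k$ constant and $\pi_k$ affine in $\tau_k$), as in \cite{coq-13-rob,bou-04-non}.

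The main obstacle, and the part deserving real care, is the equivalence between \eqref{app:multi:hypfail} and the linear independence of the right eigenvectors. Because $B$ is block triangular, it suffices to analyze each diagonal block and the compatibility of the shared eigenvalue $u_1$. I would show that each phase-$k$ block ($k\ge2$) is diagonalizable exactly when it admits a full set of eigenvectors: when $|u_1-u_k|\neq a_k\tau_k$ this holds, and if in addition $u_1=u_k$ the coupling coefficient $\rho_k(u_k-u_1)/\alpha_k$ vanishes and the block splits, so the coincidence $u_1=u_k$ is harmless; whereas at resonance $u_1=u_k\pm a_k\tau_k$ a direct computation of $\ker(\tilde B_k-u_1 I)$ shows that $u_1$ has geometric multiplicity one but algebraic multiplicity two, producing a Jordan block and a deficient eigenspace. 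The condition $\alpha_k\neq0$ enters twice: it keeps the coefficients $\rho_k(u_k-u_1)/\alpha_k$ and $(\pi_l-\pi_1)/(\alpha_1\rho_1)$ finite, and it makes the passage to primitive variables a genuine diffeomorphism. Finally, to conclude that the eigenvectors span $\R^{4N-1}$ when \eqref{app:multi:hypfail} holds, I would assemble the block eigenvectors and verify that the $u_1$-eigenspace attains the full dimension $N$: the phase-$k$ contributions are multiples of the $\alpha_k$-directions, and solvability of $(\tilde B_1-u_1 I)r_1=-\sum_k c_k\tfrac{\pi_k-\pi_1}{\alpha_1\rho_1}e_{u_1}$ follows from $e_{u_1}\in\mathrm{Range}(\tilde B_1-u_1 I)$, which holds because the phase-$1$ material field is a contact. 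The equivalence then reduces to the per-block resonance analysis, where \eqref{app:multi:hypfail} is seen to be both necessary and sufficient.
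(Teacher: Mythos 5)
Your proof is correct, and it shares the paper's broad strategy --- pass to primitive variables, exploit a block-triangular quasilinear form, read the spectrum off the diagonal blocks, and check linear degeneracy on the eigenvectors --- but the decomposition and the treatment of the rank condition are genuinely different. The paper orders the unknowns as $(\alpha_2,..,\alpha_N,\rho_1,u_1,\T_1,..,\rho_N,u_N,\T_N)$, so that all the fractions sit in a single $(N-1)\times(N-1)$ diagonal block $A=u_1 I$ and each phase contributes a $3\times 3$ acoustic block $C_k$, the coupling being carried by lower-left blocks $B_k$; it then writes the full $(4N-1)\times(4N-1)$ matrix of right eigenvectors $\mcal{R}(\w)$ in closed form and observes that its $\alpha$-diagonal block is ${\rm diag}(1-M_2^2,..,1-M_N^2)$ with $M_k=(u_k-u_1)/(a_k\tau_k)$, so invertibility fails exactly at $M_k^2=1$, i.e.\ at $|u_1-u_k|=a_k\tau_k$, while the $1/\alpha_k$ factors in the blocks $B_k$ and $B_k'$ account for the condition $\alpha_k\neq 0$. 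Your phase-by-phase grouping instead localizes each $\alpha_k$ inside a $4\times 4$ block, which turns the resonance $u_1=u_k\pm a_k\tau_k$ into a Jordan-block statement for one small matrix --- arguably more transparent for the necessity direction --- but it obliges you to add the assembly step for the shared eigenvalue $u_1$ (solvability of $(\tilde B_1-u_1I)r_1=-\sum_k c_k\tfrac{\pi_k-\pi_1}{\alpha_1\rho_1}e_{u_1}$, together with the observation that for $\lambda\neq u_1$ the $\alpha_k$-component of any block-$k$ eigenvector vanishes, so the off-diagonal coupling is invisible for all other eigenvalues), a step the paper sidesteps by exhibiting all eigenvectors at once. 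Both routes handle the benign coincidence $u_1=u_k$ identically, through the vanishing of the coefficient $\rho_k(u_k-u_1)/\alpha_k$. One small repair: your justification that $e_{u_1}\in\mathrm{Range}(\tilde B_1-u_1I)$ ``because the phase-1 material field is a contact'' is not the operative reason; the correct one is simply that the $(u_1,\rho_1)$ entry $a_1^2\tau_1^3$ of $\tilde B_1-u_1I$ is nonzero, so the $u_1$-direction lies in the column span. The conclusion and the overall argument stand.
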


\medskip
\begin{proof}
Denoting $\w=(\alpha_2,..,\alpha_N,\rho_1,u_1,\T_1,..,\rho_N,u_N,\T_N)^T$, the smooth solutions of system \eqref{sys:multi0} satisfy the following equivalent system:
\[
 \dv_t\w + \mathscr{A}(\w)\dv_x\w=0,
\]
where $\mathscr{A}(\w)$ is the block matrix:
\[
 \mathscr{A}(\w) = \begin{pmatrix}
  A 			&   \rvline   & \bigzero \\
  \hline
  \begin{matrix}
   B_1 \\
   \vdots \\
   B_N
  \end{matrix}
			&   \rvline   &  \begin{matrix}
			                  C_1  &      &         &   \\
			                       &      & \ddots  &    \\
			                       &      &         & C_N 
			                 \end{matrix}
 \end{pmatrix}.
\]
We write $\pi_k$ instead of $\pi_k(\tau_k,\T_k)$ in order to ease the notations.
Defining $M_k=(u_k-u_1)/(a_k\tau_k)$, for $k=2,..,N$, the matrices $A$, $B_1,..,B_N$ and $C_1,..,C_N$ are given as follows. $A$ is a $(N-1)\times(N-1)$ diagonal matrix with $u_1$ on the diagonal. $B_1,..,B_N$ are $3\times(N-1)$ matrices and $C_1,..,C_N$ are $3\times3 $ matrices defined by:
\[
\begin{aligned}
  &B_1=\left( \frac{1}{\alpha_1\rho_1} \sum_{k=2}^N (\pi_k-\pi_1)\delta_{i,2}\,\delta_{j+1,k} \right )_{\substack{1\leq i \leq 3 \\ 1 \leq j \leq N-1}},\\
 \quad
  &B_k = \left( \frac{\rho_k\,M_k\,a_k\,\tau_k}{\alpha_k} \delta_{i,1}\,\delta_{j+1,k} \right )_{\substack{1\leq i \leq 3 \\ 1 \leq j \leq N-1}},\quad \text{for $k=2,..,N$},
 \\
 &C_k = \begin{pmatrix}
        u_k & \rho_k & 0 \\
        a_k^2\tau_k^3 & u_k & (p_k'(\T_k)+a_k^2)\tau_k \\
        0 & 0 & u_k
       \end{pmatrix}, \quad \text{for $k=1,..,N$},
\end{aligned}
\]
where $\delta_{p,q}$ is the Kronecker symbol: for $p,q\in\N$, $\delta_{p,q}=1$ if $p=q$ and $\delta_{p,q}=0$ otherwise. Since $A$ is diagonal and $C_k$ is $\R$-diagonalizable with eigenvalues $u_k-a_k\tau_k$ and $u_k+a_k\tau_k$ and $u_k$, the matrix $\mathscr{A}(\w)$ admits the eigenvalues $u_1$ (with multiplicity $N$), $u_1-a_1\tau_1$, $u_1+a_1\tau_1$ and $u_k-a_k\tau_k$, $u_k+a_k\tau_k$ and $u_k$ for $k=2,..,N$. In addition, $\mathscr{A}(\w)$ is $\R$-diagonalizable provided that the corresponding right eigenvectors span $\R^{4N-1}$. The right eigenvectors are the columns of the following block matrix:
\[
 \mcal{R}(\w) = \begin{pmatrix}
  A' 			&   \rvline   & \bigzero \\
  \hline
  \begin{matrix}
   B_1' \\
   \vdots \\
   B_N'
  \end{matrix}
			&   \rvline   &  \begin{matrix}
			                  C_1'  &      &         &   \\
			                       &      & \ddots  &    \\
			                       &      &         & C_N'
			                 \end{matrix}
 \end{pmatrix},
\]
where $A'$ is a $(N-1)\times(N-1)$ diagonal matrix defined by $A'={\rm diag}(1-M_2^2,..,1-M_N^2)$. $B_1',..,B_N'$ are $3\times(N-1)$ matrices and $C_1',..,C_N'$ are $3\times3 $ matrices defined by:
\[
\begin{aligned}
& B_1'=\left( -\frac{1}{\alpha_1 a_1^2\tau_1^2} \sum_{k=2}^N (\pi_k-\pi_1)(1-M_k^2)\delta_{i,1}\,\delta_{j+1,k} \right )_{\substack{1\leq i \leq 3 \\ 1 \leq j \leq N-1}},
\\
& B_k' = \left( \Big (\frac{M_k^2 \rho_k}{\alpha_k} \delta_{i,1}-\frac{M_k a_k \tau_k}{\alpha_k}\delta_{i,2} \Big) \,\delta_{j+1,k} \right )_{\substack{1\leq i \leq 3 \\ 1 \leq j \leq N-1}}, \quad \text{for $k=2,..,N$},
\\
& C_k = \begin{pmatrix}
       \rho_k & \rho_k  & 0\\
       -a_k\tau_k   & a_k\tau_k & 0 \\
       0 & 0 & 1
      \end{pmatrix}, \quad \text{for $k=1,..,N$}.
\end{aligned}
\]
The first $N-1$ columns are the eigenvectors associated with the eigenvalue $u_1$. For $k=1,..,N$, the $\big(N+2(k-1)\big)$-th, $\big(N+(2k-1)\big)$-th and $\big(N+2k\big)$-th columns are the eigenvectors associated with $u_k-a_k\tau_k$, $u_k+a_k\tau_k$ and $u_k$ respectively. We can see that $\mcal{R}(\w)$ is invertible if and only if $M_k\neq1$ for all $k=2,..,N$ \emph{i.e.} if and only if $|u_k-u_1|\neq a_k\tau_k$ for all $k=2,..,N$. In particular, if for some $k=2,..,N$, one has $u_k=u_1$, $\mcal{R}(\w)$ is still invertible.
Denote $(\r_j(\w))_{1\leq j \leq 4N-1}$ the columns of $\mcal{R}(\w)$. If $1\leq j \leq N-1$, we can see that the $(N+1)$-th component of $\r_j(\w)$ is zero. This implies that for all  $1\leq j \leq N-1$ $\r_j(\w) \cdot \gradi_{\w}(u_1)=0$. Hence, the field associated with the eigenvalue $u_1$ is linearly degenerated. Now we observe that for all $k=1,..,N$:
\[
 \begin{aligned}
  \r_{N+2(k-1)}(\w) \cdot \gradi_{\w} (u_k-a_k \tau_k) =  0,\\[2ex] 
  \r_{N+(2k-1)}(\w) \cdot \gradi_{\w} (u_k+a_k \tau_k) =  0,\\[2ex]
  \r_{N+2k)}(\w) \cdot \gradi_{\w} (u_k) =  0.
 \end{aligned}
\]
This means that all the other fields are also linearly degenerated.
\end{proof}

\section{Practical computation of the numerical fluxes}
\label{sec:flux}

\subsection{Computation of the relaxation parameters $(a_k)_{k=1,..,N}$}
\label{sec:choixak}
In the numerical scheme \eqref{fvscheme}, at each interface $x_{j+\frac 12}$, one must determine the relaxation parameters $(a_k)_{k=1,..,N}$ in order to compute the Riemann solution $\W_{\rm Riem}(\xi;\W_L,\W_R)$ where $\W_L=\mathscr{M}(\U_j^n)$ and $\W_R=\mathscr{M}(\U_{j+1}^n)$.
The parameters $(a_k)_{k=1,..,N}$, must be chosen large enough so as to satisfy several requirements:
\begin{itemize}
 \item In order to ensure the stability of the relaxation approximation, $a_k$ must satisfy Whitham's condition (\ref{whithambis}). For simplicity however, we do not impose Whitham's condition everywhere in the solution of the Riemann problem \eqref{sys:multi:relax}-\eqref{sys:multi:relax:CI} (which is possible however), but only for the left and right initial data at each interface:
 \begin{equation}
  \text{for $k=1,..,N$}, \quad a_k > \max \left (\rho_{k,L}\,c_{k}(\rho_{k,L}),\rho_{k,R}\,c_{k}(\rho_{k,L}) \right ),
 \end{equation}
 where $c_k(\rho_k)$ is the speed of sound in phase $k$. In practice, no instability was observed during the numerical simulations due to this simpler Whitham-like condition.
 
\item In order to compute the solution of the relaxation Riemann problem, the specific volumes $\tdd_{k,L}$ and $\tdd_{k,R}$ defined in \eqref{diese} must be positive. The expressions of $\tdd_{k,L}$ and $\tdd_{k,R}$
are second order polynomials in $a_k^{-1}$ whose constant terms are respectively $\tau_{k,L}$ and $\tau_{k,R}$. Hence, by taking $a_k$ large enough, one can guarantee that $\tdd_{k,L}>0$ and $\tdd_{k,R}>0$, since the initial specific volumes $\tau_{k,L}$ and $\tau_{k,R}$ are positive. 

\item Finally, in order for the relaxation Riemann problem \eqref{sys:multi:relax}-\eqref{sys:multi:relax:CI} to have a positive solution, the parameters $(a_k)_{k=1,..,N}$ must be chosen so as to meet condition \eqref{TheCondition} of Theorem \ref{multi:THEtheorem}. As explained in Remark \ref{rem:akgrand}, assumption \eqref{TheCondition} is always satisfied if the parameters $(a_k)_{k=1,..,N}$ are taken large enough.
\end{itemize}
Thereafter, we propose an algorithm for the computation of the parameters $(a_k)_{k=1,..,N}$ at each interface.
We begin with choosing $\eta$ a (small) parameter in the interval $(0,1)$. In our numerical computations, we took $\eta=0.01$. 

\bigskip

\hspace{1.5cm}
\vrule
\begin{minipage}{20cm}
 \begin{itemize} \itemsep2pt \parskip0pt \parsep0pt
 \item For $k=1,..,N$ initialize $a_{k}$:
\begin{description}
\item \qquad $a_{k} := (1+\eta) \max \left( \rho_{k,L}\, c_{k}(\rho_{k,L}), \rho_{k,R}\, c_{k}(\rho_{k,R})  \right )$.
\end{description}
\item For $k=1,..,N$:
\begin{description}
\item \qquad do $\lbrace a_{k} := (1+\eta)a_{k} \rbrace$  while $ \big ( \tdd_{k,L} \leq 0$ or $\tdd_{k,R} \leq 0 \big )$.
\end{description}
\item Do $\lbrace \text{for k=1,..,N, } a_{k} := (1+\eta)a_{k} \rbrace $ while \big (not \eqref{TheCondition}\big).
\end{itemize}
\end{minipage}

\bigskip
This algorithm always converges in the sense that there is no infinite looping due to the while-conditions. Moreover, this algorithm provides reasonable values of $(a_k)_{k=1,..,N}$, since in all the numerical simulations, the time step obtained through the CFL condition (\ref{cfl}) remains reasonably large and does not go to zero.
In fact, the obtained values of $(a_k)_{k=1,..,N}$ are quite satisfying since the relaxation scheme compares favorably with Rusanov's scheme, in terms of numerical diffusion and CPU-time performances (see Figure \ref{Figcase1ter}).

\subsection{Construction of the solution to the Riemann problem \eqref{sys:multi:relax}-\eqref{sys:multi:relax:CI}}
\label{constr_sol}

Now, given $(\W_L,\W_R)\in\Omega_{\W}$ and $(a_k)_{k=1,..,N}$ such that the conditions of Theorem \ref{multi:THEtheorem} are met, we give the expression of the piecewise constant solution of the Riemann problem  \eqref{sys:multi:relax}-\eqref{sys:multi:relax:CI}:
\[
\xi \mapsto \W_{\rm Riem}(\xi;\W_{L},\W_{R}).
\]
We recall the following notations built on the initial states $(\vect{W}_L,\vect{W}_R)$ and on the relaxation parameters $(a_k)_{k=1,..,N}$, which are useful for the computation of the solution.
For $k=1,..,N$: 
\begin{equation}
\label{diese1}
\begin{aligned}
\udd_k &= \dfrac{1}{2} \left (u_{k,L}+u_{k,R} \right )-\dfrac{1}{2a_k} \left (\pi_k(\tau_{k,R},\T_{k,R}) - \pi_k(\tau_{k,L},\T_{k,L}) \right ),
\\
\pidd_k &= \dfrac{1}{2} \left ( \pi_k(\tau_{k,R},\T_{k,R}) + \pi_k(\tau_{k,L},\T_{k,L}) \right )- \dfrac{a_k}{2} \left (u_{k,R}- u_{k,L} \right ),
\\
\tdd_{k,L} &= \tau_{k,L} + \dfrac{1}{a_k}(\udd_k - u_{k,L}), 
\\
\tdd_{k,R} &= \tau_{k,R} - \dfrac{1}{a_k}(\udd_k - u_{k,R}).
\end{aligned}
\end{equation} 

The relaxation Riemann solution $\W_{\rm Riem}(\xi;\W_{L},\W_{R})$ is determined through the following steps:
\begin{enumerate}
\item Define the function $\theta_1$ by:
\[
\theta_1(u) =  a_1\paren{\alpha_{1,L}+\alpha_{1,R}} (u -  \udd_1). 
\]
\item Define for $(\nu,\omega)\in\R_+^{*}\times\R_+^*$ the two-variable function:
\[
\M_0(\nu,\omega) = \dfrac{1}{2} \left( \dfrac{1+\omega^2}{1-\omega^2} \left(
1+
\dfrac{1}{\nu} \right ) - \sqrt{\left(
\dfrac{1+\omega^2}{1-\omega^2}\right )^2 \left( 1+ \dfrac{1}{\nu}
\right )^2 -\dfrac{4}{\nu} }\right ), 
\]
which can be extended by continuity to $\omega=1$ by setting $\M_0(\nu,1)=0$.
\item For $k=2,..,N$ define the function $\theta_k$ by:
\begin{multline*}
\theta_k(u) =  a_k\paren{\alpha_{k,L}+\alpha_{k,R}} (u -  \udd_k) \\
+2a_k^2
\left \lbrace
\begin{array}{ll}
\alpha_{k,L} \, \tdd_{k,L} \, \M_0\paren{\frac{\alpha_{k,L}}{\alpha_{k,R}},\frac{1-\Me_k}{1+\Me_k}}, & \quad \text{with \ $\Me_k=\frac{\udd_k-u}{a_k\tdd_{k,L}}$  \ if \ $\udd_k\geq u$},\\[3ex] 
\alpha_{k,R} \, \tdd_{k,R} \, \M_0\paren{\frac{\alpha_{k,R}}{\alpha_{k,L}},\frac{1-\Me_k}{1+\Me_k}}, & \quad \text{with \ $\Me_k=\frac{\udd_k-u}{a_k\tdd_{k,R}}$ \ if \ $\udd_k\leq u$}.
\end{array}
\right.
\end{multline*}
\item Define the function $\Theta$ by:
\[
 \Theta(u) = \theta_1(u)+...+\theta_N(u).
\]
\item Assuming \eqref{TheCondition}, use an iterative method (\textit{e.g.} Newton's method or a dichotomy (bisection) method) to compute the unique $u_1^*$ in the interval $\Big(\max\limits_{k=1,..,N} \left \lbrace u_{k,L}-a_{k}\tau_{k,L}\right \rbrace,\min\limits_{k=1,..,N} \left \lbrace u_{k,R}+a_{k}\tau_{k,R}\right \rbrace \Big)$ that satisfies:
\begin{equation}
\label{fp:app}
\Theta(u_1^*)=\sum_{k=2}^N (\pidd_1-\pidd_k) \Delta \alpha_k. 
\end{equation}
\item The intermediate states for phase 1 are given by:
\begin{center}
\begin{tikzpicture}[scale=3]
\small
\tikzstyle{axes}=[thin,>=latex]
\begin{scope}[axes]
        \draw[->] (-1,0)--(1,0) node[right=3pt] {$x$};
        \draw[->] (0,0)--(0,1) node[left=3pt] {$t$};
	\draw [very thick,color=red] (0,0) -- (30:1cm) node [color=black, above=1pt] {$\qquad u_{1,R}+a_1\tau_{1,R}$};
        \draw [very thick,color=red] (0,0) -- (80:0.8cm) node [color=black, above=2pt] {$u_1^*$};
	\draw [very thick,color=red] (0,0) -- (150:1cm) node [color=black, above=1pt] {$u_{1,L}-a_1\tau_{1,L}$};
	\draw (-0.8,0.2) node[blue] {$\substack{\alpha_{1,L}\\ \tau_{1,L} \\ u_{1,L} \\ \T_{1,L}}$};
	\draw (-0.25,0.6) node[blue] {$\substack{\alpha_{1}^-\\ \tau_{1}^- \\ u_{1}^- \\ \T_{1}^-}$};
	\draw (0.3,0.6) node[blue] {$\substack{\alpha_{1}^+\\ \tau_{1}^+ \\ u_{1}^+ \\ \T_{1}^+}$};
	\draw (0.7,0.15) node[blue] {$\substack{\alpha_{1,R}\\ \tau_{1,R} \\ u_{1,R} \\ \T_{1,R}}$};
\end{scope}
\normalsize
\end{tikzpicture}
\end{center}
where:
\[
\begin{array}{llll}
\alpha_{1}^- = \alpha_{1,L},
&\quad \tau_1^- = \tdd_{1,L} +\frac{1}{a_1} (u_1^*-\udd_1),
&\quad u_1^+=u_1^*,
&\quad\T_1^- = \T_{1,L}, \\[2ex]
\alpha_{1}^+ = \alpha_{1,R},
&\quad \tau_1^+ = \tdd_{1,R} -\frac{1}{a_1} (u_1^*-\udd_1),
&\quad u_1^+=u_1^*,
&\quad \T_1^+ = \T_{1,R}.
\end{array}
\]
\item There are three possibilities for the wave configuration of phase $k$ depending on the sign of $\udd_k-u_1^*$:
\begin{itemize}
 \item If $\udd_k>u_1^*$, defining $\nu_k=\alpha_{k,L}/\alpha_{k,R}$, $\Me_k=(\udd_k-u_1^*)/(a_k\tdd_{k,L})$ and $\M_k=\M_0\paren{\nu_k,\frac{1-\Me_k}{1+\Me_k}}$, the intermediate states for phase $k$ are given by:
\begin{center}
\begin{tikzpicture}[scale=3]
\small
\tikzstyle{axes}=[thin,>=latex]
\begin{scope}[axes]
	\draw[->] (-1,0)--(1,0) node[right=3pt] {$x$};
        \draw[->] (0,0)--(0,1) node[left=3pt] {$t$};
	\draw [very thick,color=red] (0,0) -- (30:1cm) node[color=black,above] {$u_{k,R}+a_k\tau_{k,R}$};
        \draw [very thick,color=red] (0,0) -- (80:1cm) node[color=black,right] {$u_k^*=u_k^-=u_k^+$};
	\draw [dashed,very thick,color=red] (0,0) -- (115:1cm) node[color=black,above] {$u_1^*$};
	\draw [very thick,color=red] (0,0) -- (155:1cm) node[color=black,above] {$u_{k,L}-a_k\tau_{k,L}$};
	\draw (-0.8,0.15) node[blue] {$\substack{\alpha_{k,L}\\ \tau_{k,L} \\ u_{k,L} \\ \T_{k,L}}$};
	\draw (-0.4,0.5) node[blue] {$\substack{\alpha_{k}^-\\ \tau_{k}^- \\ u_{k}^- \\ \T_{k}^-}$};
	\draw (-0.1,0.6) node[blue] {$\substack{\alpha_{k}^+\\ \tau_{k}^+ \\ u_{k}^+ \\ \T_{k}^+}$};
	\draw (0.3,0.5) node[blue] {$\substack{\alpha_{k,R*}\\ \tau_{k,R*} \\ u_{k,R*} \\ \T_{k,R*}}$};
	\draw (0.7,0.15) node[blue] {$\substack{\alpha_{k,R}\\ \tau_{k,R} \\ u_{k,R} \\ \T_{k,R}}$};
	\draw (0,-0.2) node[] {Wave ordering $u_k>u_1$};%
\end{scope}
\normalsize
\end{tikzpicture}
\end{center}
where:
\[
\begin{array}{llll}
\alpha_k^-=\alpha_{k,L},
&\tau_k^- = \tdd_{k,L} \dfrac{ 1 - \Me_k}{1 -\M_k},   
&u_k^- = u_1^*+ a_k\M_k \tau_k^-,
&\T_k^- = \T_{k,L},  \\[2ex]
\alpha_k^+=\alpha_{k,R},
&\tau_k^+ =\tdd_{k,L} \dfrac{ 1 + \Me_k }{1 + \nu_k \M_k},   
&u_k^+ = u_1^*+ \nu_k a_k \M_k \tau_k^+, 
&\T_k^+ = \T_{k,L},  \\[2ex]
\alpha_{k,R*}=\alpha_{k,R},
&\tau_{k,R*} = \tdd_{k,R} + \tdd_{k,L} \dfrac{\Me_k - \nu_k \M_k}{1 + \nu_k \M_k},
&u_{k,R*} = u_1^*+ \nu_k a_k \M_k \tau_k^+,  
&\T_{k,R*} =\T_{k,R}. 
\end{array}
\]
 \item If $\udd_k<u_1^*$, defining $\nu_k=\alpha_{k,R}/\alpha_{k,L}$, $\Me_k=-(\udd_k-u_1^*)/(a_k\tdd_{k,R})$ and $\M_k=\M_0\paren{\nu_k,\frac{1-\Me_k}{1+\Me_k}}$, the intermediate states for phase $k$ are given by:
\begin{center}
\begin{tikzpicture}[scale=3]
\small
\tikzstyle{axes}=[thin,>=latex]
\begin{scope}[axes]
	\draw[->] (-1,0)--(1,0) node[right=3pt] {$x$};
        \draw[->] (0,0)--(0,1) node[above=3pt] {$t$};
	\draw [very thick,color=red] (0,0) -- (30:1cm) node[color=black,above] {$u_{k,R}+a_k\tau_{k,R}$};
        \draw [very thick,color=red] (0,0) -- (80:1cm) node[color=black,above] {$u_1^*$};
	\draw [dashed,very thick,color=red] (0,0) -- (115:1cm) node[color=black,above] {$u_k^*=u_k^-=u_k^+$};
	\draw [very thick,color=red] (0,0) -- (155:1cm) node[color=black,above] {$u_{k,L}-a_k\tau_{k,L}$};
	\draw (-0.8,0.15) node[blue] {$\substack{\alpha_{k,L}\\ \tau_{k,L} \\ u_{k,L} \\ \T_{k,L}}$};
	\draw (-0.4,0.5) node[blue] {$\substack{\alpha_{k,L*}\\ \tau_{k,L*} \\ u_{k,L*} \\ \T_{k,L*}}$};
	\draw (-0.1,0.6) node[blue] {$\substack{\alpha_{k}^-\\ \tau_{k}^- \\ u_{k}^- \\ \T_{k}^-}$};
	\draw (0.3,0.5) node[blue] {$\substack{\alpha_{k}^+\\ \tau_{k}^+ \\ u_{k}^+ \\ \T_{k}^+}$};
	\draw (0.7,0.15) node[blue] {$\substack{\alpha_{k,R}\\ \tau_{k,R} \\ u_{k,R} \\ \T_{k,R}}$};
	\draw (0,-0.2) node[] {Wave ordering $u_k<u_1$};%
\end{scope}
\normalsize
\end{tikzpicture}
\end{center}
where:
\[
\begin{array}{llll}
\alpha_{k}^+ = \alpha_{k,R}, 
&\tau_k^+ = \tdd_{k,R} \dfrac{ 1 - \Me_k}{1 -\M_k},   
&u_k^+ =   u_1^*- a_k\M_k \tau_k^+, 
&\T_k^+ =  \T_{k,R},  \\[2ex]
\alpha_{k}^- = \alpha_{k,L}, 
&\tau_k^- =\tdd_{k,R} \dfrac{ 1 + \Me_k }{1 + \nu_k \M_k},   
&u_k^- =  u_1^*- \nu_k a_k \M_k \tau_k^-, 
&\T_k^- =  \T_{k,R},  \\[2ex]
\alpha_{k,L*} = \alpha_{k,L}, 
&\tau_{k,L*} = \tdd_{k,L} + \tdd_{k,R} \dfrac{\Me_k - \nu_k \M_k}{1 + \nu_k \M_k},
&u_{k,L*} =  u_1^*- \nu_k a_k \M_k \tau_k^-,  
&\T_{k,L*} =  \T_{k,L}. 
\end{array}
\]
\item If $\udd_k=u_1^*$, the intermediate states for phase $k$ are given by:
\begin{center}
\begin{tikzpicture}[scale=3]
\small
\tikzstyle{axes}=[thin,>=latex]
\begin{scope}[axes]
	\draw[->] (-1,0)--(1,0) node[right=3pt] {$x$};
        \draw[->] (0,0)--(0,1) node[left=3pt] {$t$};
	\draw [very thick,color=red] (0,0) -- (30:1cm) node[color=black,above] {$u_{k,R}+a_k\tau_{k,R}$};
	\draw [dashed,very thick,color=red] (0,0) -- (115:1cm) node[color=black,above] {$u_1^*=u_k^*$};
	\draw [very thick,color=red] (0,0) -- (155:1cm) node[color=black,above] {$u_{k,L}-a_k\tau_{k,L}$};
	\draw (-0.8,0.15) node[blue] {$\substack{\alpha_{k,L}\\ \tau_{k,L} \\ u_{k,L} \\ \T_{k,L}}$};
	\draw (-0.4,0.5) node[blue] {$\substack{\alpha_{k}^-\\ \tau_{k}^- \\ u_{k}^- \\ \T_{k}^-}$};
	\draw (+0.1,0.6) node[blue] {$\substack{\alpha_{k}^+\\ \tau_{k}^+ \\ u_{k}^+ \\ \T_{k}^+}$};
	\draw (0.7,0.15) node[blue] {$\substack{\alpha_{k,R}\\ \tau_{k,R} \\ u_{k,R} \\ \T_{k,R}}$};
	\draw (0,-0.2) node[] {Wave ordering $u_k=u_1$};%
\end{scope}
\normalsize
\end{tikzpicture}
\end{center}
where:
\[
\begin{array}{llll}
\alpha_k^-=\alpha_{k,L},
&\tau_k^- = \tdd_{k,L},
&u_k^- = u_k^*=\udd_k
&\T_k^- = \T_{k,L},  \\[2ex]
\alpha_k^+=\alpha_{k,R},
&\tau_k^+ =\tdd_{k,R},   
&u_k^+ = u_k^*=\udd_k,
&\T_k^+ = \T_{k,R}. 
\end{array}
\]
\end{itemize}
\end{enumerate}


\subsection{Calculation of the numerical fluxes} 

Given the solution $\xi \mapsto \W_{\rm Riem}(\xi;\W_L,\W_R)$ where $\W_L=\mathscr{M}(\U_L)$ and $\W_R=\mathscr{M}(\U_R)$ of the relaxation Riemann problem, the numerical fluxes are computed as follows:
\begin{equation*}
\mathbf{F}^{\pm}(\vect{U}_{L},\vect{U}_{R})=\left [
\begin{matrix}
  (u_1^*)^{\pm} \Delta\alpha_{1}\\
 \vdots \\
 (u_1^*)^{\pm} \Delta\alpha_{N-1} \\
 \alpha_1 \rho_1 u_1 \\
 \vdots \\
 \alpha_N \rho_N u_N\\[1ex]
 \alpha_1\rho_1 u_1^2 + \alpha_1 \pi_1+\frac{(u_1^*)^\pm}{u_1^*}\, \sum_{l=2}^N\pi_l^*\Delta\alpha_l\\[1ex]
 \alpha_1\rho_1 u_1^2 + \alpha_1 \pi_1 - \frac{(u_1^*)^\pm}{u_1^*}\,\pi_2^*\Delta\alpha_2\\
  \vdots \\
 \alpha_N\rho_N u_N^2 + \alpha_N \pi_N-\frac{(u_1^*)^\pm}{u_1^*} \, \pi_N^*\Delta\alpha_N
\end{matrix}  \right ]\Big (\W_{\rm Riem} \left (0^\pm;\mathscr{M}(\vect{U}_L),\mathscr{M}(\vect{U}_R)\right) \Big ),
\end{equation*}
where $u_1^*$ is the solution of the fixed-point problem \eqref{fp:app} and for $k=2,..,N$, $\pi_k^* \Delta \alpha_k =  \pidd_k \Delta \alpha_k + \theta_k(u_1^*)$ with $\pidd_k$ defined in \eqref{diese1} and $\alpha_k=\alpha_{k,R}-\alpha_{k,L}$.
In the above expression of the numerical fluxes, we have denoted $(u_1^*)^+=\max(u_1^*,0)$, $(u_1^*)^-=\min(u_1^*,0)$ and the functions $x\mapsto \frac{(x)^\pm}{x}$ are extended by $0$ at $x=0$.

\section*{Acknowledgements} The author would like to thank Jean-Marc H\'erard for the productive discussions and his thorough reading of this paper which has constantly led to improving the text. 

\small
\bibliographystyle{plain}
\bibliography{bibfile}
\normalsize

\end{document}